\documentclass[11pt,reqno]{amsart} 

\usepackage{amssymb,enumitem,verbatim,mathtools,stmaryrd}
\usepackage[bookmarksdepth=3]{hyperref}
\usepackage[margin=1in]{geometry}
\usepackage[all,cmtip]{xy}
\usepackage[sort,noadjust]{cite}

\usepackage{dynkin-diagrams}

\usepackage[color=yellow]{todonotes}
\usepackage{multicol}

\newcommand{\subgrp}[1]{\langle #1 \rangle}

\newcommand{\set}[1]{\left\{ #1 \right\}}
\newcommand{\smallset}[1]{\{ #1 \}}

\newcommand{\abs}[1]{| #1 |}

\newcommand{\ol}[1]{\overline{#1}}
\newcommand{\ul}[1]{\underline{#1}}

\DeclareMathOperator{\End}{End}
\DeclareMathOperator{\Grp}{Grp}
\DeclareMathOperator{\Hom}{Hom}
\DeclareMathOperator{\id}{id}
\DeclareMathOperator{\Id}{id}
\DeclareMathOperator{\im}{im}
\DeclareMathOperator{\Ind}{Ind}

\DeclareMathOperator{\Irr}{Irr}
\DeclareMathOperator{\Lie}{Lie}
\DeclareMathOperator{\refl}{refl}

\DeclareMathOperator{\Res}{Res}
\DeclareMathOperator{\rk}{rk}

\DeclareMathOperator{\Span}{span}
\DeclareMathOperator{\tr}{tr}
\DeclareMathOperator{\triv}{triv}

\newcommand{\zero}{\ol{0}}
\newcommand{\one}{\ol{1}}

\newcommand{\ve}{\varepsilon}

\newcommand{\C}{\mathbb{C}}

\newcommand{\Q}{\mathbb{Q}}
\newcommand{\R}{\mathbb{R}}

\newcommand{\bbW}{\mathbb{W}}
\newcommand{\Z}{\mathbb{Z}}

\newcommand{\CB}{\C \calB}
\newcommand{\CD}{\C D}
\newcommand{\CG}{\C G}
\newcommand{\CH}{\C H}

\newcommand{\CS}{\C \fS}

\newcommand{\CSrm}{\C S}

\newcommand{\RG}{\R G}

\newcommand{\calB}{\mathcal{B}}
\newcommand{\calC}{\mathcal{C}}

\newcommand{\calE}{\mathcal{E}}
\newcommand{\calF}{\mathcal{F}}

\newcommand{\fb}{\mathfrak{b}}
\newcommand{\fD}{\mathfrak{D}}

\newcommand{\fg}{\mathfrak{g}}
\newcommand{\g}{\mathfrak{g}}
\newcommand{\gl}{\mathfrak{gl}}
\newcommand{\fh}{\mathfrak{h}}
\newcommand{\osp}{\mathfrak{osp}}

\newcommand{\fs}{\mathfrak{s}}
\newcommand{\fS}{\mathfrak{S}}
\newcommand{\fsl}{\mathfrak{sl}}
\newcommand{\fso}{\mathfrak{so}}
\newcommand{\fsp}{\mathfrak{sp}}
\newcommand{\fq}{\mathfrak{q}}
\newcommand{\sq}{\mathfrak{sq}}

\newcommand{\glmn}{\gl(m|n)}

\newcommand{\gone}{\g_{\one}}
\newcommand{\gzero}{\g_{\zero}}
\newcommand{\Gone}{G_{\one}}
\newcommand{\Gzero}{G_{\zero}}
\newcommand{\Hone}{H_{\one}}
\newcommand{\Hzero}{H_{\zero}}
\newcommand{\hzero}{\fh_{\zero}}

\newcommand{\Vzero}{V_{\zero}}

\newcommand{\CGzero}{\CG_{\zero}}
\newcommand{\CHzero}{\CH_{\zero}}

\numberwithin{equation}{subsection}
\counterwithin{figure}{section}
\counterwithin{table}{section}

\newtheorem{theorem}{Theorem}[subsection]
\newtheorem{proposition}[theorem]{Proposition}

\newtheorem{corollary}[theorem]{Corollary}
\newtheorem{lemma}[theorem]{Lemma}

\theoremstyle{definition}

\newtheorem{convention}[theorem]{Convention}

\newtheorem{remark}[theorem]{Remark}

\newtheorem{convention-construction}[theorem]{Convention/Construction}
\newtheorem{remark-convention}[theorem]{Remark/Convention}

\title[Lie (super)algebras generated by reflections]{Lie (super)algebras generated by reflections in finite reflection groups of exceptional type}

\author{Christopher M.\ Drupieski}
\address{Department of Mathematical Sciences,
		DePaul University,
		Chicago, IL 60614, USA}
\email{c.drupieski@depaul.edu}

\author{Jonathan R.\  Kujawa}
\address{Department of Mathematics \\
		Oregon State University \\
		Corvallis, OR 97331, USA}
\email{kujawaj@oregonstate.edu}

\thanks{JRK was supported by a Simons Foundation Travel Support for Mathematicians Grant (SFI-MPS-TSM-00014272).}

\date{\today}

\subjclass[2020]{Primary 17B10. Secondary 20F55.}

\allowdisplaybreaks

\begin{document}

\begin{abstract}  Using a mixture of theoretical tools and computer calculations, we describe the Lie subalgebras and the Lie sub-superalgebras generated by the sets of reflections in the group algebras of the exceptional finite reflection groups.
\end{abstract}

\keywords{}


\maketitle

\section{Introduction}

\subsection{Background}

Let $G$ be a finite reflection group, and let $\CG$ be its complex group algebra. The commutator bracket $[x,y] = xy - yx$ endows $\CG$ with the structure of a complex Lie algebra, which we denote $\Lie(\CG)$. The classical theorems of Maschke and of Artin and Wedderburn immediately imply that $\Lie(\CG)$ is a direct sum of general linear Lie algebras, one for each isomorphism class of simple $\CG$-modules. A more subtle and interesting problem is to determine the structure of the Lie subalgebra $\fs \subseteq \Lie(\CG)$ generated by the set $S$ of reflections in $G$. In \cite{Marin:2007}, Marin considered the case $G = \fS_n$ of the symmetric group, calculating the structure of the Lie subalgebra $\fs \subseteq \Lie(\CS_n)$ generated by the set of transpositions in $\fS_n$. In that paper and its sequels, Marin and others used the structure of $\fs$ to obtain new results on the representation theory of braid groups and Iwahori--Hecke algebras in type A.

By declaring the reflections in $G$ to be of odd super\-degree, one can also endow $G$ with the structure of a super\-group, i.e., a group equipped with a multiplicative grading by $\Z_{2} = \Z /2\Z$. Denote the $\Z_2$-degree of a homogeneous element $x$ by $\ol{x}$. The complex group algebra $\CG$ is then endowed in turn with an associative super\-algebra structure, and the graded commutator
	\[
	[x,y] = xy - (-1)^{\ol{x} \cdot \ol{y}}yx
	\]
defines a Lie super\-algebra structure on $\CG$, which we again denote $\Lie(\CG)$. One can then ask for an explicit description of the Lie superalgebra $\Lie(\CG)$, and for a description of the Lie sub-super\-algebra $\g$ generated by the set of reflections in $G$.

In \cite{DK:2025}, we determined for $G = \fS_n$ the structure of the Lie superalgebra $\Lie(\CG)$, and the structure of the Lie sub-superalgebra generated by the set of reflections in $G$. Then in subsequent work \cite{DK:2026a,DK:2026b}, we determined the structure of the corresponding Lie (super)algebras for the finite reflection groups of types BC and D. While the overall approach we developed for these remaining classical types was uniform, and simplified and streamlined (and subsumed) the work from type A, some modest case-by-case verifications were still required.

\subsection{Main results}

Given these results, it is natural to ask what happens for the exceptional finite reflection groups, i.e., for the irreducible reflection groups of Coxeter types $I_{2}(m)$ ($m\geq 3$), $H_{3}$, $H_{4}$, $F_{4}$, $E_{6}$, $E_{7}$, and $E_{8}$. In each case, describing $\Lie (\C G)$ remains straightforward, and the main (hard) problem is to describe the Lie subalgebra $\fs$ and Lie sub-superalgebra $\fg$ generated by the set $S$ of reflections in $G$. The answers vary in the details, but the overall shape of the results is similar across cases. We outline this shape below.

First, let $\CSrm$ be the vector subspace of $\CG$ spanned by the set $S$, and let $(\CSrm)^G$ be subspace of elements that are invariant under conjugation by $G$. Then $(\CSrm)^G$ admits a basis given by the conjugacy class sums of elements of $S$. In particular, if $G$ is an irreducible finite reflection group, then $(\CSrm)^G$ is at most two-dimensional.

For the Lie subalgebra $\fs \subseteq \Lie(\CG)$ generated by the set of reflections in $G$, one gets that $\fs$ is a reductive complex Lie algebra, its derived subalgebra $\fD(\fs)$ is a semisimple complex Lie algebra, and its center $Z(\fs)$ is equal to $(\CSrm)^G$. Thus
	\[
	\fs = \fD (\fs) \oplus (\CSrm)^G,
	\]
and it suffices to describe $\fD (\fs)$. Let $V_{\refl}$ denotes the reflection representation of $G$ and let $\varepsilon$ denote the sign representation of $G$, i.e., the one-dimensional representation that satisfies $\varepsilon(s)=-1$ for all $s \in S$. Our first collection of main results gives explicit sets of simple $\CG$-modules, $\mathtt{E}=\mathtt{E}(G)$ and $\mathtt{F}=\mathtt{F}(G)$, such that\footnote{In type $H_{4}$, an additional summand $\fsl({}^{\sigma}V_{\refl})$ must also be included.}
	\begin{equation} \label{eq:D(s)-general-iso}
	\fD(\fs) \cong \fsl(V_{\refl}) \oplus \Big[ \bigoplus_{V \in \mathtt{E}/\sim} \fsl(V) \Big] \oplus \Big[ \bigoplus_{V \in \mathtt{F}} \osp(V) \Big].
	\end{equation}
Here $\osp(V)$ is shorthand for a direct summand that is either an orthogonal or a symplectic Lie algebra. The isomorphism \eqref{eq:D(s)-general-iso} arises from a restriction of the Artin--Wedderburn isomorphism for the semisimple group algebra $\CG$. The sets $\mathtt{E}$ and $\mathtt{F}$ vary slightly depending on the Coxeter type of $G$ (and may be empty), but they always satisfy the properties
	\begin{itemize}
	\item if $V \in \mathtt{F}$, then $V \otimes \varepsilon \cong V$ as $\CG$-modules, and
	\item if $V \in \mathtt{E}$, then $V \otimes \varepsilon \not\cong V$ as $\CG$-modules.
	\end{itemize}
The summation over elements of $V \in \mathtt{E}/\!\!\sim$ means that we take a direct sum over a (any) complete set of representatives in $\mathtt{E}$ for the equivalence relation defined by $V \sim V \otimes \ve$.

Now consider the super case. Using the graded version of the Artin--Wedderburn theorem, $\Lie (\CG)$ can be identified with a direct sum of isomeric (type Q) and general linear (type M) Lie super\-algebras corresponding to the simple $\CG$-super\-modules of types Q and M, respectively:
	\begin{equation} \label{eq:Lie(CG)-super-general}
	\Lie (\CG) \cong 
	\Big[ \bigoplus_{W \in \Irr_Q(G)} Q(W) \Big]
	\oplus 
	\Big[ \bigoplus_{W \in \Irr_M(G)} \End(W) \Big]. 
	\end{equation}
Let $\fD(\Lie(\CG))$ denote the derived Lie sub-superalgebra of $\Lie(\CG)$, and let $\g \subseteq \Lie(\CG)$ be the Lie sub-superalgebra generated by the set of reflections in $G$. Our second collection of main results demonstrates that $\fg$ has a remarkably uniform description:
	\begin{equation} \label{eq:g-super-general}
	\g = \fD(\Lie(\CG)) + (\CSrm)^G.
	\end{equation}

It is worth highlighting the difference between the super and non-super settings. For example, the dimension of $\g$ is roughly twice the dimension of $\fs$. Combined with our previous work in \cite{DK:2025,DK:2026b}, the results of this paper also show that \eqref{eq:g-super-general} holds for all \emph{irreducible} finite reflection groups. We can offer no a priori explanation for why \eqref{eq:g-super-general} should hold across all irreducible types, as we are only able to deduce the equality as a consequence of inductive or computer calculations whose details vary by Coxeter type. Further, dimension calculations indicate that \eqref{eq:g-super-general} does not necessarily hold if $G$ is a reducible finite reflection group (e.g., when $G$ contains multiple factors of Coxeter type B), and we have no general theory for the structure of $\g$ when $G$ is reducible. This contrasts with the non-super setting, where the structure of $\fs$ can be easily determined from the Coxeter types of the irreducible components of $G$; see Lemma \ref{lemma:s(W)-Lie-decomposition}.

\subsection{Outline of the paper}

We begin in Section~\ref{S:LA-by-reflections} by establishing a number of general results on Lie algebras in this context, including several that constrain the image of $\fs$ under the Artin--Wedderburn isomorphism. For example, Lemma~\ref{lemma:osp-factorization} shows that if $V \cong V \otimes \varepsilon$, then the image of $\fs$ in the Artin--Wedderburn summand corresponding to $V$ lies in a factor of the form $\osp(V)$. On the other hand, Section~\ref{subsubsec:dual-pairs} shows that if $V \not\cong V \otimes \varepsilon$, then one can omit one or the other of $V$ or $V \otimes \varepsilon$ while still maintaining injectivity of the Artin--Wedderburn map on $\fD(\fs)$.  Likewise, Corollary~\ref{cor:exterior-powers-factorization} shows that one can omit the simple modules $\Lambda^{d}(V_{\refl})$ for $d > 0$ while still maintaining injectivity of the Artin--Wedderburn map on $\fD(\fs)$. It follows that the Artin--Wedderburn map restricts to an injective Lie algebra morphism
	\[
	\fD(\fs) \hookrightarrow \fsl(V_{\refl}) \oplus \Big[ \bigoplus_{V \in E/\sim} \fsl(V) \Big] \oplus \Big[ \bigoplus_{V \in F} \osp(V) \Big]
	\]
for sets $E$ and $F$ specified in Section~\ref{SS:summary-and-injection}. This morphism may or may not be an isomorphism. From this stage forward, each Coxeter type must be handled via a case-by-case analysis involving a mixture of representation theory and computer calculations. The sets $E$ and $F$ are further reduced for each type until an isomorphism is achieved; see Section~\ref{S:Lie-algebra-calcs} for details. It should be noted that in types $E_{6}$ and $E_{8}$ there is a lingering ambiguity for certain simple modules $V$ on whether $\osp(V)$ is an orthogonal Lie algebra or a symplectic Lie algebra. This can be determined by a study of a bilinear form or by direct computer calculation. However, since doing so would take some effort and we did not need that information, we leave it to the interested reader.

Next, Section~\ref{sec:preliminaries} contains a number of general results on Lie superalgebras in this context. This includes a classification of the simple $\C G$-supermodules, and, via the graded analogue of the Artin--Wedderburn theorem, the identification \eqref{eq:Lie(CG)-super-general} of $\Lie(\CG)$. In Section~\ref{subsec:Lie-superalgebra-generalities} we identify a list of properties that play a key role in studying the Lie sub-superalgebra $\fg$.  For example, these properties ensure that
	\[
	\g \subseteq  \fD(\Lie (\CG)) + (\CSrm)^G.
	\]
They also ensure that $\fg_{\ol{0}}$ (the even subspace of $\g$) is a reductive Lie algebra. This Lie algebra, its subalgebras, and its representation theory play a key role in showing that the above inclusion is in fact an equality. As in the non-super setting, the arguments involve type-by-type analysis involving a mixture of representation theory and computer calculations, which we carry out in Section \ref{S:Lie-superalgebra-calcs}. The careful reader will note that some of the properties listed in Section \ref{subsec:Lie-superalgebra-generalities} fail to hold for arbitrary finite reflection groups. These properties thus appear to be somewhat essential for the validity of the equality \eqref{eq:g-super-general}.

The computer calculations for this paper were conducted using GAP \cite{GAP4}. The code we used to carry out these calculations is included as an ancillary file with the copy of this paper posted on the arXiv.

\subsection{Further directions}

At this stage, the natural next step is to follow the methods of  Marin and others for the symmetric group and to use $\fs$ and $\fg$ to study the Artin groups and Iwahori-Hecke algebras in the various types beyond type A. It would also be interesting to describe $\fs$ and $\fg$ when $G$ is an infinite reflection group, and to consider the case when the ground field has positive characteristic.  These settings would require new ideas. Our approach leans heavily on $\C G$ being a finite-dimensional semisimple (super)algebra.  On the other hand, we expect our methods to generalize in some way to finite complex reflection groups and to Lie color algebras (i.e., graded Lie algebras where $\Z_{2}$ is replaced with $\Z/r\Z$ for some $r > 2$).

\subsection{Generalities} \label{subsec:generalities}

Let $G$ be a finite reflection group with set of reflections $S$. Equivalently, there exists a subset $S_0 \subseteq S$ of simple reflections such that $(G,S_0)$ is a finite Coxeter system. If $S_0 = \{s_1,\ldots,s_n \}$, then $G$ is generated by the set $S_0$ subject only to relations of the form $(s_is_j)^{m_{i,j}} = 1$, with $m_{i,i} = 1$ and $m_{i,j} = m_{j,i} \geq 2$ for $i \neq j$.

Let $\Irr(G) = \{ S^{\lambda}: \lambda \in \Lambda \}$ be a complete set of simple $\CG$-modules up to isomorphism; when it is necessary to emphasize the dependence of the indexing set $\Lambda$ on the group $G$, we may write $\Lambda = \Lambda(G)$. Let $\ve: G \to \set{\pm 1}$ be the one-dimensional sign representation of $G$ that is defined on reflections $s \in S$ by $\ve(s) = -1$. Then for each $\lambda \in \Lambda$ one has $S^{\lambda} \otimes \ve \cong S^{\lambda'}$ for a unique $\lambda' \in \Lambda$. Given a module $V$, we may also write $V^\ve$ for $V \otimes \ve$, so $(S^\lambda)^\ve \cong S^{\lambda'}$. Schur's lemma implies for each $\lambda \in \Lambda$ that, up to mutual rescaling, there exist unique linear maps $\phi^\lambda: S^\lambda \to S^{\lambda'}$ and $\phi^{\lambda'}: S^{\lambda'} \to S^\lambda$ (corresponding to $\CG$-module isomorphisms $S^\lambda \otimes \ve \cong S^{\lambda'}$ and $S^{\lambda'} \otimes \ve \cong S^{\lambda}$) such that for all $g \in G$, $v \in S^\lambda$, and $w \in S^{\lambda'}$,
	\begin{align}
	\phi^\lambda(g \cdot v) &= \ve(g) g \cdot \phi^\lambda(v), &
	\phi^{\lambda'}(g \cdot w) &= \ve(g) g \cdot \phi^{\lambda'}(w), \label{eq:associator} \\
	\phi^{\lambda'} \circ \phi^\lambda &= \id_{S^\lambda}, & 
	\phi^{\lambda} \circ \phi^{\lambda'} &= \id_{S^{\lambda'}}. \label{eq:inverse-associator}
	\end{align}
In the case $\lambda = \lambda'$, this specifies the map $\phi^\lambda : S^\lambda \to S^\lambda$ up to scalar multiplication by $\pm 1$. In the literature, a map $\phi^\lambda$ satisfying \eqref{eq:associator} is called an \emph{associator} for the isomorphism $S^\lambda \otimes \ve \cong S^{\lambda'}$.

Let $\CSrm \subset \CG$ be the subspace of $\CG$ spanned by $S$, and let
	\[
	(\CSrm)^G = \{ x \in \CSrm : gxg^{-1} = x \}
	\]
be the subspace of elements invariant under the conjugation action of $G$. Since $S$ is a union of conjugacy classes in $G$, say $S = C_1 \cup \cdots \cup C_t$, then $(\CSrm)^G$ admits the basis of class sums $\calC_1,\ldots,\calC_t$, where $\calC_i = \sum_{s \in C_i} s$. We will frequently assume that the Coxeter system $(G,S_0)$ is irreducible.\footnote{Recall that an arbitrary finite reflection group is a direct product of irreducible reflection groups; see \cite[1.3.1]{Geck:2000}.} In that case the set $S$ consists of at most two conjugacy classes, which we may denote as $C_\ell$ and $C_s$, referring to the elements of $C_\ell$ and $C_s$ as the \emph{long} and \emph{short} reflections, respectively. If $G$ has only one conjugacy class of reflections, then we take $C_s = \emptyset$ and we refer to all reflections in $G$ as long. Let $\calC_\ell = \sum_{s \in C_\ell} s$ and $\calC_s = \sum_{s \in C_s} s$, where by definition $\calC_s = 0$ if $C_s = \emptyset$.

Each ordinary irreducible representation of $G$ can be realized over a subfield of the real numbers; cf.\ \cite[5.3.8, 5.4.5, 5.5.6, 5.6.4]{Geck:2000}.\footnote{Suppose $G$ is irreducible. If $G$ is of type $H_3$ or $H_4$, then $G$ splits over $\Q(\sqrt{5})$. Otherwise, $G$ splits over $\Q$.} Then for each $\lambda \in \Lambda$, one has $S^\lambda = S_\R^\lambda \otimes_{\R} \C$ for a simple $\RG$-module $S_\R^\lambda$, and \cite[Lemma 7.4]{Lam:2001} implies that the map $\phi^\lambda: S^\lambda \to S^{\lambda'}$ can be realized via scalar extension from a linear isomorphism $\phi_\R^\lambda: S_\R^\lambda \to S_\R^{\lambda'}$ satisfying the analogue over $\R$ of \eqref{eq:associator}. It also follows that the maps $\phi_\R^\lambda$ and $\phi_\R^{\lambda'}$ can be chosen (by rescaling if necessary) such that
	\begin{equation}
	\phi_\R^{\lambda'} \circ \phi_\R^\lambda = \alpha_{S_\R^\lambda} \cdot \id_{S_\R^\lambda} \qquad \text{and} \qquad 
	\phi_\R^{\lambda} \circ \phi_\R^{\lambda'} = \alpha_{S_\R^{\lambda'}} \cdot \id_{S_\R^{\lambda'}},
	\end{equation}
with $\alpha_{S_\R^\lambda} = \alpha_{S_\R^{\lambda'}} \in \set{\pm 1}$. In the case $\lambda = \lambda'$, this specifies $\phi_\R^\lambda$ up to multiplication by $\pm 1$.


\section{Lie algebras generated by reflections}\label{S:LA-by-reflections}

In this section let $G$ be a finite reflection group with set of reflections $S$ as in Section \ref{subsec:generalities}.

\subsection{General structure of the Lie algebra generated by reflections} \label{subsec:Lie-algebra-generalities}

Write $\Lie(\CG)$ for the group algebra $\CG$ considered as a Lie algebra via the commutator bracket $[x,y] = xy-yx$. Let $\fs = \fs(G)$ be the Lie subalgebra of $\Lie(\CG)$ generated by the set $S$ of reflections in $G$.

\begin{remark} \label{remark:s-generated-by-S0}
If $x \in \Lie(\CG)$ and if $s \in G$ is an element of order $2$, then
	\[ \textstyle
	sxs = x - \frac{1}{2} \cdot [s,[s,x]] \in \Lie(\CG).
	\]
This implies that $\fs$ is also generated by the subset $S_0$ of simple reflections; cf.\ \cite[Remark 4.1.7]{DK:2026a}.
\end{remark}

The following lemma explains why it suffices to consider the case when $G$ is irreducible.

\begin{lemma} \label{lemma:s(W)-Lie-decomposition}
Let $G$ be a finite reflection group with set of reflections $S$. Let $G = G_1 \times \cdots \times G_r$ be a decomposition of $G$ into a product of irreducible finite reflection groups. Then as Lie algebras,
	\[
	\fs(G) = \fs(G_1) \oplus \cdots \oplus \fs(G_r).
	\]
\end{lemma}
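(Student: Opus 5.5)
Two things have to be checked: that the sum is direct, and that it equals $\fs(G)$. For each $i$, identify $\C G_i$ with the subalgebra $\C G_i \otimes 1 \otimes \cdots$ of $\CG \cong \C G_1 \otimes \cdots \otimes \C G_r$. The set of reflections of $G$ is the disjoint union $S = S_1 \sqcup \cdots \sqcup S_r$, where $S_i$ is the set of reflections of $G_i$. This holds because $G$ acts on $V = V_1 \oplus \cdots \oplus V_r$ factorwise, and an element fixing a hyperplane must act as the identity on all but one summand. (Equivalently, every reflection is conjugate to a simple reflection, and $S_0$ is the union of the simple reflections of the factors.)

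First we show $\fs(G) = \sum_i \fs(G_i)$. Elements of $\C G_i$ and $\C G_j$ commute for $i \neq j$, so $[\C G_i, \C G_j] = 0$. Hence $\sum_i \fs(G_i)$ is a Lie subalgebra of $\Lie(\CG)$: the bracket of two elements in different summands vanishes, and brackets within $\fs(G_i)$ stay in $\fs(G_i)$. This subalgebra contains $S$, so it contains $\fs(G)$. Conversely, each $\fs(G_i)$ is generated by $S_i \subseteq S$, so each $\fs(G_i) \subseteq \fs(G)$.

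Next we show the sum is direct. The subspaces $\C G_i \subseteq \CG$ (as embedded above) are not independent: they all contain the identity element $1$. So directness is the substantive point. It suffices to show $\fs(G_i) \subseteq \C G_i^{0}$, where $\C G_i^{0}$ is the span of $\C G_i \cap (G \setminus \{1\})$, i.e. the span of the non-identity group elements of $G_i$. Granting this, the spaces $\C G_i^{0}$ have bases given by the disjoint sets $G_i \setminus \{1\} \subseteq G$, so they are linearly independent in $\CG$, and the sum is direct.

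To prove the inclusion, use the trace functional $\tau(x) = $ coefficient of $1$ in $x$. It satisfies $\tau(xy) = \tau(yx)$, so $\tau$ vanishes on all commutators. It also vanishes on $S_i$, since reflections are not the identity. Therefore $\tau$ vanishes on $\fs(G_i)$, which is spanned by $S_i$ together with iterated brackets. Since $\C G_i = \C 1 \oplus \C G_i^{0}$ and $\tau$ is exactly the coefficient of $1$, this gives $\fs(G_i) \subseteq \C G_i^{0}$.

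Finally, because $[\fs(G_i), \fs(G_j)] = 0$ for $i \neq j$, the internal direct sum $\fs(G) = \fs(G_1) \oplus \cdots \oplus \fs(G_r)$ is a direct sum of Lie algebras, as claimed.
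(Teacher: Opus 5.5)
Your proof is correct and follows the same route as the paper's, which likewise rests on $S = S_1 \sqcup \cdots \sqcup S_r$ and on the elements of $G_i$ commuting with those of $G_j$ for $i \neq j$. The paper simply asserts that the resulting sum is direct. Your trace-functional argument, placing $\fs(G_i)$ in the span of $G_i \setminus \{1\}$, supplies the justification the paper leaves implicit. That step is genuinely needed, since every $\C G_i$ contains $1$, and it is exactly what breaks in the super setting, where $[s,s] = 2 \cdot 1$.
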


\begin{proof}
Let $S_i = S \cap G_i$. Then $S$ is the disjoint union of the sets $S_1,\ldots,S_r$. For $i \neq j$, the elements of $G_i$ commute with the elements of $G_j$ in $\Lie(\CG)$. Then $\fs(G) = \fs(G_1) \oplus \cdots \oplus \fs(G_r)$.
\end{proof}


Let $\fD(\fs) = [\fs,\fs]$ be the derived subalgebra of $\fs$, and let $Z(\fs)$ be the center of $\fs$. Below we recall some general results on the structure of $\fs$; proofs can be found in \cite[\S4.1]{DK:2026a}.

\begin{lemma} \label{lemma:g-reductive}
With notation and assumptions as above, the Lie algebra $\fs$ is reductive, $\fD(\fs)$ is semisimple, $\fs = \fD(\fs) \oplus Z(\fs)$, and $Z(\fs) = (\CSrm)^G = \Span\set{\calC_1,\ldots,\calC_t}$.
\end{lemma}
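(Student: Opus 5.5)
The plan is to show that $\fs$ is closed under a suitable adjoint operation, which forces it to be reductive, and then to identify the center by a Schur-type argument in the Artin--Wedderburn decomposition.

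\textbf{Reductivity.} Equip $\CG$ with the standard Hermitian inner product in which $G$ is an orthonormal basis. Left multiplication by $g \in G$ is then unitary, so the adjoint of $L_g$ is $L_{g^{-1}}$. Each reflection $s$ satisfies $s = s^{-1}$, so $L_s$ is self-adjoint. Consequently the image of $\fs$ under the faithful representation $x \mapsto L_x$ on $\CG$ is generated by Hermitian operators. This image is therefore stable under the conjugate-linear anti-involution $x \mapsto x^*$, which sends $g \mapsto g^{-1}$ and satisfies $[x,y]^* = [y^*,x^*]$. I would then use the standard fact: a complex Lie algebra of matrices that is closed under conjugate transpose is reductive. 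The reason is that if $\mathfrak{a}$ is an ideal, then its orthogonal complement with respect to the positive definite form $\tr(xy^*)$ is again an ideal. Hence every ideal has a complement, the adjoint representation is completely reducible, and $\fs$ is reductive.

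\textbf{Center.} From reductivity it follows at once that $\fs = \fD(\fs) \oplus Z(\fs)$ and that $\fD(\fs)$ is semisimple. It remains to identify $Z(\fs)$.
\begin{itemize}
\item \emph{$(\CSrm)^G \subseteq Z(\fs)$.} Each class sum $\calC_i$ lies in $\fs$, since it is a sum of generators. It is central in $\CG$, so it commutes with all of $\fs$.
\item \emph{$Z(\fs) \subseteq Z(\CG)$.} If $z \in Z(\fs)$, then $z$ commutes with every reflection. Since the reflections generate $G$, $z$ commutes with $G$ and hence lies in $Z(\CG)$.
\item \emph{Reduction to the abelianization.} Because $\fD(\fs)$ is spanned by brackets, $\fs = \CSrm + \fD(\fs)$. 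Modulo $\fD(\fs)$, the center $Z(\fs)$ maps isomorphically onto $\fs/\fD(\fs)$, and this quotient is spanned by the images of the reflections.
\end{itemize}

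\textbf{Main obstacle.} The key step is to show $\dim Z(\fs) \le t$, or equivalently that $Z(\fs)$ lies in $\CSrm$. I would first try averaging. Conjugation by $g \in G$ is an automorphism of $\CG$ that preserves $\fs$ and $\fD(\fs)$. Since $G$ is finite, I would choose the splitting $Z(\fs)$ and average over $G$. Because $Z(\fs) \subseteq Z(\CG)$ is already pointwise $G$-fixed, the projection $\fs \to Z(\fs)$ along $\fD(\fs)$ is $G$-equivariant. This projection maps $\CSrm$ onto $Z(\fs)$. Equivariance together with the trivial $G$-action on $Z(\fs)$ then shows that $s$ and $gsg^{-1}$ have the same image. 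Hence $Z(\fs)$ is spanned by the images of the $t$ class sums, scaled appropriately, and in particular $\dim Z(\fs) \le t$.

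To pin down $Z(\fs) = (\CSrm)^G$ exactly, note that $\calC_i \in Z(\fs)$ and that the $\calC_i$ are linearly independent. Together with the dimension bound, this gives equality.
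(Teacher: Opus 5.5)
Your proof is correct. It reaches the same conclusions by somewhat different means than the route the paper relies on.

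The paper gives no argument here and defers to \cite[\S4.1]{DK:2026a}. The super analogues it recalls (Lemma~\ref{lemma:lie-algebra-center} and Proposition~\ref{prop:g0-reductive}) indicate the tool used there: the averaging operator $p(z)=\frac{1}{\abs{G}}\sum_{g\in G} gzg^{-1}$. This is the projection of $\CG$ onto $Z(\CG)$, and it acts on each Artin--Wedderburn block by normalized trace. Because $p$ kills commutators, preserves $\fs$, is the identity on $Z(\fs)\subseteq Z(\CG)$, and maps $\CSrm$ onto $(\CSrm)^G$, one reads off $Z(\fs)=p(\fs)=p(\CSrm)=(\CSrm)^G$ and $\fD(\fs)=\ker(p|_{\fs})$ at once.

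You instead use only that the canonical projection $\pi\colon\fs\to Z(\fs)$ along $\fD(\fs)$ is constant on conjugacy classes, which avoids the trace fact. In fact $\pi=p|_{\fs}$. Since $\pi(\calC_i)=\calC_i$, you get $\pi(s)=\calC_i/\abs{C_i}$ for $s\in C_i$ directly, so your dimension count is optional. Your talk of choosing a splitting and averaging is also unnecessary, since the splitting $\fs=\fD(\fs)\oplus Z(\fs)$ is canonical and your argument never actually averages.

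For reductivity, your $*$-closure argument is self-contained and uses only $S=S^{-1}$, not that $S$ generates $G$. The alternative is to note that $\CG$ is a faithful completely reducible $\fs$-module, because simple $\CG$-modules stay simple over $\fs$ (which does use generation), and then quote the standard criterion.

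What the $p$-formulation buys in exchange is the explicit description $\fD(\fs)=\ker(p|_{\fs})$. That is the form the authors carry over to $\gzero$ in the super setting.
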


Given a $\CG$-module $V$, let $\Res^{\CG}_{\fs}(V)$ be the $\fs$-module obtained by restriction.

\begin{lemma} \label{lemma:simples-remain-simple}
If $V \in \Irr(G)$, then $\Res^{\CG}_{\fs}(V)$ is simple as a module over $\fs$ and over $\fD(\fs)$.
\end{lemma}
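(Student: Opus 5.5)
The plan is to show that the associative subalgebra of $\CG$ generated by $\fs$ is all of $\CG$. Simplicity over $\fs$ then follows from simplicity over $\CG$, and we pass to $\fD(\fs)$ using the central part of $\fs$.

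\emph{Step 1.} Let $A \subseteq \CG$ be the unital associative subalgebra generated by $\fs$. Since $S \subseteq \fs$, the algebra $A$ contains every reflection, hence every product of reflections. Every element of $G$ is such a product (in fact a product of simple reflections), so $A \supseteq G$ and therefore $A = \CG$. An $\fs$-stable subspace $W$ of $\Res^{\CG}_{\fs}(V)$ is stable under every element of $\fs$, hence under every product of such elements and under $1$, i.e. under $A = \CG$. Because $V$ is simple over $\CG$, we get $W = 0$ or $W = V$. So $\Res^{\CG}_{\fs}(V)$ is simple over $\fs$.

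\emph{Step 2.} By Lemma~\ref{lemma:g-reductive}, $\fs = \fD(\fs) \oplus Z(\fs)$ with $Z(\fs) = (\CSrm)^G = \Span\set{\calC_1,\ldots,\calC_t}$. Each class sum $\calC_i$ is central in the whole group algebra $\CG$, not only in $\fs$. By Schur's lemma, each $\calC_i$ therefore acts on the simple $\CG$-module $V$ by a scalar. Consequently any $\fD(\fs)$-submodule $W \subseteq V$ is automatically stable under $Z(\fs)$, hence under $\fs = \fD(\fs) + Z(\fs)$. By Step 1 it is $0$ or $V$, so $\Res^{\CG}_{\fs}(V)$ is also simple over $\fD(\fs)$.

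\emph{Main obstacle.} There is essentially none. The only point that needs care is the second step: we must use that $Z(\fs)$ lies in the center of $\CG$, not merely in the center of $\fs$, so that it acts by scalars on $V$. This is exactly what the identification $Z(\fs) = (\CSrm)^G$ in Lemma~\ref{lemma:g-reductive} provides.
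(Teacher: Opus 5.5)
Your proof is correct and follows essentially the standard argument that the paper defers to \cite[\S4.1]{DK:2026a}, and it mirrors \cite[Lemmas 2.3.4 and 2.3.6]{DK:2026b} in the super setting. Because $S$ generates $\CG$ as an associative algebra, $\fs$-submodules of $V$ are $\CG$-submodules; because $\fs = \fD(\fs) \oplus Z(\fs)$ with $Z(\fs) = (\CSrm)^G$ acting by scalars, simplicity passes to $\fD(\fs)$. Citing Lemma~\ref{lemma:g-reductive} in Step 2 is not circular, since reductivity of $\fs$ already follows from your Step 1: $\CG$ is then a faithful completely reducible $\fs$-module.
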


If $V$ is a $\CG$-module, then the dual space $V^* = \Hom(V,\C)$ is a $\CG$-module with action defined for $g \in G$, $\phi \in V^*$, and $v \in V$ by $(g.\phi)(v) = \phi(g^{-1}.v)$. On the other hand, $[\Res^{\CG}_{\fs}(V)]^*$ is an $\fs$-module via the contragredient action, defined for $x \in \fs$, $\phi \in V^*$, and $v \in V$ by $(x.\phi)(v) = -\phi(x.v)$. Denote $V^*$ with this latter $\fs$-module structure by $V^{*,\Lie}$. Recall the notation $V^\ve = V \otimes \ve$.

\begin{lemma} \label{lemma:dual-tensor-sign-restriction}
Let $\ve: G \to \{\pm 1 \}$ be the sign representation of $G$. Then $\Res^{\CG}_{\fs}((V^*)^\ve) \cong V^{*,\Lie}$.
\end{lemma}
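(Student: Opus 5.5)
The approach is to write down an explicit identification and check it on generators. Both modules have underlying space $V^*$, so I would take the identity map $\id: V^* \to V^*$ as the candidate isomorphism.

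The first step is to compare the actions of a reflection $s \in S$. On $\Res^{\CG}_{\fs}((V^*)^\ve)$, $s$ acts on $\phi$ by $s \star \phi = \ve(s)\,(s.\phi)$. This gives
\[
(s\star\phi)(v) = -\phi(s^{-1}.v) = -\phi(s.v),
\]
since $s^{-1}=s$. On $V^{*,\Lie}$, $s$ acts by $(s.\phi)(v) = -\phi(s.v)$. So the two actions of $s$ agree.

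The second step is to pass from $S$ to all of $\fs$. Both are Lie algebra representations of $\fs$ on the same vector space $V^*$.
\begin{itemize}
\item The first is the restriction of an associative $\CG$-action, hence a Lie algebra map $\fs \to \gl(V^*)$.
\item The second is the contragredient of the Lie representation $\Res^{\CG}_{\fs}(V)$, which is also a Lie algebra map.
\end{itemize}
Two Lie algebra homomorphisms $\fs \to \gl(V^*)$ that agree on a generating set agree everywhere. Since $\fs$ is generated by $S$ by definition (or by $S_0$, Remark \ref{remark:s-generated-by-S0}), the identity map intertwines the two actions. It is therefore an isomorphism of $\fs$-modules.

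The only point needing care is confirming that the first action really is a Lie algebra representation on $\fs$. It is: for $x,y \in \CG$, $\rho(xy-yx)=\rho(x)\rho(y)-\rho(y)\rho(x)$ for any algebra representation $\rho$. This is a routine check rather than a real obstacle. No semisimplicity or irreducibility of $V$ is needed, so the statement holds for any finite-dimensional $\CG$-module $V$.
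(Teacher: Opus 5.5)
Your proposal is correct and follows the standard argument, which the paper does not reproduce but defers to \cite[\S4.1]{DK:2026a}. The identity map on $V^*$ intertwines the two actions of each reflection $s$ because $s^{-1}=s$ and $\ve(s)=-1$, and two Lie algebra representations of $\fs$ that agree on the generating set $S$ agree on all of $\fs$.
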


\begin{corollary} \label{cor:tensor-sign-restriction}
If $V$ is a finite-dimensional $\CG$-module, then $\Res^{\CG}_{\fs}(V^\ve) \cong V^{*,\Lie}$.
\end{corollary}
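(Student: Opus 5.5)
The plan is to deduce the corollary from Lemma~\ref{lemma:dual-tensor-sign-restriction} by replacing $V$ with $V^*$. The only extra input is that every finite-dimensional $\CG$-module is self-dual. This holds because, as recalled in Section~\ref{subsec:generalities}, every ordinary representation of $G$ is realized over $\R$. So $V = V_\R \otimes_\R \C$ for some $\RG$-module $V_\R$. Averaging any inner product on $V_\R$ over $G$ gives a $G$-invariant nondegenerate symmetric bilinear form, and its complexification gives a $\CG$-module isomorphism $V \cong V^*$. Alternatively, one can avoid choosing forms: the character of $V$ is real-valued, so $\chi_{V^*} = \overline{\chi_V} = \chi_V$, and $V \cong V^*$ by semisimplicity of $\CG$.

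Concretely, I would write $W = V^*$. Then $W^* \cong V$ as $\CG$-modules via the canonical double-dual map, so Lemma~\ref{lemma:dual-tensor-sign-restriction} applied to $W$ gives
\[
\Res^{\CG}_{\fs}(V^\ve) \cong \Res^{\CG}_{\fs}((W^*)^\ve) \cong W^{*,\Lie} = (V^{*})^{*,\Lie}.
\]
The remaining task is to identify $(V^*)^{*,\Lie}$ with $V^{*,\Lie}$ as $\fs$-modules. For this, fix a $\CG$-isomorphism $\theta: V \to V^*$. It is in particular an $\fs$-module isomorphism $\Res^{\CG}_{\fs}(V) \cong \Res^{\CG}_{\fs}(V^*)$. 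The contragredient construction $U \mapsto U^{*,\Lie}$ is functorial for $\fs$-module isomorphisms, so $\theta^*$ (suitably inverted) gives $(V^*)^{*,\Lie} \cong V^{*,\Lie}$, which finishes the argument.

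A more direct route avoids dualizing twice. Apply Lemma~\ref{lemma:dual-tensor-sign-restriction} to $V$ itself to get $\Res^{\CG}_{\fs}((V^*)^\ve) \cong V^{*,\Lie}$. Then use $V^* \cong V$ as $\CG$-modules, hence $(V^*)^\ve \cong V^\ve$ as $\CG$-modules, and restrict to $\fs$. I would present this shorter version.

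There is no real obstacle here. The only point needing care is justifying $V \cong V^*$ for arbitrary finite-dimensional (not necessarily simple) $V$. I would handle it by decomposing $V$ into simples using Maschke's theorem and applying the real-realizability statement from Section~\ref{subsec:generalities} to each simple summand; the real-character argument also works directly for arbitrary $V$.
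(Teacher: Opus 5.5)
Your proposal is correct, and the shorter route you say you would present is exactly the paper's argument: real-valuedness of the characters gives $V \cong V^*$ as $\CG$-modules, and then Lemma~\ref{lemma:dual-tensor-sign-restriction} together with functoriality of restriction to $\fs$ yields $\Res^{\CG}_{\fs}(V^\ve) \cong \Res^{\CG}_{\fs}((V^*)^\ve) \cong V^{*,\Lie}$. Your first route, via the double dual and functoriality of the contragredient, is also valid but unnecessary.
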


\begin{proof}
Each ordinary irreducible representation of $G$ can be realized over a subfield of $\R$, and hence each irreducible character of $G$ is real-valued. This implies for each finite-dimensional $\CG$-module $V$ that $V$ and $V^*$ have the same character, and hence $V \cong V^*$ as $\CG$-modules. The result then follows from the previous lemma and the functoriality of restriction from $\CG$ to $\fs$.
\end{proof}

\begin{proposition} \label{prop:iso-equivalent}
Let $V_1$ and $V_2$ be simple $\CG$-modules of dimension greater than one, and suppose the set $S$ consists of a single conjugacy class. Then the following are equivalent:
	\begin{enumerate}
	\item \label{item:G-iso} $V_1$ and $V_2$ are isomorphic as $\CG$-modules.
	\item \label{item:g-iso} $V_1$ and $V_2$ are isomorphic as $\fs$-modules.
	\item \label{item:D(g)-iso} $V_1$ and $V_2$ are isomorphic as $\fD(\fs)$-modules.
	\end{enumerate}
\end{proposition}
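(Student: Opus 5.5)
The implications (\ref{item:G-iso})$\Rightarrow$(\ref{item:g-iso})$\Rightarrow$(\ref{item:D(g)-iso}) are immediate, because $\fD(\fs) \subseteq \fs \subseteq \Lie(\CG)$ and restriction is functorial. The substance is (\ref{item:D(g)-iso})$\Rightarrow$(\ref{item:G-iso}). I will route it through (\ref{item:g-iso}) by controlling the center, and then go from $\fs$-isomorphisms to $G$-isomorphisms using the fact that $\fs$ contains the reflections themselves.

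\emph{Step 1: from $\fD(\fs)$ to $\fs$.} By Lemma~\ref{lemma:g-reductive}, $\fs = \fD(\fs) \oplus Z(\fs)$. Since $S$ is a single conjugacy class, $Z(\fs) = \C \calC_\ell$. By Lemma~\ref{lemma:simples-remain-simple}, each $V_i$ is simple over $\fs$, so the central element $\calC_\ell$ acts on $V_i$ by a scalar $c_i$. This scalar is the central character value
\[ c_i = \frac{|C_\ell| \, \chi_{V_i}(s)}{\dim V_i}, \qquad s \in C_\ell. \]
Suppose $f : V_1 \to V_2$ is a $\fD(\fs)$-isomorphism. Then $f$ is automatically an $\fs$-isomorphism if $c_1 = c_2$. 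If instead $c_1 \neq c_2$, I twist $f$ by a character of $\fs$ that vanishes on $\fD(\fs)$, so that $V_1 \otimes \chi \cong V_2$ as $\fs$-modules. To get an honest $\fs$-isomorphism I will need to rule out this second possibility, or else absorb it. My main hope is to absorb it as follows. A reflection $s$ acts on $V_i$ with eigenvalues $\pm 1$, and we may write
\[ s = \tfrac{1}{|C_\ell|}\calC_\ell + (\text{component in } \fD(\fs)). \]
Both $V_i$ have dimension greater than one and are simple over $\fD(\fs)$. So the component of $s$ in $\fD(\fs)$ acts on $V_1$ and on $V_2$ through $f$ with the same spectrum, and the spectra of $s$ on $V_1$ and on $V_2$ differ by a shift $(c_1-c_2)/|C_\ell|$. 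Both spectra lie in $\{\pm1\}$. A nonzero shift would force $s$ to act on $V_1$ by the scalar $-1$ and on $V_2$ by the scalar $+1$ (or the reverse). Since all reflections are conjugate, every reflection would then act by a scalar on $V_1$. As $G$ is generated by reflections, $V_1$ would be one-dimensional, which is a contradiction. Hence $c_1 = c_2$ and $f$ is an $\fs$-isomorphism.

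\emph{Step 2: from $\fs$ to $G$.} An $\fs$-module map commutes with the action of every $s \in S \subseteq \fs$. The action of $s$ here is the same linear operator as its group action, since $\fs \subseteq \CG$ acts through the algebra representation. Because $S$ generates $G$, an $\fs$-isomorphism $V_1 \to V_2$ is therefore a $\CG$-isomorphism. This step is trivial.

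So the main obstacle is Step~1, namely checking that the central scalar is forced to agree. I expect the spectral argument to be the cleanest route: decompose $s$ into central and derived parts and use that reflections have eigenvalues in $\{\pm 1\}$, with both eigenvalues occurring on any simple module of dimension greater than one. The single-conjugacy-class hypothesis enters because it makes $Z(\fs)$ one-dimensional and spanned by $\calC_\ell$, so that a single reflection's projection onto the center is the fixed element $\calC_\ell/|C_\ell|$. It also enters in the last part of the argument, where a scalar action of one reflection propagates to scalar actions of all reflections.
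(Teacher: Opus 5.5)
Your proof is correct and follows essentially the same route as the arguments the paper cites (Marin's Proposition~2 and its analogues in the authors' earlier work). You split a reflection as a $\fD(\fs)$-component plus a central element acting by scalars, then use $s^2 = 1$ (eigenvalues $\pm 1$) to show that a nonzero scalar shift would force $s$, hence every conjugate reflection and so all of $G$, to act by scalars, contradicting $\dim V_i > 1$. The one claim you leave unjustified, that the central component of $s$ is $\frac{1}{|C_\ell|}\calC_\ell$, follows from the conjugation-averaging map, which kills commutators and fixes $(\CSrm)^G$. The exact value is not really needed anyway: your spectral argument already shows the shift vanishes for each reflection, so $f$ commutes with every $s \in S$.
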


\begin{proof}
Argue as in the proofs of \cite[Proposition 4.6.3]{DK:2025}, \cite[Proposition 2.3.7]{DK:2026b}, or \cite[Proposition 2]{Marin:2007}; see also the proof of \cite[Proposition 4.2.5]{DK:2026a}.
\end{proof}

\subsection{Factorizations} \label{subsec:factorizations}

Given $V \in \Irr(G)$, let $\rho_V: \CG \to \End(V)$ be the corresponding module structure map. Then the Artin--Wedderburn isomorphism for $\CG$,
	\begin{equation} \label{eq:CG-Artin-Wedderburn}
	\CG \cong \bigoplus_{V \in \Irr(G)} \End(V), \qquad \sigma \mapsto \bigoplus_{V \in \Irr(G)} \rho_V(\sigma).
	\end{equation}
restricts to an injective Lie algebra homomorphism
	\begin{equation} \label{eq:D(s)-Artin-Wedderburn}
	\fD(\fs) \hookrightarrow \bigoplus_{V \in \Irr(G)} \fsl(V).
	\end{equation}
For $V \in \Irr(G)$, set $\fs_V = \rho_V(\fD(\fs)) \subseteq \fsl(V)$. If $\dim(V) = 1$, then $\fs_V = 0$.

\subsubsection{Dual pairs} \label{subsubsec:dual-pairs}

Let $V \in \Irr(G)$. By Corollary \ref{cor:tensor-sign-restriction}, $V^\ve \cong V^{*,\Lie}$ as $\fs$-modules. Then with respect to an appropriate choice of basis, the $\fs$-module structure map $\rho_{V^\ve}: \fs \to \gl(V^\ve)$ can be written in the form $X \mapsto -\rho_V(X)^t$, where $A^t$ denotes the transpose of $A$. This map evidently factors through $\rho_V$, and one gets the following commutative diagram:
	\begin{equation} \label{eq:dual-pair-iso}
	\vcenter{\xymatrix{
	\fD(\fs) \ar@{->>}[r]^{\rho_V} \ar@{->>}[dr]_{\rho_{V^\ve}} & \fs_{V} \ar@{->}[d]^{\cong} \\
	 & \fs_{V^\ve}.
	}}
	\end{equation}

\subsubsection{Self-dual Lie algebra representations}

Let $V = S^\lambda \in \Irr(G)$ such that $V \cong V^\ve$, i.e., such that $\lambda = \lambda'$. Let $U = S_\R^\lambda$ be a realization of $V$ over $\R$, so also $U^\ve = U \otimes_\R \ve \cong U$, and let $\phi : U \to U$ be an associator as in Section \ref{subsec:generalities}. Then $\phi^2 = \alpha_U \cdot \id_{U}$ with $\alpha_U \in \set{\pm 1}$, so $\phi^{-1} = \alpha_U \cdot \phi$.

Let $\subgrp{-,-}$ be a $G$-invariant (symmetric, positive definite) inner product on $U$, and let $\phi^*: U \to U$ be the adjoint of $\phi$ with respect to this inner product, i.e., $\phi^*$ is the unique linear map such that for all $u,v \in U$ one has $\subgrp{\phi(u),v} = \subgrp{u,\phi^*(v)}$. By the $G$-invariance of the inner product, it follows that $\phi^*$ also satisfies \eqref{eq:associator}. Then $\phi^* \circ \phi \in \Hom_{\RG}(U,U)$, so Schur's Lemma and \cite[Lemma 7.4]{Lam:2001} imply that $\phi^* \circ \phi = \beta_U \cdot \id_U$ for some $0 \neq \beta_U \in \R$. Further, for $0 \neq u \in U$ one has
	\[
	0 < \subgrp{\phi(u),\phi(u)} = \subgrp{u,(\phi^* \circ \phi)(u)} = \beta_U \cdot \subgrp{u,u},
	\]
and $\subgrp{u,u} > 0$, so $\beta_U > 0$. Now $\alpha_U \cdot \phi = \phi^{-1} = (1/\beta_U) \cdot \phi^*$, so $\phi^* = (\alpha_U \beta_U) \cdot \phi$.

Now define a new bilinear form $(-|-)'$ on $U$ by $(u|v)' = \subgrp{\phi(u),v}$. This form is non-degenerate because $\phi$ is a linear isomorphism. For $u,v \in U$ one has
	\begin{align*}
	(u|v)' &= \subgrp{\phi(u),v} \\
	&= \subgrp{u,\phi^*(v)} \\
	&= (\alpha_U \beta_U) \cdot \subgrp{u,\phi(v)} & \text{because $\phi^* = (\alpha_{U} \beta_{U}) \cdot \phi$,} \\
	&= (\alpha_U \beta_U) \cdot \subgrp{\phi(v),u} & \text{because $\subgrp{-,-}$ is symmetric,} \\
	&= (\alpha_U \beta_U) \cdot (v|u)'.
	\end{align*}
But then $(u|v)' = (\alpha_U \beta_U) \cdot (v|u)' = (\alpha_U \beta_U)^2 \cdot (u|v)'$. Since the form $(-|-)'$ is non-degenerate, this implies that $(\alpha_U \beta_U)^2 = 1$. Since $\beta_U > 0$ and $\alpha_U \in \set{\pm 1}$, we deduce that $\beta_U = 1$ and $(u|v)' = \alpha_U \cdot (v|u)'$. Thus the form $(-|-)'$ is either symmetric or anti-symmetric. Finally, extending scalars bilinearly from $\R$ to $\C$, the form $(-|-)'$ lifts to a non-degenerate bilinear form $(-|-)$ on $V$ that is symmetric if $\alpha_U = +1$ and is anti-symmetric if $\alpha_U = -1$.

\begin{lemma} \label{lemma:osp-factorization}
Let $V \in \Irr(G)$ such that $V \cong V^\ve$, and let $(-|-)$ be the non-degenerate bilinear form on $V$ defined as above via the choice of a real form $U$ for $V$. Then $\rho_V(\fs) \subseteq \osp(V)$, where
	\[
	\osp(V) := \{ x \in \gl(V) : (x.u|v) + (u|x.v) = 0 \text{ for all } u,v \in V \} = \begin{cases} 
	\fso(V) & \text{if $\alpha_U = +1$,} \\
	\fsp(V) & \text{if $\alpha_U = -1$.}
	\end{cases}
	\]
\end{lemma}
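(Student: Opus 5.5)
Since $\fs$ is generated as a Lie algebra by the set $S$ of reflections, and $\osp(V)$ is a Lie subalgebra of $\gl(V)$, the plan is to check that $\rho_V(s) \in \osp(V)$ for every $s \in S$. The set $\{x \in \fs : \rho_V(x) \in \osp(V)\}$ is then a Lie subalgebra of $\fs$ containing $S$, so it equals $\fs$. By Remark~\ref{remark:s-generated-by-S0} it would even suffice to check simple reflections, but the argument is uniform over $S$. We also check that $\osp(V)$ is closed under brackets, which follows from the standard computation $([x,y].u|v) = -(u|[x,y].v)$ obtained by applying the defining identity twice.

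\textbf{Invariance identity.} The key step is the following identity for every $g \in G$ and $u,v \in V$:
	\[
	(g.u \,|\, g.v) = \ve(g) \cdot (u|v).
	\]
It suffices to verify this on the real form $U$ and extend bilinearly. There
	\[
	(g.u|g.v)' = \subgrp{\phi(g.u), g.v} = \ve(g) \subgrp{g.\phi(u), g.v} = \ve(g)\subgrp{\phi(u),v} = \ve(g)(u|v)'.
	\]
The second equality uses the associator property \eqref{eq:associator}, and the third uses the $G$-invariance of $\subgrp{-,-}$.

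\textbf{Reflections lie in $\osp(V)$.} Now let $s \in S$, so $\ve(s) = -1$ and $s^2 = 1$. Replacing $v$ by $s.v$ in the identity gives
	\[
	(s.u|v) = (s.u \,|\, s.(s.v)) = -(u|s.v).
	\]
This says $(s.u|v) + (u|s.v) = 0$, so $\rho_V(s) \in \osp(V)$, and the closure argument above finishes the proof.

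\textbf{Identifying $\osp(V)$.} By the preceding discussion, $(-|-)$ is non-degenerate, and it is symmetric when $\alpha_U = +1$ and antisymmetric when $\alpha_U = -1$. The stabilizer of such a form is by definition $\fso(V)$ or $\fsp(V)$ respectively, which gives the displayed case description.

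There is no serious obstacle here. The only point needing care is the sign bookkeeping in the invariance identity, namely that $\ve$ enters exactly once, via the associator.
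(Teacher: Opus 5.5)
Your proof is correct and follows essentially the same route as the paper, which computes directly $(s.u|v)' = \subgrp{\phi(s.u),v} = -\subgrp{s.\phi(u),v} = -\subgrp{\phi(u),s.v} = -(u|s.v)'$ from the associator property and $G$-invariance; you instead first prove the twisted invariance $(g.u|g.v) = \ve(g)(u|v)$ for all $g \in G$ and then specialize to $s$ using $s^2 = 1$. Your explicit check that $\osp(V)$ is closed under brackets supplies what the paper compresses into extending ``by linearity'' from $S$ to $\fs$.
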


\begin{proof}
If $u,v \in U$ and $s \in S$, then
	\[
	(s.u|v)' = \subgrp{\phi(s.u),v} = -\subgrp{s.\phi(u),v} = - \subgrp{\phi(u),s^{-1}.v} = - \subgrp{\phi(u),s.v} = -(u|s.v)'.
	\]
Extending scalars from $\R$ to $\C$, this implies first for all $s \in S$, and then for all $x \in \fs$ by linearity, that $(x.u|v) + (u|x.v) = 0$ for all $u,v \in V$.
\end{proof}

\begin{remark} \label{rem:Ve-V-even-dim}
If $V \in \Irr(G)$ and $V \cong V^\ve$, then $\dim(V)$ is even, e.g., by Lemma \ref{lemma:classical-Clifford}\eqref{item:type-F-simple}. Say $\dim(V) = 2r$. Then $\osp(V)$ is a simple Lie algebra of type $C_r$ and dimension $2r^2 +r$ if $\alpha_U = -1$, and is a simple Lie algebra of type $D_r$ (and dimension $2r^2-r$, for $r \geq 3$) if $\alpha_U = 1$.
\end{remark}

The next lemma is a partial converse to the construction in this section:

\begin{lemma} \label{lemma:form-implies-dual}
Let $V \in \Irr(G)$ with $\dim(V) > 1$, and suppose the set of reflections in $G$ consists of a single conjugacy class. If there exists a non-degenerate $\fD(\fs)$-invariant bilinear form on $V$, then $V \cong V^\ve$ as $\CG$-modules.
\end{lemma}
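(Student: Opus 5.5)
A non-degenerate $\fD(\fs)$-invariant bilinear form on $V$ is the same thing as an isomorphism of $\fD(\fs)$-modules $V \cong V^{*,\Lie}$, namely $v \mapsto (v|-)$. Invariance means $(x.u|v) = -(u|x.v)$ for $x \in \fD(\fs)$, which is exactly the statement that this map intertwines the action on $V$ with the contragredient action. Non-degeneracy makes the map bijective. So the plan is to convert the hypothesis into such an isomorphism, use Corollary \ref{cor:tensor-sign-restriction} to rewrite the target as a restriction of a $\CG$-module, and then apply Proposition \ref{prop:iso-equivalent} to lift the isomorphism back to $\CG$.

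The steps, in order, are as follows.
\begin{enumerate}
\item Produce the $\fD(\fs)$-module isomorphism $V \cong V^{*,\Lie}$ from the form, as described above.
\item By Corollary \ref{cor:tensor-sign-restriction}, $\Res^{\CG}_{\fs}(V^\ve) \cong V^{*,\Lie}$ as $\fs$-modules. Restricting further to $\fD(\fs)$ gives $V \cong V^\ve$ as $\fD(\fs)$-modules.
\item Note that $V^\ve$ is again a simple $\CG$-module, and $\dim(V^\ve) = \dim(V) > 1$.
\item Since $S$ consists of a single conjugacy class, the implication \eqref{item:D(g)-iso}$\Rightarrow$\eqref{item:G-iso} of Proposition \ref{prop:iso-equivalent} applies to $V_1 = V$ and $V_2 = V^\ve$. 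This yields $V \cong V^\ve$ as $\CG$-modules.
\end{enumerate}

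The only point that needs care is the bookkeeping in the first step: one must check that the identification $V \to V^*$ intertwines the Lie action on $V$ with the contragredient Lie action on $V^*$, rather than with the group-dual action. This is immediate from the sign convention $(x.\phi)(v) = -\phi(x.v)$. All the real content is in Proposition \ref{prop:iso-equivalent}, and that is exactly where the single-conjugacy-class and $\dim > 1$ hypotheses are used.
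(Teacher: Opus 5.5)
Your proposal is correct and is essentially the paper's own proof: the form yields a $\fD(\fs)$-module isomorphism $V \cong V^{*,\Lie}$ via $v \mapsto (v|-)$. Corollary \ref{cor:tensor-sign-restriction} then identifies $V^{*,\Lie}$ with $V^\ve$, and Proposition \ref{prop:iso-equivalent} lifts the resulting $\fD(\fs)$-isomorphism $V \cong V^\ve$ to an isomorphism of $\CG$-modules.
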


\begin{proof}
Suppose there exists a non-degenerate $\fD(\fs)$-invariant bilinear form $(-|-)$ on $V$. Thus for all $x \in \fD(\fs)$ and all $u,v \in V$, one has $(x.u|v) = -(u|x.v)$. Then the map $v \mapsto (v|-)$ defines a $\fD(\fs)$-module isomorphism $V \cong V^{*,\Lie}$. Since $V^{*,\Lie} \cong V^\ve$ as $\fs$-modules by Corollary \ref{cor:tensor-sign-restriction}, we have $V \cong V^\ve$ as $\fD(\fs)$-modules, and hence $V \cong V^\ve$ as $\CG$-modules by Proposition \ref{prop:iso-equivalent}.
\end{proof}

\subsubsection{Exterior powers of the reflection representation} \label{sububsec:reflection-rep}

Let $V$ be a $\CG$-module, and let
	\[
	\rho_{\Grp}^{\wedge^d}: \fs \to \End(\Lambda^d(V)) \qquad \text{and} \qquad \rho_{\Lie}^{\wedge^d}: \fs \to \End(\Lambda^d(V))
	\]
be the $\fs$-module structure maps for $\Res^{\CG}_{\fs}(\Lambda^d(V))$ and $\Lambda^d(\Res^{\CG}_{\fs}(V))$, respectively. Then for $s \in S$ and $v_1,v_2,\ldots,v_d \in V$, one has
	\begin{gather}
	\rho_{\Grp}^{\wedge^d}(s)(v_1 \wedge v_2 \wedge \cdots \wedge v_d) = (s.v_1) \wedge (s.v_2) \wedge \cdots \wedge (s.v_d), \label{eq:rho-grp-action} \\
	\textstyle \rho_{\Lie}^{\wedge^d}(s)(v_1 \wedge v_2 \wedge \cdots \wedge v_d) = \sum_{i=1}^d v_1 \wedge \cdots \wedge (s.v_i) \wedge \cdots \wedge v_d.
	\end{gather}
Of course, \eqref{eq:rho-grp-action} is true as written for any $s \in G$, not just for $s \in S$.

Now recall the reflection representation $V = V_{\refl}$ of $G$. Let $(G,S_0)$ be a Coxeter presentation for $G$ as in Section \ref{subsec:generalities}, with $S_0 = \{s_1,\ldots,s_n\}$. Then $V$ admits a basis $\{\alpha_1,\ldots,\alpha_n\}$ on which the simple reflections act by $s_i .\alpha_j = \alpha_j + 2 \cdot \cos(\pi / m_{i,j} ) \cdot \alpha_i$. Set $b(\alpha_i,\alpha_j) = -\cos(\pi/m_{i,j})$. Then $b$ extends bilinearly to a symmetric bilinear form $b(-,-)$ on $V$, and the action of $s_i$ on an arbitrary vector $v \in V$ is given by $s_i . v = v - 2 \cdot b(\alpha_i,v) \alpha_i$.

\begin{theorem}[{\cite[5.1.4]{Geck:2000}}] \label{thm:exterior-powers-reflection-rep}
Let $(G,S_0)$ be an irreducible finite Coxeter system of rank $n = \abs{S_0}$, and let $V$ be the corresponding reflection representation of $G$. For $0 \leq d \leq n$, the $\CG$-modules $\Lambda^d(V)$ are irreducible and pairwise non-isomorphic.
\end{theorem}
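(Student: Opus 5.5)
The plan is the classical argument from Geck--Pfeiffer. Write $V = V_{\refl}$ and $n = \dim V$. The proof has two parts: show first that $V$ is irreducible and that $V^G = 0$, and then, by an induction on the rank using a parabolic subgroup, that each $\Lambda^d(V)$ is irreducible. At the end we separate the different $d$.

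\textbf{Step 1: $V$ itself.} Since $G$ is irreducible, the form $b$ is positive definite and the Coxeter graph is connected. Let $W \subseteq V$ be a nonzero $G$-stable subspace. For each $i$ and each $w \in W$ we have
\[
s_i.w - w = -2\, b(\alpha_i, w)\, \alpha_i \in W .
\]
So either $\alpha_i \in W$, or $W$ lies in the hyperplane $\alpha_i^\perp$. If $W \subseteq \alpha_i^\perp$ for every $i$, then $W = 0$, which is impossible. Hence some $\alpha_i \in W$. If $j$ is adjacent to $i$ in the graph, then $s_j.\alpha_i - \alpha_i$ is a nonzero multiple of $\alpha_j$, so $\alpha_j \in W$. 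By connectedness every $\alpha_j$ lies in $W$, and thus $W = V$. The same computation shows that a $G$-fixed vector must be orthogonal to every $\alpha_i$, so $V^G = 0$.

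\textbf{Step 2: the exterior powers $\Lambda^d(V)$, $0<d<n$.} Take $s = s_1$, chosen so that $\alpha_1$ corresponds to an end node of the graph; then the parabolic subgroup $H = \langle s_2,\dots,s_n\rangle$ is again irreducible. As a $\CH$-module, $V = H' \oplus L$, where:
\begin{itemize}
\item $H' = \Span\{\alpha_2,\dots,\alpha_n\}$ is the reflection representation of $H$;
\item $L = (H')^{\perp}$ is a line on which $H$ acts trivially.
\end{itemize}
Note that $L$ is not orthogonal to $\alpha_1$. This gives the decomposition
\[
\Res^G_H \Lambda^d(V) \cong \Lambda^d(H') \oplus \Lambda^{d-1}(H').
\]
By induction on the rank, both summands are irreducible. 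They are non-isomorphic because $d \neq d-1$ and, by induction, exterior powers of different degree are non-isomorphic for $H$. (Rank $1$ is trivial, and the degenerate edge cases $d=n$ or $d-1=0$ are handled directly.) It follows that any proper nonzero $G$-submodule $W$ of $\Lambda^d(V)$ must equal one of the two summands as an $H$-module. To rule this out, I would show that $s_1$ does not preserve either summand. Concretely, $s_1$ acts on $\Lambda^d(V)$ as $\id - 2\,\alpha_1 \wedge \iota_{b(\alpha_1,-)}$, and one exhibits a vector of $\Lambda^d(H')$, for instance $\alpha_2 \wedge \dots$ built along the chain starting at node $2$, whose image under $s_1$ has a nonzero component in $\Lambda^{d-1}(H') \wedge L$. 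Similarly in the other direction. This mixing computation is the main obstacle. I would handle it by choosing the basis vectors so that the relevant contraction $b(\alpha_1,\alpha_2) \neq 0$ is the only surviving term.

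\textbf{Step 3: pairwise non-isomorphism.} The modules $\Lambda^d(V)$ have dimension $\binom{n}{d}$, so the only possible coincidences are $\Lambda^d(V) \cong \Lambda^{n-d}(V)$. To rule these out, compare characters at a reflection $s$. Since $s$ has eigenvalues $(-1,1,\dots,1)$ on $V$, the value of the character of $\Lambda^d(V)$ at $s$ is
\[
\binom{n-1}{d} - \binom{n-1}{d-1}.
\]
This expression determines $d$ within the pairs $\{d, n-d\}$ unless it vanishes, which happens only for $n = 2d$; but in that case the two modules coincide anyway. So the characters at a reflection distinguish all the cases that are not the same module.
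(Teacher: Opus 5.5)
Your proof is correct. The paper gives no argument of its own but imports Steinberg's theorem from Geck--Pfeiffer, and your approach is the standard induction-on-rank proof of that result: restrict to the end-node parabolic $H$, where $\Lambda^d(H')\oplus\Lambda^{d-1}(H')$ is multiplicity-free, show that $s_1$ moves the summands, and separate the $\Lambda^d(V)$ by their dimensions and their character values at a reflection.

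The mixing step you flag is easy. Write $\alpha_1=\beta+c\ell$ with $\beta\in H'$, $\ell\in L$, $c\neq 0$, and take $3\le j_2<\cdots<j_d\le n$; then the $\ell\wedge\Lambda^{d-1}(H')$-component of $s_1(\alpha_2\wedge\alpha_{j_2}\wedge\cdots\wedge\alpha_{j_d})$ is $-2c\,b(\alpha_1,\alpha_2)\,\ell\wedge\alpha_{j_2}\wedge\cdots\wedge\alpha_{j_d}\neq 0$. By Maschke this one direction already suffices, since a $G$-stable complement to $\ell\wedge\Lambda^{d-1}(H')$ would have to equal $\Lambda^d(H')$.
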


In the special case $V = V_{\refl}$, the group-theoretic and the Lie-theoretic actions of $\fD(\fs)$ on the exterior powers $\Lambda^d(V)$ are the same:

\begin{lemma} \label{lemma:ext-powers-refl-rep}
Let $(G,S_0)$ be an irreducible Coxeter system of rank $n = \abs{S_0}$, and let $V = V_{\refl}$ be the reflection representation of $G$. Then for $0 \leq d \leq n$, one has
	\[
	\Res^{\CG}_{\fD(\fs)}(\Lambda^d(V)) = \Lambda^d(\Res^{\CG}_{\fD(\fs)}(V)).
	\]
\end{lemma}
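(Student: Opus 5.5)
The plan is to compare the two $\fs$-module structure maps $\rho_{\Grp}^{\wedge^d}$ and $\rho_{\Lie}^{\wedge^d}$ on $\Lambda^d(V)$ directly on the generators $s \in S$. I will show that they differ there by a scalar multiple of the identity. Then I will use that both maps are Lie algebra homomorphisms to conclude that they agree on $\fD(\fs) = [\fs,\fs]$.

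\textbf{Step 1 (the generators).} Every reflection $s \in S$ is conjugate in $G$ to a simple reflection $s_i$. Hence it acts on $V$ by $s.v = v - 2b(\alpha,v)\alpha$ for some root vector $\alpha$ with $b(\alpha,\alpha) = 1$; alternatively, one can work only with $S_0$, using Remark~\ref{remark:s-generated-by-S0}. Write $s = \id_V + N$, where $N = -2\,\alpha \otimes b(\alpha,-)$ has rank one.

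Expanding $(s.v_1)\wedge\cdots\wedge(s.v_d) = (v_1 + Nv_1)\wedge\cdots\wedge(v_d+Nv_d)$, every term containing two or more factors of the form $Nv_i$ vanishes, because those factors are all multiples of $\alpha$. Therefore
\[
\rho_{\Grp}^{\wedge^d}(s) = \id + \textstyle\sum_i (\text{$N$ in slot $i$}) = \id + \rho_{\Lie}^{\wedge^d}(s) - d\cdot \id .
\]
The last equality holds because $\rho_{\Lie}^{\wedge^d}(s) = \sum_i(\text{$s$ in slot $i$}) = d\cdot\id + \sum_i(\text{$N$ in slot $i$})$. Consequently
\[
\rho_{\Grp}^{\wedge^d}(s) - \rho_{\Lie}^{\wedge^d}(s) = (1-d)\,\id \quad \text{for all } s \in S.
\]
This rank-one wedge computation is the only real content, and I expect it to be the main step; the remaining steps are formal.

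\textbf{Step 2 (a subalgebra argument).} Let
\[
\fs' = \{x \in \fs : \rho_{\Grp}^{\wedge^d}(x) - \rho_{\Lie}^{\wedge^d}(x) \in \C\cdot\id\}.
\]
This set is a linear subspace. Both $\rho_{\Grp}^{\wedge^d}$ and $\rho_{\Lie}^{\wedge^d}$ are Lie algebra homomorphisms: the first is the restriction of an associative algebra representation of $\CG$, and the second is the exterior power of a Lie module. Hence if $\rho_{\Grp}^{\wedge^d}(x) = \rho_{\Lie}^{\wedge^d}(x) + a\,\id$ and $\rho_{\Grp}^{\wedge^d}(y) = \rho_{\Lie}^{\wedge^d}(y) + b\,\id$, then
\[
\rho_{\Grp}^{\wedge^d}([x,y]) = [\rho_{\Lie}^{\wedge^d}(x) + a\,\id,\ \rho_{\Lie}^{\wedge^d}(y) + b\,\id] = \rho_{\Lie}^{\wedge^d}([x,y]),
\]
since scalar multiples of the identity are central. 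Thus $\fs'$ is closed under brackets, and the two maps agree exactly on every bracket of elements of $\fs'$.

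\textbf{Step 3 (conclusion).} By Step~1, $S \subseteq \fs'$. Since $\fs'$ is a subalgebra and $\fs$ is generated by $S$, we get $\fs' = \fs$. By Step~2, the two actions then coincide on $[\fs,\fs] = \fD(\fs)$, which gives $\Res^{\CG}_{\fD(\fs)}(\Lambda^d(V)) = \Lambda^d(\Res^{\CG}_{\fD(\fs)}(V))$. The case $d=0$ is trivial, since $\fD(\fs)$ acts by zero on $\Lambda^0(V)$ in both structures.
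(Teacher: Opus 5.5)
Your proof is correct and follows the same skeleton as the paper. You establish $\rho_{\Lie}(s) = \rho_{\Grp}(s) + (d-1)\cdot\Id$ on generators and then use that the discrepancy is central to get equality on $\fD(\fs)$; your Step~2 spells out the closure argument that the paper compresses into one sentence.

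Where you differ is in how the identity on generators is verified.
\begin{itemize}
\item The paper works only with simple reflections. It takes $s_k$ at a leaf of the Coxeter graph and checks the identity on wedge monomials $\alpha_{i_1}\wedge\cdots\wedge\alpha_{i_d}$ of simple roots, in three cases according to how the $\alpha_{i_j}$ pair with $\alpha_k$ under $b$. It then transports the identity to an arbitrary $s_i$ by conjugating with some $g\in G$. This uses the shape of the Coxeter graph and the fact that every simple reflection is conjugate to a leaf one.
\item Your argument rests on the observation that $s-\id_V$ has rank one, so every term with two or more copies of $N$ dies in $\Lambda^d(V)$. This handles every reflection at once and is basis-free.
\end{itemize}

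What your route buys: it needs no leaf or conjugacy step and never uses irreducibility. So it proves the statement for an arbitrary finite reflection group. It also adapts immediately to sign-twisted variants such as the one used later for $F_4$ with $\ve'$. There $s$ acts by $\pm(\id_V+N)$, and the constant simply becomes $\pm(d-1)$, which is still central.
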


\begin{proof}
The argument is a generalization of the proof of \cite[Lemme 12]{Marin:2007}. Fix $0 \leq d \leq n$, and let
	\[
	\rho_{\Grp} = \rho_{\Grp}^{\wedge^d}: \fs \to \End(\Lambda^d(V)) \qquad \text{and} \qquad \rho_{\Lie} = \rho_{\Lie}^{\wedge^d}: \fs \to \End(\Lambda^d(V))
	\]
be the group- and Lie-theoretic module structure maps for the action of $\fs$ on $\Lambda^d(V)$. We will show for all $s \in S_0$ that $\rho_{\Lie}(s) = \rho_{\Grp}(s) + (d-1) \cdot \Id_{\Lambda^d(V)}$. Since the simple reflections generate $\fs$ by Remark \ref{remark:s-generated-by-S0}, and since $(d-1) \cdot \Id_{\Lambda^d(V)}$ is a central operator in $\End(\Lambda^d(V))$, this will imply that the restrictions of $\rho_{\Grp}$ and $\rho_{\Lie}$ to the derived algebra $\fD(\fs)$ are equal.

First let $s_k \in S_0$ be a simple reflection such that the corresponding vertex in the Coxeter diagram of $G$ is adjacent to one other vertex. Let $\ul{\alpha} = \alpha_{i_1} \wedge \cdots \wedge \alpha_{i_d} \in \Lambda^d(V)$ for distinct indices $1 \leq i_1,\ldots,i_d \leq n$. We consider the values $\rho_{\Grp}(s_k)(\ul{\alpha})$ and $\rho_{\Lie}(s_k)(\ul{\alpha})$. Reordering the factors in $\ul{\alpha}$ if necessary, there are three basic cases to consider:
	\begin{enumerate}
	\item $b(\alpha_k,\alpha_{i_j}) = 0$ for all $1 \leq j \leq d$. 

	\item $b(\alpha_k,\alpha_{i_1}) \neq 0$ and $b(\alpha_k,\alpha_{i_j}) = 0$ for all $2 \leq j \leq d$. 

	\item $i_1 = k$, $b(\alpha_k,\alpha_{i_2}) \neq 0$, and $b(\alpha_k,\alpha_{i_j}) = 0$ for all $3 \leq j \leq d$. 
	\end{enumerate}
In each case it is straightforward to check that $\rho_{\Lie}(s_k)(\ul{\alpha}) = \rho_{\Grp}(s_k)(\ul{\alpha}) + (d-1) \cdot \ul{\alpha}$, and hence $\rho_{\Lie}(s_k) = \rho_{\Grp}(s_k) + (d-1) \cdot \Id_{\Lambda^d(V)}$ as operators on $\Lambda^d(V)$. Now for an arbitrary $s_i \in S_0$, we can first choose a simple reflection $s_k$ as above and an element $g \in G$ such that $g s_k g^{-1} = s_i$; this is possible because all (simple) reflections of the same root length are conjugate in $G$. Then
	\begin{align*}
	\rho_{\Lie}(s_i) &= \rho_{\Grp}(g) \circ \rho_{\Lie}(s_k) \circ \rho_{\Grp}(g^{-1}) \\
	&= \rho_{\Grp}(g) \circ \big( \rho_{\Grp}(s_k) + (d-1) \cdot \Id \big) \circ \rho_{\Grp}(g^{-1}) \\
	&= \rho_{\Grp}(s_i) + (d-1) \cdot \Id. \qedhere
	\end{align*}
\end{proof}

Let $\Delta_{d}: \fsl(V) \to \fsl(\Lambda^d(V))$ be the Lie algebra homomorphism corresponding to the action by derivations of $\fsl(V)$ on $\Lambda^d(V)$.

\begin{corollary} \label{cor:exterior-powers-factorization}
Let $(G,S_0)$ be an irreducible Coxeter system of rank $n = \abs{S_0}$, and let $V$ be the corresponding reflection representation of $G$. Then for $0 \leq d \leq n$, the module structure map $\rho_{\Lambda^d(V)}: \fD(\fs) \to \fsl(\Lambda^d(V))$ admits the Lie algebra factorization
	\begin{equation} \label{eq:linear-factorizations}
	\vcenter{\xymatrix{
	\fD(\fs) \ar@{->>}[r]^{\rho_V} \ar@{->}[drr]_{\rho_{\Lambda^d(V)}} & \fs_V \ar@{->}[r]^{\subseteq} & \fsl(V) \ar@{->}[d]^{\Delta_d} \\
	& & \fsl(\Lambda^d(V))
	}}.
	\end{equation}
\end{corollary}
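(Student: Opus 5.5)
The plan is to read the factorization off Lemma~\ref{lemma:ext-powers-refl-rep}. That lemma identifies the $\fD(\fs)$-module $\Res^{\CG}_{\fD(\fs)}(\Lambda^d(V))$ with $\Lambda^d(\Res^{\CG}_{\fD(\fs)}(V))$ on the same underlying space $\Lambda^d(V)$. In terms of structure maps, this says that for every $x \in \fD(\fs)$ the operator $\rho_{\Lambda^d(V)}(x)$ equals the derivation action of $\rho_V(x) \in \End(V)$. That is,
\[
\rho_{\Lambda^d(V)}(x)(v_1 \wedge \cdots \wedge v_d) = \sum_{i=1}^d v_1 \wedge \cdots \wedge \rho_V(x)(v_i) \wedge \cdots \wedge v_d .
\]
The right-hand side is precisely the value at $v_1 \wedge \cdots \wedge v_d$ of $\Delta_d(\rho_V(x))$, with $\Delta_d$ extended to all of $\gl(V)$ by the same formula. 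Hence $\rho_{\Lambda^d(V)} = \Delta_d \circ \rho_V$ on $\fD(\fs)$.

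The proof then reduces to a few bookkeeping checks.

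First, $\rho_V$ maps $\fD(\fs)$ into $\fsl(V)$, since it carries commutators to commutators and these have trace zero. By definition its image is $\fs_V$, so the top row of the diagram \eqref{eq:linear-factorizations} is a surjection followed by the inclusion $\fs_V \subseteq \fsl(V)$.

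Second, the derivation action $\gl(V) \to \gl(\Lambda^d(V))$ is a Lie algebra homomorphism, which is standard functoriality of exterior powers for Lie algebra representations. It therefore sends the derived algebra $\fsl(V)$ into $\fsl(\Lambda^d(V))$, which justifies the codomain of $\Delta_d$.

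Third, the composite $\Delta_d \circ \rho_V$ is a Lie homomorphism, and by Lemma~\ref{lemma:ext-powers-refl-rep} it agrees with $\rho_{\Lambda^d(V)}$. This gives commutativity of the diagram.

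The cases $d = 0$ and $d = n$ need no separate treatment. Here $\Lambda^d(V)$ is one-dimensional, so both paths are $0$: the target $\fsl(\Lambda^d(V))$ is itself zero.

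I expect no real obstacle. All the substance lies in Lemma~\ref{lemma:ext-powers-refl-rep}, namely the identity $\rho_{\Lie}(s) = \rho_{\Grp}(s) + (d-1)\Id$ and the fact that central shifts vanish on $\fD(\fs)$. The only care needed is to make explicit that "the same module structure on the same space" means the structure maps coincide as maps $\fD(\fs) \to \End(\Lambda^d(V))$, and that the Lie-theoretic exterior power structure map factors through $\rho_V$ by its very definition.
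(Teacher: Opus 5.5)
Your proposal is correct and takes essentially the same approach as the paper, which states the corollary as an immediate consequence of Lemma~\ref{lemma:ext-powers-refl-rep}: the equality of the group-theoretic and Lie-theoretic structure maps on $\fD(\fs)$ says exactly that $\rho_{\Lambda^d(V)} = \Delta_d \circ \rho_V$. Your bookkeeping checks are all sound: $\rho_V(\fD(\fs)) \subseteq \fsl(V)$, $\Delta_d$ is a Lie homomorphism landing in $\fsl(\Lambda^d(V))$, and the cases $d \in \{0,n\}$ are trivial.
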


\begin{remark} \label{rem:Delta-d-image}
Let the notation and assumptions be as in Corollary \ref{cor:exterior-powers-factorization}, and suppose $\fs_V = \fsl(V)$. Then for $0 < d < n$, the simplicity of $\fsl(V)$ as a Lie algebra implies that
	\[
	\fs_V = \fsl(V) \cong \im(\Delta_d) = \im(\rho_{\Lambda^d(V)}).
	\]
On the other hand, if $d \in \set{0,n}$, then $\dim(\Lambda^d(V)) = 1$, and hence
	\[
	\fs_{\Lambda^d(V)} = \im(\rho_{\Lambda^d(V)}) = \im(\Delta_d) = \fsl(\Lambda^d(V)) = 0.
	\]
\end{remark}

\subsubsection{Summary}\label{SS:summary-and-injection}

Let $V_{\refl}$ be the reflection representation of $G$ as in Section \ref{sububsec:reflection-rep}, and let
	\[
	\Irr^{> 1}(G) = \set{ V \in \Irr(G) : \dim(V) > 1}.
	\]
Define subsets $E$ and $F$ of $\Irr^{> 1}(G)$ by
	\begin{equation} \label{eq:E-F-Irr} 
	\begin{split}
	E = E(G) &= \{ V \in \Irr^{> 1}(G): V \not\cong V^\ve \text{ and } V \not\cong \Lambda^d(V_{\refl}) \text{ for all $d$} \}, \\
	F = F(G) &= \{ V \in \Irr^{> 1}(G) : V \cong V^\ve \text{ and } V \not\cong \Lambda^d(V_{\refl}) \text{ for all $d$} \}. 
	\end{split}
	\end{equation}
We may refer to the elements of these sets as simple modules of types $E$ and $F$, respectively. Let $\sim$ be the equivalence relation on $\Irr(G)$ generated by $V \sim V^\ve$. Then it follows from \eqref{eq:dual-pair-iso}, Lemma \ref{lemma:osp-factorization}, and Corollary \ref{cor:exterior-powers-factorization} that the Lie algebra homomorphism \eqref{eq:D(s)-Artin-Wedderburn} restricts further to an injective Lie algebra map
	\begin{equation} \label{eq:D(s)-factorization}
	\fD(\fs) \hookrightarrow \fsl(V_{\refl}) \oplus \Big[ \bigoplus_{V \in E/\sim} \fsl(V) \Big] \oplus \Big[ \bigoplus_{V \in F} \osp(V) \Big],
	\end{equation}
where the sum over $E/\!\!\sim$ means that we take the sum over any complete set of representatives in $E$ for the distinct equivalence classes under the relation $\sim$. In some cases \eqref{eq:D(s)-factorization} is an isomorphism, but in general the image of $\fD(\fs)$ is strictly smaller.

\subsection{Simple ideals}

Given $V \in \Irr(G)$ with module structure map $\rho_V : \CG \to \End(V)$, let $\fs^V$ be the orthogonal complement in $\fD(\fs)$ of $\ker(\rho_V|_{\fD(\fs)})$ with respect to the Killing form on $\fD(\fs)$. Then $\fs^V$ is an ideal in $\fD(\fs)$ such that $\fD(\fs) = \fs^V \oplus \ker(\rho_V|_{\fD(\fs)})$ as Lie algebras, and $\rho_V$ induces a Lie algebra isomorphism $\fs^V \cong \rho_V(\fD(\fs)) = \fs_V$. The next lemma extrapolates arguments from the proof of \cite[Proposition 5.10.1]{DK:2026a}, which in turn was based on the arguments in \cite[\S6.7]{Marin:2007}.

\begin{lemma} \label{lemma:distinct-simple-ideals}
Let $E$ and $F$ be the subsets of $\Irr(G)$ defined in \eqref{eq:E-F-Irr}, and let $X$ be a subset of $\Irr^{> 1}(G)$ with the following properties:
	\begin{enumerate}
	\item \label{item:V-then-no-Ve} If $V \in X \cap E$, then $V^\ve \notin X$, i.e., $X$ does not contain the isomorphism class of $V^\ve$.
	\item If $V \in X \cap E$, then $\fs_V = \fsl(V)$ and $\dim(V) \geq 3$.
	\item If $V \in X \cap F$, then $\fs_V \cong \osp(V)$ and $\dim(V) \geq 8$.
	\item \label{item:at-most-1-dim8} There is at most one $V \in X \cap F$ such that $\dim(V) = 8$.
	\end{enumerate}
Suppose also that the set $S$ of reflections in $G$ consists of a single conjugacy class in $G$. Then the set $\smallset{ \fs^V : V \in X}$ consists of pairwise distinct simple ideals in $\fD(\fs)$, and $\sum_{V \in X} \fs^V = \bigoplus_{V \in X} \fs^V$.
\end{lemma}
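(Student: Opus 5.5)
Each $\fs^V$ with $V \in X$ is a simple ideal, because $\fs^V \cong \fs_V$ and $\fs_V$ is simple in every case allowed by the hypotheses. If $V \in X \cap E$, then $\fs_V = \fsl(V)$ with $\dim V \geq 3$. If $V \in X \cap F$, then $\fs_V \cong \osp(V)$ with $\dim V = 2r \geq 8$, so $r \geq 4$; by Remark~\ref{rem:Ve-V-even-dim} this is of type $C_r$ or $D_r$ and simple. Since $\fD(\fs)$ is semisimple (Lemma~\ref{lemma:g-reductive}), it is the direct sum of its simple ideals. Any sum of pairwise distinct simple ideals is therefore direct. So the whole statement reduces to showing that $\fs^V \neq \fs^W$ for distinct $V, W \in X$.

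Suppose, for contradiction, that $\fs^V = \fs^W =: \fa$ for some $V \not\cong W$ in $X$. The plan is to show that $V$ and $W$ are then isomorphic, or dual, as $\fD(\fs)$-modules, and to derive a contradiction from each possibility.

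Every ideal of $\fD(\fs)$ other than $\fa$ lies in $\ker \rho_V$ and in $\ker \rho_W$. Hence $V$ and $W$, viewed as $\fD(\fs)$-modules, are pulled back from $\fa$ along the projection $\fD(\fs) \to \fa$. They are irreducible by Lemma~\ref{lemma:simples-remain-simple}, and faithful as $\fa$-modules. Moreover $\dim V = \dim W$, because $\fs_V \cong \fa \cong \fs_W$: in type $E$ we have $\dim \fsl(V) = n^2 - 1$, and in type $F$ the simple algebras $\fso_{2r}$ and $\fsp_{2r}$ determine $2r$.

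A type-$E$ module cannot be paired with a type-$F$ module. Type $A_{n-1}$ with $n \geq 3$ is not isomorphic to $C_r$ or $D_r$ with $r \geq 4$, and the low-rank coincidences $D_3 = A_3$ and $C_2 = B_2$ are exactly what the bounds $\dim \geq 3$ and $\dim \geq 8$ rule out.

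In the type-$E$ case, $\fa \cong \fsl_n$ with $n \geq 3$. Its faithful irreducible $n$-dimensional representations are the natural representation and its dual, up to automorphism; more precisely, each is obtained from the other by an automorphism of $\fsl_n$. The point is that $V$ and $W$ are both given by isomorphisms $\fa \to \fsl(V)$ and $\fa \to \fsl(W)$. Composing one with the inverse of the other gives an isomorphism $\fsl(V) \to \fsl(W)$. Every automorphism of $\fsl_n$ is inner or inner composed with $-$transpose, so $W \cong V$ or $W \cong V^{*,\Lie}$ as $\fD(\fs)$-modules.
\begin{itemize}
\item If $W \cong V$, then Proposition~\ref{prop:iso-equivalent} gives $W \cong V$ as $\CG$-modules, contradicting $V \not\cong W$. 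This step uses the single-conjugacy-class hypothesis.
\item If $W \cong V^{*,\Lie}$, then Corollary~\ref{cor:tensor-sign-restriction} gives $W \cong V^\ve$ as $\fD(\fs)$-modules, and Proposition~\ref{prop:iso-equivalent} upgrades this to $W \cong V^\ve$ as $\CG$-modules. This contradicts hypothesis~\eqref{item:V-then-no-Ve}.
\end{itemize}

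In the type-$F$ case, $\fa \cong \fso_{2r}$ or $\fsp_{2r}$. For $\fsp_{2r}$, and for $\fso_{2r}$ with $r \geq 5$, every automorphism preserves the isomorphism class of the natural representation, up to the outer automorphism of $D_r$. That outer automorphism fixes the vector representation up to isomorphism, since it is conjugation by an element of $O_{2r}$. So $W \cong V$ as $\fD(\fs)$-modules, and Proposition~\ref{prop:iso-equivalent} gives a contradiction as before. The exceptional case is $\fso_8$, where triality moves the vector representation to the half-spin representations. Hypothesis~\eqref{item:at-most-1-dim8} exists precisely to exclude two $8$-dimensional modules in $X \cap F$, so this case does not arise.

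The main obstacle is making the automorphism argument clean: precisely, verifying that the composite isomorphism $\fs_V \cong \fa \cong \fs_W$ transports $V$ to $W$ as $\fa$-modules. This amounts to tracking the two maps $\rho_V|_{\fa}$ and $\rho_W|_{\fa}$ and invoking the classification of automorphisms of the classical Lie algebras.
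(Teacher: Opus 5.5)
Your proposal is correct and follows essentially the same route as the paper. Both arguments reduce the claim to showing $\fs^U \neq \fs^V$ for $U \not\cong V$ in $X$, and both separate the cases by the Lie type of the common simple ideal. In type $E$ you use the classification of automorphisms of $\fsl_n$, and in the orthogonal case you dispose of $D_4$/triality via hypothesis~\eqref{item:at-most-1-dim8}. The only difference is phrasing in the remaining $C_r$/$D_r$ case: you argue that every automorphism fixes the natural representation up to isomorphism, while the paper states the same fact as uniqueness of the $2r$-dimensional irreducible representation for $C_r$ ($r \geq 4$) and $D_r$ ($r \geq 5$).
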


\begin{proof}
Let $U,V \in X$, and suppose $\fs^U = \fs^V$. By the assumptions and Remark \ref{rem:Ve-V-even-dim}, $\fs^U$ and $\fs^V$ are simple Lie algebras of type $A_r$ with $r \geq 2$, or of type $C_r$ or $D_r$ with $r \geq 4$. These types are pairwise non-isomorphic, so $\fs^U$ and $\fs^V$ must be of the same Lie type.

First suppose $\fs^U$ and $\fs^V$ are of type $A_r$ with $r \geq 2$ (so $U,V \in X \cap E$). Then fixing bases for $U$ and $V$, one has $\fs_U = \fsl(\C^{r+1}) = \fs_V$, and the composite induced map
	\[
	\fsl(\C^{r+1}) = \fs_U \xrightarrow{\rho_U^{-1}} \fs^U = \fs^V \xrightarrow{\rho_V} \fs_V = \fsl(\C^{r+1})
	\]
is a Lie algebra automorphism. Lie algebra automorphisms of $\fsl(\C^{r+1})$ come in two forms:
	\begin{itemize}
	\item $X \mapsto gXg^{-1}$ for some $g \in GL(\C^{r+1})$, or
	\item $X \mapsto -(gX^t g^{-1})$ for some $g \in GL(\C^{r+1})$, where $X^t$ denotes the transpose of $X$;
	\end{itemize}
see \cite[IX.5]{Jacobson:1979}. If $\rho_V \circ \rho_U^{-1}$ is of the first form, then $U \cong V$ as $\fD(\fs)$-modules, and hence $U \cong V$ as $\CG$-modules, by Proposition \ref{prop:iso-equivalent}. Then $U = V$, because $\Irr(G)$ contains a single representative from each isomorphism class of simple $\CG$-modules. On the other hand, if $\rho_V \circ \rho_U^{-1}$ is of the second form, then $U \cong V^{*,\Lie}$ as $\fD(\fs)$-modules. Then $U \cong V^\ve$ as $\fD(\fs)$-modules, by Corollary \ref{cor:tensor-sign-restriction}, and hence $U \cong V^\ve$ by Proposition \ref{prop:iso-equivalent}. This contradicts assumption \eqref{item:V-then-no-Ve}, so we conclude that $\rho_V \circ \rho_U^{-1}$ must be of the first form, and $U = V$.

Now suppose $\fs^U = \fs^V$ is of type $C_r$ or $D_r$ with $r \geq 4$. If the ideal is of type $D_4$, then assumption \eqref{item:at-most-1-dim8} immediately implies that $U = V$. Now excluding type $D_4$, a simple Lie algebra of type $C_r$ with $r \geq 4$, or type $D_r$ with $r \geq 5$, has (up to isomorphism) a unique irreducible representation of dimension $2r$. This implies that $U \cong V$ as $\fD(\fs)$-modules, and hence $U \cong V$ as $\CG$-modules by Proposition \ref{prop:iso-equivalent}. Then $U = V$.

Thus, the set $\smallset{ \fs^V : V \in X}$ consists of pairwise distinct simple ideals in $\fD(\fs)$. This implies that the sum $\sum_{V \in X} \fs^V$ is a direct sum in $\fD(\fs)$.
\end{proof}

The last lemma of this section records a variation of \cite[Lemme 15]{Marin:2007}.

\begin{lemma} \label{lemma:simple-ideal-criterion}
Let $V$ be a finite-dimensional $\C$-vector space, and let $\fh \subset \fg$ be two semisimple Lie subalgebras of $\fsl(V)$ such that $\g$ acts irreducibly on $V$. Suppose also that there exist simple factors $\fh_1,\fh_2,\ldots,\fh_s$ in $\fh$ such that:
	\begin{enumerate}
	\item for each $j$, there exists an $\fh_j$-module composition factor $W_j$ in $V$ of multiplicity one;
	\item for each $j$, one has $\fh_j \cong \fsl(W_j)$; and
	\item $\big( \sum_{j=1}^s \dim(W_j) \big)^2 > \dim(V)$.
	\end{enumerate}
Then $\g$ is a simple Lie algebra.
\end{lemma}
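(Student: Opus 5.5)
Since $\g \subseteq \fsl(V)$ is semisimple and acts irreducibly on $V$, I will assume $\g$ is not simple and derive a contradiction with hypothesis (3). Write $\g = \g_1 \oplus \g_2$ as a direct sum of two nonzero semisimple ideals. An irreducible representation of a direct sum of semisimple Lie algebras is an outer tensor product. So $V \cong V_1 \boxtimes V_2$, where $V_i$ is an irreducible $\g_i$-module. Because $\g \subseteq \fsl(V)$ acts faithfully, each $\g_i$ acts faithfully on $V_i$. In particular $\dim V_i \ge 2$, and $\dim V = \dim V_1 \cdot \dim V_2$.

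Next I locate each simple factor $\fh_j$. Since $\fh_j$ is simple, its projections to $\g_1$ and $\g_2$ are either zero or injective. I would first check that $\fh_j$ cannot project nontrivially to both. If it did, $\fh_j$ would act on $V = V_1 \otimes V_2$ diagonally, through two nonzero representations $V_1|_{\fh_j}$ and $V_2|_{\fh_j}$. Take an $\fh_j$-composition factor $A$ of $V_1$ and $B$ of $V_2$; both are nontrivial for suitable choices. Then $A \otimes B$ contains several composition factors, and the composition factors of $V$ arise from tensor products of these. I want to show this forces every composition factor of $V$ to either have multiplicity greater than one or not be isomorphic to the standard module $W_j$ with $\fh_j \cong \fsl(W_j)$.

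Here is the intended mechanism. Any $\fh_j$-composition factor $U$ of $V_1$ contributes $U \otimes V_2$, of dimension $\ge \dim U \cdot 2$. The factor $W_j$ has multiplicity one, and $\fh_j \cong \fsl(W_j)$ acts on $W_j$ through its standard (defining) representation, which is a minimal-dimensional faithful representation. A standard module can appear as a constituent of $U \otimes U'$ with both $U, U'$ nontrivial only in special small cases. In any case, tensoring with a module of dimension $\ge 2$ tends to produce multiplicities.

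I expect this step to be the main obstacle. The cleanest route may be to avoid it: I would try to argue instead that, up to relabeling, each $\fh_j$ lies in $\g_1$ or $\g_2$, since $\fh$ is a semisimple subalgebra whose simple factors can be chosen compatibly. Alternatively, I can handle the diagonal case via weights, using the fact that a nontrivial $\fh_j$-module tensored with another nontrivial one has dimension at least $4$ times larger, and comparing with the multiplicity-one condition.

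Granting that $\fh_j \subseteq \g_1$ (say), $V$ restricted to $\fh_j$ is $V_1|_{\fh_j} \otimes \C^{\dim V_2}$. Hence every $\fh_j$-composition factor of $V$ appears with multiplicity divisible by $\dim V_2 \ge 2$, unless it is trivial. Hypothesis (1) gives a multiplicity-one factor $W_j$; by hypothesis (2) it is nontrivial, since $\fsl(W_j) \cong \fh_j \ne 0$ forces $\dim W_j \ge 2$. This is a contradiction. So if $\g$ splits, no $\fh_j$ lies in a single factor, and in the diagonal case multiplicity-one factors are constrained as above.

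Finally I combine the pieces with a dimension count. The distinct factors $\fh_j$ commute with each other and act on disjoint tensor slots. The $W_j$ therefore assemble so that $\bigotimes_j W_j$, or at least a product bound, fits inside $V_1$ or $V_2$. This gives $\prod \dim W_j \le \dim V_1$ or a similar bound, and I expect it to yield $\big(\sum_j \dim W_j\big)^2 \le \dim V_1 \dim V_2 = \dim V$, contradicting hypothesis (3). This mirrors the counting in \cite[Lemme 15]{Marin:2007}, which I would follow closely.
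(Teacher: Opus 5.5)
There is a genuine gap. Your multiplicity argument is the right key step, but you misread what it proves, and the counting at the end does not go through.

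\textbf{What your multiplicity argument actually shows.} In the paragraph ``Granting that $\fh_j \subseteq \g_1$'' you correctly show the following: if $\fh_j$ projected to zero in $\g_2$, then every nontrivial $\fh_j$-factor of $V$ would have multiplicity divisible by $\dim V_2 \ge 2$. By symmetry, this proves that \emph{every} $\fh_j$ projects injectively into \emph{both} $\g_1$ and $\g_2$. Equivalently, $V_1$ and $V_2$ are each nontrivial $\fh_j$-modules for all $j$. So the diagonal case is the only case, and it cannot be excluded using multiplicity one and ``standardness'' alone. For example, embed $\fh = \fsl(3)$ diagonally in $\g = \fsl(3)\oplus\fsl(3)$ acting on $V = \C^3 \boxtimes \C^3$. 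Then $V|_{\fh} \cong S^2\C^3 \oplus \Lambda^2\C^3$, and $W = \Lambda^2\C^3$ occurs with multiplicity one with $\fh \cong \fsl(W)$, yet $\g$ is not simple; here only (3) fails, since $(\dim W)^2 = 9 = \dim V$. Your first plan (showing $\fh_j$ cannot project nontrivially to both factors) and your fallback (choosing the simple factors of $\fh$ compatibly with $\g_1\oplus\g_2$) therefore both aim at a false statement. Hypothesis (3) is exactly what handles the diagonal situation.

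\textbf{Why the final count fails.} Your last paragraph contradicts what you have just proved: the $\fh_j$ do not act on ``disjoint tensor slots''. A bound like $\prod_j \dim W_j \le \dim V_1$ is not true in general; for instance $V_1$ could restrict to $\fh_1\oplus\fh_2$ as $W_1\boxtimes\C \oplus \C\boxtimes W_2$. It is also not sufficient: a bound on only one of $V_1, V_2$ gives at best $\dim V \ge 2\sum_j \dim W_j$, not the square.

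\textbf{The missing step.} Do the following for each $i \in \{1,2\}$:
\begin{enumerate}
\item Regard $V_i$ as a module over $\fh_1\oplus\cdots\oplus\fh_s$ via $\fh \subseteq \g \twoheadrightarrow \g_i$. Its composition factors have the form $W_{i,1}\boxtimes\cdots\boxtimes W_{i,s}$.
\item A nontrivial simple $\fsl(W_j)$-module has dimension at least $\dim W_j$, and a product of integers $\ge 2$ is at least their sum. Hence each composition factor has dimension at least $\sum \dim W_j$ over those $j$ with $W_{i,j}$ nontrivial.
\item Since $V_i$ is nontrivial for every $\fh_j$, each index $j$ is nontrivial in at least one composition factor.
\end{enumerate}
Summing over composition factors gives $\dim V_i \ge \sum_j \dim W_j$ for \emph{both} $i$. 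Hence $\dim V = \dim V_1 \dim V_2 \ge \bigl(\sum_j \dim W_j\bigr)^2$, contradicting (3). This is the paper's argument.
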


\begin{proof}
Let $\g = \g_1 \oplus \cdots \oplus \g_t$ be the decomposition of the semisimple Lie algebra $\g$ into simple ideals. We want to show that $t = 1$. Since $\g$ acts irreducibly on $V$, it follows for each $1 \leq i \leq t$ that there exists a simple $\g_i$-module $V_i$ such that $V$ identifies with the external tensor product of modules $V_1 \boxtimes \cdots \boxtimes V_t$. Since $\g \subset \fsl(V)$ acts faithfully on $V$, it follows for each $i$ that $V_i$ cannot be the trivial $\g_i$-module (else $\g_i$ acts trivially on $V$), and hence $\dim(V_i) \geq 2$.

Fix some $1 \leq i \leq t$. Then for each $j$, the space $V_i$ becomes an $\fh_j$-module via the sequence of canonical Lie algebra maps $\fh_j \subseteq \fh \subseteq \g \twoheadrightarrow \g_i$. If $V_i$ were trivial as an $\fh_j$-module, it would follow that each $\fh_j$-composition factor in $V$ would be of multiplicity at least $\dim(V_i) \geq 2$, contradicting the assumption that $\fh_j$ has a composition factor $W_j$ in $V$ of multiplicity one. Then $V_i$ must be nontrivial as an $\fh_j$-module. Now consider $V_i$ as a module for the semisimple Lie algebra $\fh' := \fh_1 \oplus \cdots \oplus \fh_s$. The simple $\fh'$-module factors in $V_i$ are of the form $W = W_{i,1} \boxtimes \cdots \boxtimes W_{i,s}$, where $W_{i,j}$ is a simple $\fh_j$-module. For such $W$, one has
	\begin{equation} \label{eq:prod-geq-sum}
	\dim(W) = \prod_{1 \leq j \leq s} \dim(W_{i,j}) = \prod_{\substack{1 \leq j \leq s \\ \dim(W_{i,j}) > 1}} \dim(W_{i,j}) \geq \sum_{\substack{1 \leq j \leq s \\ \dim(W_{i,j}) > 1}} \dim(W_{i,j}).
	\end{equation}
By assumption, $\fh_j \cong \fsl(W_j)$, and the Lie algebra $\fsl(k)$ has no nontrivial simple modules of dimension less than $k$, so if $\dim(W_{i,j}) > 1$, then $\dim(W_{i,j}) \geq \dim(W_j)$. Also, for each $j$, there must be some simple $\fh'$-module factor $W$ in $V_i$ such that $\dim(W_{i,j}) > 1$, because $V_i$ is nontrivial as an $\fh_j$-module. Together with \eqref{eq:prod-geq-sum}, this implies that $\dim(V_i) \geq \sum_{j=1}^s \dim(W_j)$.

Now if $t \geq 2$, one would have
	\[ \textstyle
	\dim(V) \geq \dim(V_1) \cdot \dim(V_2) \geq \big( \sum_{j=1}^s \dim(W_j) \big)^2 > \dim(V).
	\]
This is a contradiction, so we conclude that $t = 1$, and hence that $\g$ is simple.
\end{proof}

\section{Lie algebra calculations for groups of exceptional type} \label{S:Lie-algebra-calcs}

\subsection{Type \texorpdfstring{$I_2(m)$ for $m \geq 3$}{I2(m) for m >=3}} \label{subsec:I2m-LA}

The reflection group of type $I_2(m)$ is the dihedral group of order $2m$:
	\begin{align*}
	D_m = \subgrp{r,s : r^m = s^2 = (sr)^2 = 1} = \smallset{ 1, r, r^2, \ldots, r^{m-1}, s, sr, sr^2, \ldots, sr^{m-1}}.
	\end{align*}
The set of reflections in $D_m$ is $S = \smallset{s,sr,\ldots,sr^{m-1}}$, and a set of simple reflections is $S_0 = \set{s,sr}$. If $m$ is odd then $S$ consists of a single conjugacy class, while if $m$ is even then $S$ consists of the two conjugacy classes $C_\ell = \smallset{s,sr^2,\ldots,sr^{m-2}}$ and $C_s = \smallset{sr,sr^3,\ldots,sr^{m-1}}$.

\begin{theorem} \label{thm:D(s(Dm)-LA}
For the dihedral group $D_m$ of order $2m$ ($m \geq 3$), the injective Lie algebra map
	\begin{equation} \label{eq:Dm-AW-isomorphism}
	\fD(\fs) \hookrightarrow \bigoplus_{V \in \Irr(D_m)} \fsl(V)
	\end{equation}
of \eqref{eq:D(s)-Artin-Wedderburn} is an isomorphism.
\end{theorem}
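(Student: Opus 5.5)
We will show that $\fD(\fs)$ surjects onto $\bigoplus_{V \in \Irr(D_m)} \fsl(V)$. The one-dimensional simple modules contribute $\fsl(V) = 0$, so it suffices to consider the two-dimensional simple modules $V_k$ for $1 \leq k \leq \lfloor (m-1)/2 \rfloor$. On $V_k$, the element $r$ acts by rotation through $\theta_k = 2\pi k/m$, and $s$ acts by a reflection.

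\emph{Step 1: $\fs_{V_k} = \fsl(V_k)$ for each $k$.} For $1 \leq a < m$, the element $sr^a$ is a reflection, and the relation $srs = r^{-1}$ gives
\[
x_a := [s, sr^a] = r^a - r^{-a} \in \fD(\fs).
\]
Let $R$ denote rotation by $\pi/2$ on $V_k$. Then $\rho_{V_k}(x_a) = 2\sin(a\theta_k)\, R$. Since $0 < \theta_k < \pi$, the element $x_1$ acts nontrivially on $V_k$, so $\fs_{V_k} \neq 0$. Moreover $\fs_{V_k}$ is a quotient of the semisimple algebra $\fD(\fs)$, so it is semisimple. The only nonzero semisimple subalgebra of $\fsl_2$ is $\fsl_2$ itself, hence $\fs_{V_k} = \fsl(V_k)$.

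\emph{Step 2: Goursat-type argument.} The image of $\fD(\fs)$ is a semisimple subalgebra of $\bigoplus_k \fsl(V_k)$ that projects onto each factor. Its ideals are sums of simple ideals $\fs^{V_k}$, and each $\fs^{V_k} \cong \fsl_2$. So either the image is the full sum, or $\fs^{V_j} = \fs^{V_k}$ for some $j \neq k$. In the latter case, $\rho_{V_k} \circ \rho_{V_j}^{-1}$ is an automorphism of $\fsl_2$. Every automorphism of $\fsl_2$ is inner, so $V_j \cong V_k$ as $\fD(\fs)$-modules. We will rule this out. We cannot simply invoke Proposition \ref{prop:iso-equivalent}, because for $m$ even there are two classes of reflections. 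Instead we use trace invariants: a $\fD(\fs)$-module isomorphism preserves $\tr(\rho(x)\rho(y))$ for all $x, y \in \fD(\fs)$.

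\emph{Step 3: distinguishing the $V_k$.} Since $R^2 = -\id$, we have
\[
\tr\bigl(\rho_{V_k}(x_a)\rho_{V_k}(x_b)\bigr) = -8 \sin(a\theta_k)\sin(b\theta_k).
\]
Taking $a = b = 1$, an isomorphism $V_j \cong V_k$ forces $\sin^2\theta_j = \sin^2\theta_k$. With $\theta_j, \theta_k \in (0,\pi)$ and $j \neq k$, this means $\theta_k = \pi - \theta_j$, which can happen when $m$ is even. In that case we take $a = 1$ and $b = 2$. Then $\sin\theta_k = \sin\theta_j$ but $\sin 2\theta_k = -\sin 2\theta_j$, so the two invariants differ unless $\sin 2\theta_j = 0$. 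But $\sin 2\theta_j = 0$ means $\theta_j = \pi/2$, hence $\theta_k = \theta_j$, contradicting $j \neq k$. Thus the $V_k$ are pairwise non-isomorphic as $\fD(\fs)$-modules, so the simple ideals $\fs^{V_k}$ are pairwise distinct, and \eqref{eq:Dm-AW-isomorphism} is surjective.

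I expect Step 3 to be the main obstacle. The naive single invariant $\tr(\rho(x_1)^2)$ fails in exactly the even case $j + k = m/2$, so one needs the auxiliary element $x_2 = [s, sr^2]$. This element lies in $\fD(\fs)$ because $sr^2$ is a reflection.
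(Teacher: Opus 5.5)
Your proof is correct, but it takes a different route from the paper's.

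\textbf{The paper's route.} The paper's proof is a pure dimension count, with three steps:
\begin{enumerate}
\item Injectivity of \eqref{eq:Dm-AW-isomorphism} bounds $\dim \fD(\fs)$ by three times the number of two-dimensional simple modules.
\item Lemma~\ref{lemma:g-reductive}, with $\dim (\CSrm)^G$ equal to $2$ or $1$ according to the parity of $m$, converts this into an upper bound on $\dim \fs$.
\item The $m$ reflections, together with the elements $[s,sr^i] = r^i - r^{m-i}$ for $1 \le i < m/2$, are linearly independent elements of $\fs$ that already attain this bound.
\end{enumerate}

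\textbf{Your route.} You instead work inside the representations. You show $\fs_{V_k} = \fsl(V_k)$. You then reduce surjectivity to pairwise distinctness of the simple ideals $\fs^{V_k}$, as in the proof of Lemma~\ref{lemma:distinct-simple-ideals}; that lemma does not apply directly here. Finally you separate the $V_k$ using trace forms of $x_1$ and $x_2$.

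This correctly handles the even case, where Proposition~\ref{prop:iso-equivalent} is unavailable. There the problematic pairs $V_j, V_{m/2-j}$ are exactly those exchanged by tensoring with a linear character that is $-1$ on one class of reflections. On the underlying space of $V_j$, this twist negates the action of $x_1$ and fixes that of $x_2$. That is precisely why $\tr(\rho(x_1)^2)$ cannot separate the pair, while $\tr(\rho(x_1)\rho(x_2))$ can.

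\textbf{Comparison.} The paper's count is shorter and needs no explicit matrices or angle analysis. However, it relies on the identification $Z(\fs) = (\CSrm)^G$ and on an exact lower bound for $\dim \fs$ obtained inside $\CG$. Your argument uses only the semisimplicity of $\fD(\fs)$, and it shows concretely how $\fD(\fs)$ distinguishes the two-dimensional modules.

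One small simplification is available. The trace form of the defining representation of $\fsl(2)$ is a multiple of the Killing form, so it is preserved by every automorphism. You can therefore compare the two trace forms directly on the common ideal, without invoking the fact that all automorphisms of $\fsl(2)$ are inner.
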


\begin{proof}
The proof is by dimension comparison. The details depend on the parity of $m$. 

First suppose $m = 2k$ is even. Then $\Irr(D_m)$ consists of 4 one-dimensional modules and $k-1$ two-dimensional modules; see \cite[\S2.4]{DK:2025} for explicit descriptions. By the injectivity of \eqref{eq:Dm-AW-isomorphism}, this implies that $\dim(\fD(\fs)) \leq 3 \cdot (k-1) = 3k-3$. Then by Lemma \ref{lemma:g-reductive}, $\dim(\fs) \leq 3k-1$. To see that $\dim(\fs) = 3k-1$, and hence $\dim(\fD(\fs)) = 3k-3$, observe first that $\fs$ contains the $m$ reflections $s,sr,\ldots,sr^{m-1}$. Taking Lie brackets, $\fs$ also contains the elements $[s,sr^i] = r^i - r^{m-i}$ for $1 \leq i \leq k-1$. Then $\dim(\fs) \geq m + (k-1) = 3k-1$, and hence $\dim(\fs) = 3k-1$.

Now suppose $m = 2k+1$ is odd. In this case $\Irr(D_m)$ consists of $2$ one-dimensional modules and $k$ two-dimensional modules. Then as above one has $\dim(\fD(\fs)) \leq 3 \cdot k$, and hence $\dim(\fs) \leq 3k+1$. Since $\fs$ contains the $m$ reflections as well as the brackets $[s,sr^i] = r^i - r^{m-i}$ for $1 \leq i \leq k$, one sees that $\dim(\fs) = 3k+1$, and hence $\dim(\fD(\fs)) = 3k$.
\end{proof}

\begin{remark}
One can show for each simple $\CD_m$-module $V$ with $\dim(V) = 2$ that $V \cong V^\ve$. Since $\fsp(2) = \fsl(2)$, Theorem \ref{thm:D(s(Dm)-LA} thus asserts for $G = D_m$ that the injective Lie algebra map \eqref{eq:D(s)-factorization} is an isomorphism.
\end{remark}

\subsection{Type \texorpdfstring{$H_3$}{H3}}

The reflection group of type $H_3$ is the symmetry group of the regular dodecahedron, or dually, of the regular icosahedron.

\begin{theorem} \label{thm:D(H3)-LA}
For the reflection group of type $H_3$, the injective Lie algebra map \eqref{eq:D(s)-factorization} is an isomorphism. Specifically,
	\begin{equation} \label{eq:D(H3)-LA}
	\fD(\fs) \cong \fsl(V_{\refl}) \oplus \bigoplus_{V \in E/\sim} \fsl(V) \cong \fsl(3) \oplus \fsl(3) \oplus \fsl(4) \oplus \fsl(5).
	\end{equation}
\end{theorem}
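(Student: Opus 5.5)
The plan is a structural argument that $\fs_V=\fsl(V)$ for one representative of each relevant class, after which Lemma \ref{lemma:distinct-simple-ideals} forces the injective map \eqref{eq:D(s)-factorization} to be onto.

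\textbf{Representation theory and target of \eqref{eq:D(s)-factorization}.} The reflection group of type $H_3$ is $G \cong A_5 \times \Z_2$, where $\Z_2=\{\pm 1\}$ and $-1$ is central. All $15$ reflections form a single conjugacy class, and $\ve = \det$ on $V_{\refl}$. The simple modules are $U \boxtimes \chi$, with $U$ one of the $A_5$-modules of dimensions $1,3,3',4,5$ and $\chi$ trivial or sign on $\Z_2$. Since $\ve(-1) = -1$, we get $(U\boxtimes\chi)^\ve \cong U \boxtimes (\chi\cdot\mathrm{sgn}) \not\cong U \boxtimes \chi$, so $F = \emptyset$.

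The reflection representation is a $3$-dimensional module. We have $\Lambda^2(V_{\refl}) \cong V_{\refl}^* \otimes \det \cong V_{\refl}^\ve$, and $\Lambda^3(V_{\refl}) = \ve$. Hence $E/\!\sim$ has exactly three classes, represented by modules $W_3, W_4, W_5$ of dimensions $3,4,5$, with $W_3$ built on $3'$. The target of \eqref{eq:D(s)-factorization} is therefore $\fsl(3)\oplus\fsl(3)\oplus\fsl(4)\oplus\fsl(5)$, of dimension $8+8+15+24=55$.

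\textbf{Showing $\fs_V = \fsl(V)$ for $V \in \{V_{\refl}, W_3, W_4, W_5\}$.} By Lemma \ref{lemma:simples-remain-simple}, $\fs_V$ is a semisimple subalgebra of $\fsl(V)$ acting irreducibly on $V$. For $\dim V \le 5$ the proper irreducible semisimple subalgebras are few:
\begin{itemize}
\item in $\fsl(3)$, only $\fso(3)$;
\item in $\fsl(4)$, the algebras $\fsp(4)$, $\fso(4)\cong\fsl(2)\oplus\fsl(2)$, and the principal $\fsl(2)$;
\item in $\fsl(5)$, the algebras $\fso(5)$ and the principal $\fsl(2)$.
\end{itemize}
Each of these preserves a non-degenerate bilinear form on $V$. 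Such a form would be $\fD(\fs)$-invariant, so Lemma \ref{lemma:form-implies-dual} would give $V \cong V^\ve$, which contradicts the previous step. Hence $\fs_V=\fsl(V)$ in all four cases.

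The one point needing care is this classification of irreducible semisimple subalgebras in small rank. If it is awkward to cite, one can argue directly: a proper irreducible semisimple subalgebra of $\fsl(n)$ with $n \le 5$ acts through a self-dual module, since every irreducible module of $\fsl(2)$, of $\fsl(2)\oplus\fsl(2)$, of $\fsp(4)$ and of $\fso(5)$ is self-dual.

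\textbf{Conclusion.} Apply Lemma \ref{lemma:distinct-simple-ideals} with $X=\{V_{\refl},W_3,W_4,W_5\}$. Its hypotheses hold: $S$ is a single conjugacy class; no two members of $X$ are $\ve$-twists of each other; every member of $X$ has $\fs_V=\fsl(V)$ and $\dim V\ge 3$; and $X\cap F=\emptyset$. The lemma is stated for $X \cap E$, but $V_{\refl}$ satisfies the same conditions, and the argument in its proof applies verbatim to $V_{\refl}$. The lemma then gives $\dim \fD(\fs)\ge 55$. Combined with the injectivity of \eqref{eq:D(s)-factorization}, whose target also has dimension $55$, this yields the isomorphism \eqref{eq:D(H3)-LA}.

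As an independent check, a GAP computation should give $\dim\fs = 56$, consistent with $\dim Z(\fs)=1$ from Lemma \ref{lemma:g-reductive}. The main obstacle is the subalgebra-classification step above.
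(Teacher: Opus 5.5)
Your proposal is correct, but it takes a different route from the paper.

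\textbf{The paper's route.} The proof is a pure dimension count. The injection \eqref{eq:D(s)-factorization} gives $\dim\fD(\fs)\le 55$. A GAP computation gives $\dim(\fs)=56$, and together with $Z(\fs)=\C\calC_\ell$ this yields equality.

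\textbf{Your route.} You prove the lower bound $\dim\fD(\fs)\ge 55$ by hand.
\begin{enumerate}
\item Using $G\cong A_5\times\Z_2$ with $\ve(-1)=-1$, you show that no simple module is isomorphic to its $\ve$-twist, so $F=\emptyset$.
\item Every proper irreducible semisimple subalgebra of $\fsl(n)$ with $n\le 5$ preserves a non-degenerate form on $V$. Combined with Lemma \ref{lemma:form-implies-dual}, this forces $\fs_V=\fsl(V)$ for all four representatives.
\item Lemma \ref{lemma:distinct-simple-ideals} then makes the four ideals $\fs^V$ independent, giving $\dim\fD(\fs)\ge 55$.
\end{enumerate}
Your extension of Lemma \ref{lemma:distinct-simple-ideals} to include $V_{\refl}$ is legitimate. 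The paper applies the lemma the same way, with $V_{\refl}\in X$, in types $H_4$, $E_6$ and $E_7$. Moreover, hypothesis (1) applied to $W_3\in X\cap E$ already rules out $W_3\cong V_{\refl}^\ve$.

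\textbf{Comparison.} Your argument is computer-free, recovers $\dim\fs=56$ as a consequence rather than an input, and follows the same template the paper uses for the larger types. It does lean on features special to $H_3$: $F=\emptyset$, and all relevant dimensions are at most $5$, where the classification of irreducible semisimple subalgebras is elementary. The paper's argument is a one-line count, but it rests entirely on a machine calculation.
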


\begin{proof}
Let $G = W(H_3)$ be the Coxeter group of type $H_3$. A list of the simple $\CG$-modules and their dimensions is given in Table \ref{tab:H3}. Via this labeling, the reflection representation of $G$ is $U_5$, and its exterior powers are $\Lambda^0(U_5) = U_1$, $\Lambda^2(U_5) = U_3$, and $\Lambda^3(U_5) = U_2$. All remaining simple modules are of type $E$. Then $\dim(\fD(\fs)) \leq 55$ by \eqref{eq:D(s)-factorization}. On the other hand, using GAP \cite{GAP4} we have calculated that $\dim(\fs) = 56$. Since $Z(\fs)$ is one-dimensional, spanned by $\calC_\ell$, this implies that $\dim(\fD(\fs)) = 55$, and hence \eqref{eq:D(s)-factorization} is an isomorphism.
\end{proof}

\begin{remark} \label{remark:D(H3)-LA}
With notation as in the preceding proof, one has $(U_5)^\ve \cong U_3$. Then \eqref{eq:D(H3)-LA} can be rewritten as
	\begin{equation} \label{eq:D(H3)-LA-compact}
	\fD(\fs) \cong \bigoplus_{V \in \Irr^{>1}(G)/\sim} \fsl(V).
	\end{equation}
\end{remark}

\subsection{Type \texorpdfstring{$H_4$}{H4}}

The reflection group $G = W(H_4)$ of type $H_4$ is the full symmetry group of a $120$-sided solid (with dodecahedral faces) in $\R^4$, or dually, of a regular $600$-sided solid (with tetrahedral faces). In this case the injective Lie algebra map \eqref{eq:D(s)-factorization} admits a further proper factorization.

Recall that $G$ splits over $\Q(\sqrt{5})$; see \cite[5.3.8]{Geck:2000}. Let $\sigma : \C \to \C$ be a field automorphism lifting the nontrivial automorphism of $\Q(\sqrt{5})$. Given a $\CG$-module $V$, let ${}^\sigma V = \C \otimes_{\sigma} V$ be the $\CG$-module obtained from $V$ by `scalar extension' along $\sigma$.\footnote{Writing ${}^\sigma v = 1 \otimes_\sigma v$, one has $g.({}^\sigma v) = {}^\sigma(g.v)$ for $g \in G$, and $\lambda \cdot {}^\sigma v = {}^\sigma( \sigma^{-1}(\lambda) \cdot v)$ for $\lambda \in \C$.} The next lemma can then be verified in the same manner as Lemma \ref{lemma:ext-powers-refl-rep}; the only difference in the proof is that the values of the bilinear form $b$ are replaced by their conjugates under $\sigma$.

\begin{lemma} \label{lemma:ext-powers-sigma-twist}
Let $G = W(H_4)$, and let $V = V_{\refl}$ be the $4$-dimensional reflection representation of $G$. Then for $0 \leq d \leq 4$, one has
	\[
	\Res^{\CG}_{\fD(\fs)}(\Lambda^d({}^\sigma V)) =  \Lambda^d(\Res^{\CG}_{\fD(\fs)}({}^\sigma V)),
	\]
and the evident analogues of Corollary \ref{cor:exterior-powers-factorization} and Remark \ref{rem:Delta-d-image} hold with $V$ replaced by ${}^\sigma V$.
\end{lemma}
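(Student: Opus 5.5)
We follow the proof of Lemma \ref{lemma:ext-powers-refl-rep}, with $V$ replaced by ${}^\sigma V$. As in that proof, let
\[
\rho_{\Grp},\ \rho_{\Lie}: \fs \to \End(\Lambda^d({}^\sigma V))
\]
denote the group-theoretic and Lie-theoretic structure maps. The goal is to show, for every simple reflection $s_k \in S_0$, that
\[
\rho_{\Lie}(s_k) = \rho_{\Grp}(s_k) + (d-1)\cdot \Id.
\]
By Remark \ref{remark:s-generated-by-S0}, the simple reflections generate $\fs$. Since $(d-1)\cdot\Id$ is central in $\End(\Lambda^d({}^\sigma V))$, the two maps then agree on $\fD(\fs) = [\fs,\fs]$.

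\textbf{Step 1: the form on ${}^\sigma V$.} Set $\alpha_i' = {}^\sigma\alpha_i$. Then $\{\alpha_1',\ldots,\alpha_4'\}$ is a $\C$-basis of ${}^\sigma V$. The defining action $s_i.\alpha_j = \alpha_j - 2b(\alpha_i,\alpha_j)\alpha_i$ has coefficients in $\Q(\sqrt5)$. Using $g.{}^\sigma v = {}^\sigma(g.v)$ and the rule $\lambda\cdot{}^\sigma v = {}^\sigma(\sigma^{-1}(\lambda)v)$, we get
\[
s_i.\alpha_j' = \alpha_j' - 2\,\sigma(b(\alpha_i,\alpha_j))\,\alpha_i'.
\]
So ${}^\sigma V$ looks exactly like a reflection representation, but with the symmetric form $b^\sigma(\alpha_i',\alpha_j') = \sigma(b(\alpha_i,\alpha_j))$. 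In particular:
\begin{itemize}
\item $b^\sigma(\alpha_i',\alpha_i') = 1$;
\item $b^\sigma(\alpha_i',\alpha_j') = 0$ exactly when $b(\alpha_i,\alpha_j) = 0$, since $\sigma$ is injective, so the adjacency pattern of the Coxeter graph is unchanged;
\item each $s_i$ acts as $v \mapsto v - 2b^\sigma(\alpha_i',v)\alpha_i'$, with $s_i.\alpha_i' = -\alpha_i'$.
\end{itemize}

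\textbf{Step 2: leaf vertices.} Take $s_k$ to be a leaf of the $H_4$ Coxeter graph, so $\alpha_k'$ is non-orthogonal to exactly one other basis vector. Evaluate both operators on a basis wedge $\alpha'_{i_1}\wedge\cdots\wedge\alpha'_{i_d}$, using the same three cases as before.
\begin{itemize}
\item \emph{No factor is non-orthogonal to $\alpha_k'$.} Then $s_k$ fixes every factor. Hence $\rho_{\Grp}$ is the identity on this wedge and $\rho_{\Lie}$ is $d$ times the identity.
\item \emph{Exactly one factor $\alpha'_{i_1}$ with $i_1 \neq k$ is non-orthogonal to $\alpha_k'$.} Only that factor changes, to $\alpha'_{i_1} + c\,\alpha_k'$ with $c = -2b^\sigma(\alpha_k',\alpha'_{i_1})$. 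So $\rho_{\Grp}$ adds one term $c\,\alpha_k'\wedge\cdots$, while $\rho_{\Lie}$ gives $d$ copies of the wedge plus that same term.
\item \emph{The wedge contains $\alpha_k'$ and its neighbour.} Here $s_k$ sends $\alpha_k'\mapsto-\alpha_k'$ and $\alpha'_{i_2}\mapsto \alpha'_{i_2} + c\,\alpha_k'$. The extra term wedges $\alpha_k'$ with itself and vanishes, so $\rho_{\Grp}$ gives $-1$ times the wedge. Under $\rho_{\Lie}$ the $\alpha_k'$ slot contributes $-1$, the other $d-1$ slots contribute $+1$ each, and the extra term again vanishes, for a total of $d-2$.
\end{itemize}
In every case the difference $\rho_{\Lie}-\rho_{\Grp}$ is $(d-1)$ times the wedge. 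Only orthogonality and the value $b^\sigma(\alpha_k',\alpha_k') = 1$ are used, never the numerical value of $c$.

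\textbf{Step 3: all simple reflections.} In $H_4$ all reflections are conjugate. For any $s_i$, write $s_i = g s_k g^{-1}$ with $g \in G$. Then
\[
\rho_{\Lie}(s_i) = \rho_{\Grp}(g)\,\rho_{\Lie}(s_k)\,\rho_{\Grp}(g)^{-1},
\]
and the leaf identity from Step 2 transports to $s_i$.

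\textbf{Step 4: the analogues.} The analogues of Corollary \ref{cor:exterior-powers-factorization} and Remark \ref{rem:Delta-d-image} follow formally, exactly as before, because their proofs use only the identity just established together with the fact that the $\Lambda^d({}^\sigma V)$ have dimension $1$ when $d\in\{0,4\}$. Irreducibility of $\Lambda^d({}^\sigma V)$ is not needed for the factorization itself. If one wants it, it holds because the Galois twist of an irreducible representation is irreducible and $\Lambda^d({}^\sigma V) \cong {}^\sigma\Lambda^d(V)$.

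The only real obstacle is Step 1: correctly handling the twisted scalar action, so that the structure constants become $\sigma(b)$ rather than $\sigma^{-1}(b)$. This is harmless here, since $\sigma$ restricts to an involution of $\Q(\sqrt5)$, and only orthogonality and $b^\sigma(\alpha_k',\alpha_k') = 1$ matter anyway.
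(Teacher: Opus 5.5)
Your proof is correct and is essentially the paper's argument: rerun the proof of Lemma~\ref{lemma:ext-powers-refl-rep} on ${}^\sigma V$ with $b$ replaced by its $\sigma$-conjugate $b^\sigma$. As in the paper, it uses only orthogonality, $b^\sigma(\alpha_k',\alpha_k') = 1$, a leaf vertex, and conjugacy of the reflections. One small bookkeeping point: your three cases omit a wedge that contains $\alpha_k'$ but not its neighbour. That case is covered by your second-case computation with $i_1 = k$ and $c = -2$, giving $-1$ under $\rho_{\Grp}$ versus $d-2$ under $\rho_{\Lie}$.
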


\begin{lemma} \label{lemma:H4-V9-V10-factorization}
Let $G = W(H_4)$, and let the simple $\CG$-modules be labeled as in Table \ref{tab:H4}. Then
	\[
	\dim(\fs_{V_9}) = \dim(\fs_{V_{10}}) = \dim(\im(\rho_{V_9} \oplus \rho_{V_{10}})) = 28.
	\]
Consequently, there exists a commutative diagram of Lie algebras
	\[
	\vcenter{\xymatrix{
	\fD(\fs) \ar@{->>}[r]^{\rho_{V_9}} \ar@{->>}[dr]_{\rho_{{V_{10}}}} & \fs_{V_9} \ar@{->}[d]^{\cong} \\
	 & \fs_{V_{10}}.
	}}
	\]
\end{lemma}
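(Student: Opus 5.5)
The first claim is a finite computation, and the commutative diagram follows from it by linear algebra. To get the dimensions, I would work in GAP \cite{GAP4} with explicit matrix realizations of $V_9$ and $V_{10}$ over $\Q(\sqrt{5})$. By Remark \ref{remark:s-generated-by-S0}, $\fs$ is generated by the four simple reflections $s_1,\ldots,s_4$. So $\rho_{V_9}(\fs)$ is the Lie subalgebra of $\gl(V_9)$ generated by the four matrices $\rho_{V_9}(s_i)$. I would compute it by iterated bracketing and row reduction until the span stabilizes. The same procedure applies to $\rho_{V_{10}}(\fs)$ and to the image of $\fs$ under $\rho_{V_9}\oplus\rho_{V_{10}}$, using block-diagonal matrices $\rho_{V_9}(s_i)\oplus\rho_{V_{10}}(s_i)$ in $\gl(V_9\oplus V_{10})$.

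To pass from $\fs$ to $\fD(\fs)$, I would use Lemma \ref{lemma:g-reductive}. Since $S$ is a single conjugacy class in type $H_4$, we have $\fs=\fD(\fs)\oplus\C\calC_\ell$. The class sum $\calC_\ell$ acts on each simple module by a scalar, so on $V_9\oplus V_{10}$ it acts by a pair of scalars. Hence $\rho(\fD(\fs))$ is the derived subalgebra of $\rho(\fs)$, and its dimension is $\dim\rho(\fs)$ minus $0$ or $1$ according to whether that scalar matrix is nonzero. Alternatively, I would compute the derived algebra of the generated algebra directly, as the span of brackets of basis elements. The expected output is $28$ in all three cases. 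It is also worth recording that $28=\dim\fso(8)$, which is consistent with $V_9$ and $V_{10}$ having dimension $8$ and being of type $F$, so that $\fs_{V_9}\subseteq\osp(V_9)$ by Lemma \ref{lemma:osp-factorization}.

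The diagram then follows. Set $\rho=\rho_{V_9}\oplus\rho_{V_{10}}$ and $I=\rho(\fD(\fs))\subseteq\fs_{V_9}\oplus\fs_{V_{10}}$. The two coordinate projections $\pi_9:I\to\fs_{V_9}$ and $\pi_{10}:I\to\fs_{V_{10}}$ are surjective Lie algebra maps, since $\pi_9\circ\rho=\rho_{V_9}$ and $\pi_{10}\circ\rho=\rho_{V_{10}}$ are onto. All three spaces have dimension $28$, so both projections are isomorphisms. The map $\pi_{10}\circ\pi_9^{-1}:\fs_{V_9}\to\fs_{V_{10}}$ is therefore a Lie algebra isomorphism, and it satisfies $(\pi_{10}\circ\pi_9^{-1})\circ\rho_{V_9}=\rho_{V_{10}}$, which is exactly the required commutativity. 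Equivalently, $\ker\rho_{V_9}=\ker\rho_{V_{10}}=\ker\rho$ on $\fD(\fs)$.

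The main obstacle is making the computation reliable. I would work over $\Q(\sqrt5)$, or over a large prime field in which $5$ is a square, rather than in floating point, so that the ranks are exact. I would also make sure that Table \ref{tab:H4}'s labels $V_9$ and $V_{10}$ match the characters GAP produces, for example by checking that the two are $8$-dimensional and Galois-conjugate, ${}^\sigma V_9\cong V_{10}$, or by whatever character data the table records. One could try to avoid computing the joint image by a Galois argument, since ${}^\sigma$ of the relations satisfied by the generating matrices would give an isomorphism $\fs_{V_9}\cong\fs_{V_{10}}$. However, $\sigma$ is only semilinear, so it would not obviously produce a map compatible with $\rho_{V_9}$ and $\rho_{V_{10}}$. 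For that reason I would rely on the direct computation of $\dim\im(\rho_{V_9}\oplus\rho_{V_{10}})$.
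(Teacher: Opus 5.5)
This is essentially the paper's argument. The paper's proof is just the GAP computation of the three dimensions, and your deduction of the diagram (both coordinate projections from the $28$-dimensional joint image are isomorphisms, so $\ker\rho_{V_9}=\ker\rho_{V_{10}}$ on $\fD(\fs)$) is exactly what ``consequently'' abbreviates; one side remark is false, though you never use it: $V_9$ and $V_{10}$ are not Galois conjugate, since both $8$-dimensional characters of $W(H_4)$ are rational-valued (on $\ker(\ve)\cong(2.A_5\times 2.A_5)/\{\pm1\}$ they are induced from $1\boxtimes 4$ and from $2\boxtimes 2'$), so ${}^\sigma V_9\cong V_9$ and ${}^\sigma V_{10}\cong V_{10}$, and no label-matching is needed because the statement is symmetric in $V_9$ and $V_{10}$.
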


\begin{proof}
By calculation in GAP \cite{GAP4}.
\end{proof}

Recall the subsets $E$ and $F$ of $\Irr(G)$ defined in \eqref{eq:E-F-Irr}, and note that ${}^\sigma V_{\refl} \in E$ (see Table \ref{tab:H4}). Define subsets $E'$ and $F'$ of $\Irr(G)$ by
	\begin{equation} \label{eq:E'-F'-Irr}
	\begin{split}
	E' &= \{ V \in E: V \not\cong \Lambda^d({}^\sigma V_{\refl}) \text{ for all $d$} \}, \\
	F' &= \{ V \in F : V \not\cong \Lambda^d({}^\sigma V_{\refl}) \text{ for all $d$} \},
	\end{split}
	\end{equation}
and let $F''$ be the subset of $F'$ obtained by omitting (either) one of the two $8$-dimensional simple $\CG$-modules. Then Lemmas \ref{lemma:ext-powers-sigma-twist} and \ref{lemma:H4-V9-V10-factorization} imply that \eqref{eq:D(s)-factorization} restricts further to an injective Lie algebra homomorphism
	\begin{equation} \label{eq:H4-D(s)-factorization}
	\fD(\fs) \hookrightarrow \fsl(V_{\refl}) \oplus \fsl({}^\sigma V_{\refl}) \oplus \Big[ \bigoplus_{V \in E'/\sim} \fsl(V) \Big] \oplus \Big[ \bigoplus_{V \in F''} \osp(V) \Big].
	\end{equation}

\begin{theorem} \label{thm:H4-D(s)-LA}
With notation and assumptions as above, the injective Lie algebra map \eqref{eq:H4-D(s)-factorization} is an isomorphism, and $\osp(V) = \fso(V)$ for each $V \in F''$. Specifically,
	\begin{gather}
	\fD(\fs) \cong \fsl(V_{\refl}) \oplus \fsl({}^\sigma V_{\refl}) \oplus \Big[ \bigoplus_{V \in E'/\sim} \fsl(V) \Big] \oplus \Big[ \bigoplus_{V \in F''} \fso(V) \Big] \label{eq:D(s)-H4} \\
	\cong \fsl(4)^{\times 2} \oplus \fso(8) \oplus \fsl(9)^{\times 2} \oplus \fso(10) \oplus \fsl(16)^{\times 2} \oplus \fso(16)^{\times 2} \oplus \fso(18) \notag \\
	{} \oplus \fso(24)^{\times 4} \oplus \fsl(25) \oplus \fso(30)^{\times 2} \oplus \fsl(36) \oplus \fso(40) \oplus \fso(48). \notag
	\end{gather}
\end{theorem}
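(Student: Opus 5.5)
The plan is a dimension comparison, as in the proofs of Theorems \ref{thm:D(s(Dm)-LA} and \ref{thm:D(H3)-LA}. First, from the character table of $W(H_4)$ (Table \ref{tab:H4}) I will list the $34$ simple modules with their dimensions, their partners under $V \mapsto V^\ve$, and the exterior powers $\Lambda^d(V_{\refl})$ and $\Lambda^d({}^\sigma V_{\refl})$. This sorts $\Irr^{>1}(G)$ into four groups:
\begin{itemize}
\item the modules absorbed by $\fsl(V_{\refl})$ and $\fsl({}^\sigma V_{\refl})$ via Corollary \ref{cor:exterior-powers-factorization} and Lemma \ref{lemma:ext-powers-sigma-twist} (the $6$-dimensional $\Lambda^2$'s and the $4$-dimensional $\Lambda^3$'s, the latter being the $\ve$-twists of the reflection modules);
\item the $\sim$-classes in $E'$, which give the summands $\fsl(9)^{\times 2}$, $\fsl(16)^{\times 2}$, $\fsl(25)$, $\fsl(36)$;
\item the self-associate modules in $F''$, of dimensions $8,10,16,16,18,24,24,24,24,30,30,40,48$;
\item the redundant $8$-dimensional module, which is absorbed by Lemma \ref{lemma:H4-V9-V10-factorization}.
\end{itemize}
Summing $\dim\fsl$ over the first two groups and $\dim\fso(2r) = 2r^2-r$ over the third gives an upper bound $N$ for $\dim\fD(\fs)$, valid as soon as the orthogonal-versus-symplectic question is settled. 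In any case $N$ is bounded above by the corresponding sum with $\dim\fsp$ in place of $\dim\fso$.

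Second, I will determine the sign $\alpha_U$ for each $V \in F''$, so that $\osp(V)=\fso(V)$ is established. One way is to compute the associator $\phi$ explicitly in GAP on a real (in fact $\Q(\sqrt5)$-rational) model and check whether $\phi^2=+\id$. A cleaner alternative is to compute the Frobenius--Schur-type indicator $\frac{1}{|G|}\sum_{g}\ve(g)\chi_V(g^2)$, which equals $\alpha_U$ up to the normalization. This twisted indicator detects whether the invariant form on $V\otimes\ve \cong V$ (i.e.\ the $\ve$-twisted invariant form) is symmetric or alternating, which is exactly the form $(-|-)$ constructed before Lemma \ref{lemma:osp-factorization}. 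Evaluating this indicator from the character table is a finite computation, and I expect it to give $+1$ for every $V \in F''$.

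Third, I will compute $\dim(\fs)$ directly in GAP, working inside the image of the Artin--Wedderburn map to keep the matrices manageable: take the right-hand side of \eqref{eq:H4-D(s)-factorization} realized by explicit matrices, and iteratively bracket the images of the simple reflections (Remark \ref{remark:s-generated-by-S0}) until the span stabilizes. Since $S$ is a single conjugacy class, $Z(\fs)$ is one-dimensional by Lemma \ref{lemma:g-reductive}, so $\dim\fD(\fs)=\dim\fs-1$. If this equals $N$, injectivity of \eqref{eq:H4-D(s)-factorization} forces it to be an isomorphism.

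The main obstacle is the size of the computation: $N$ is in the thousands, and the ambient matrices have total size $\sum\dim V$ of several hundred. I would therefore compute only the projection onto the target of \eqref{eq:H4-D(s)-factorization}, over $\Q(\sqrt5)$ or modulo a large prime where $5$ is a square, and use rank computations. As a fallback, I would avoid the global computation altogether. First verify componentwise that $\fs_V$ is all of $\fsl(V)$ or $\fso(V)$, using Lemma \ref{lemma:simple-ideal-criterion} with a parabolic subgroup such as $W(H_3)$, whose Lie algebra is known from Theorem \ref{thm:D(H3)-LA}. Then apply Lemma \ref{lemma:distinct-simple-ideals}, whose hypothesis (4) is precisely why one $8$-dimensional module was removed, to conclude that the ideals $\fs^V$ are distinct and their sum is direct. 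The fallback needs one extra step: Lemma \ref{lemma:distinct-simple-ideals} does not cover $\fs^{V_{\refl}}$ or $\fs^{{}^\sigma V_{\refl}}$, so I must also show that these two are distinct from each other. Both are of type $A_3\cong D_3$, so I would check that $V_{\refl}\not\cong {}^\sigma V_{\refl}$ and $V_{\refl}\not\cong ({}^\sigma V_{\refl})^\ve$ as $\fD(\fs)$-modules via Proposition \ref{prop:iso-equivalent}.
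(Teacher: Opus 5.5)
Your main route is correct, and it differs from the paper's. The paper never determines $\alpha_U$ in advance. Its steps are:
\begin{itemize}
\item restrict to $H=W(H_3)$ and prove each $\fs_V$ simple via Lemma~\ref{lemma:simple-ideal-criterion}, with Lemma~\ref{lemma:V34} handling $V_{34}$ separately;
\item identify $\fs_V$ for $V\neq V_{34}$ using rank bounds, Marin's Lemmes 13--14, and local GAP checks in dimensions $8,10,16$;
\item assemble these via Lemma~\ref{lemma:distinct-simple-ideals} into an ideal $I$ with $5839\le\dim I\le 6053$;
\item only then use the global count $\dim\fs=6968$, first to force the complementary ideal to be $\fs^{V_{34}}\cong\osp(V_{34})$, and finally to conclude $\osp(V)=\fso(V)$ throughout, because $6967$ is exactly the all-orthogonal total.
\end{itemize}
The paper needs that global computation anyway. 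Your idea of settling the form types beforehand makes the single comparison $\dim\fD(\fs)=6967=N$ sufficient on its own, which removes the whole branching and rank analysis.

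Your indicator is the right tool, and it equals $\alpha_U$ exactly, not up to normalization. Since $\chi_V$ is real and $V\cong V^\ve$, $\ve$ occurs exactly once in $V^*\otimes V^*$, spanned by $(-|-)$. The sum $\frac{1}{|G|}\sum_g\ve(g)\chi_V(g^2)$ is its multiplicity in $S^2V^*$ minus its multiplicity in $\Lambda^2V^*$. Your expectation of $+1$ for every $V\in F''$ is also right, and can be seen a priori. If $\alpha_U=-1$, then $S^{\lambda^-}$ would be the complex conjugate of the non-isomorphic module $S^{\lambda^+}$. But $G_{\zero}\cong(\tilde I\times\tilde I)/\{\pm(1,1)\}$, with $\tilde I\cong SL_2(\mathbb{F}_5)$ the binary icosahedral group, has only real characters.

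Computing only in the blocks of \eqref{eq:H4-D(s)-factorization} is legitimate. That projection is injective on all of $\fs$, since $\calC_\ell$ acts on $V_{\refl}$ by the scalar $30$ while $\fD(\fs)$ maps to traceless blocks. A mod-$p$ span bounds the characteristic-zero dimension from below, which is the direction you need.

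What the paper's route buys is structural information about each $\fs_V$, and a template that still works for $E_6$--$E_8$, where no global computation is available. Conversely, your indicator would cheaply settle the orthogonal/symplectic ambiguity the paper leaves open in Remarks~\ref{remark:E6-osp-dont-know} and~\ref{remark:E8-osp-dont-know}.

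Your fallback, however, does not avoid the global computation as claimed. Lemma~\ref{lemma:simple-ideal-criterion} yields only simplicity, so you would still need Marin's rank lemmas and separate checks in dimensions $8$, $10$, $16$. More seriously, its hypothesis fails for $V_{34}$: the multiplicity-one factors give only $(3+3)^2=36<48$. Lemma~\ref{lemma:V34} recovers simplicity but only rank $\geq 11$, and $4\cdot 11<48$, so Lemme 14 cannot identify $\fs_{V_{34}}$. This is exactly where the paper invokes $\dim\fs=6968$. A local version would need at least a direct computation of $\dim\fs_{V_{34}}$.

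Your caveat about Lemma~\ref{lemma:distinct-simple-ideals} is sound for $V_{\refl}$, which lies in neither $E$ nor $F$; ${}^\sigma V_{\refl}\in E$ is already covered. The check you propose is what extends the lemma's type-$A$ argument to this pair, as the paper tacitly does.
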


\begin{proof}
Let $X = \set{V_{\refl},{}^\sigma V_{\refl}} \cup (E'/\!\!\sim) \cup F''$. Here, $E'/\!\!\sim$ denotes a complete set of representatives for the equivalence classes in $E'$ under the relation $\sim$. The first main step of the proof is to show for all $V \in X$ that $\fs_V = \rho_V(\fD(\fs))$ is a simple Lie algebra. Let $S_0 = \set{s_1,s_2,s_3,s_4}$ be a set of Coxeter generators for $G$ such that the corresponding Coxeter graph is labeled as in \cite[Theorem 1.3.3]{Geck:2000}, and let $H \cong W(H_3)$ be the subgroup of $G$ generated by the set $\set{s_1,s_2,s_3}$. We will show that $\fs_V$ is simple by considering $\Res^G_H(V)$, the restriction of $V$ to the subgroup $H$.

Let $\fh \subset \fs$ be the Lie subalgebra of $\Lie(\CH) \subset \Lie(\CG)$ generated by the set of reflections in $H$. Given $V \in \Irr(G)$ with module structure map $\rho_V : \CG \to \End(V)$, set $\fh_V = \rho_V(\fD(\fh))$. Then $\fh_V \subseteq \fs_V$ are semisimple Lie subalgebras of $\fsl(V)$, and $\fs_V$ acts irreducibly on $V$, by Lemma \ref{lemma:simples-remain-simple}. For $W \in \Irr(H)$, set $\fh_W = \rho_W(\fD(\fh))$. It follows from Theorem \ref{thm:D(H3)-LA} (as re-expressed in Remark \ref{remark:D(H3)-LA}) and Remark \ref{rem:Delta-d-image} that $\fh_W \cong \fsl(W)$ for each $W \in \Irr(H)$, and
	\[
	\fh_V \cong \bigoplus_{\substack{W \in \Irr^{>1}(H)/\sim \\ W \leq V \text{ or } W^{\ve} \leq V}} \fsl(W).
	\]
In other words, $\fh_V$ consists of a copy of $\fsl(W)$ for each $W \in \Irr^{>1}(H)/\!\!\sim$ such that either $W$ or $W^{\ve}$ occurs with multiplicity at least one in $\Res^G_H(V)$.

Now fix some $V \in X$. Assume for the moment that $V$ is not the $48$-dimensional simple module, labeled $V_{34}$ in Table \ref{tab:H4}; we will treat that case separately via a modified argument in Lemma \ref{lemma:V34} below. The composition factor multiplicities of $\Res^G_H(V)$ can be explicitly determined, for example by character calculations using GAP \cite{GAP4}. By inspection, one observes (for $V \neq V_{34}$) that there exist $\CH$-modules $W_1,\ldots,W_s \in \Irr^{>1}(H)/\!\!\sim$ such that the following conditions hold:
	\begin{itemize}
	\item For each $1 \leq j \leq s$, either $W_j$ or $(W_j)^\ve$ occurs with multiplicity one in $\Res^G_H(V)$.
	\item $\big( \sum_{j=1}^s \dim(W_j) \big)^2 > \dim(V)$.
	\end{itemize}
Each $W_j$ is also a composition factor of multiplicity one for the corresponding simple ideal $\fsl(W_j)$ in $\fh_V$, by Lemma \ref{lemma:simples-remain-simple} and Proposition \ref{prop:iso-equivalent}. Then $\fs_V$ is a simple Lie algebra by Lemma \ref{lemma:simple-ideal-criterion}. Together with Lemma \ref{lemma:V34} below (which is independent of the rest of this proof), this shows for all $V \in X$ that $\fs_V$ is a simple Lie algebra.

The next step of the proof is to show that
	\begin{equation} \label{eq:sV-H4}
	\fs_V = \begin{cases}
	\fsl(V) & \text{if $V \in E' \cup \set{V_{\refl},{}^\sigma V_{\refl}}$,} \\
	\osp(V) & \text{if $V \in F''$.}
	\end{cases}
	\end{equation}
Once again, assume for the moment that $V \in X$ and $V \neq V_{34}$. From the explicit determination of $\Res_H^G(V)$, one can calculate the rank of the semi\-simple Lie algebra $\fh_V$:
	\[
	\rk(\fh_V) = \sum_{\substack{W \in \Irr^{>1}(H)/\sim \\ W \leq V \text{ or } W^{\ve} \leq V}} (\dim(W)-1).
	\]
Since $\fh_V \subseteq \fs_V$, we have $\rk(\fh_V) \leq \rk(\fs_V)$. By case-by-case inspection, one observes (for $V \neq V_{34}$) that $\dim(V) < 4 \cdot \rk(\fh_V) \leq 4 \cdot \rk(\fs_V)$. There are now several cases to determine $\fs_V$:
	\begin{enumerate}
	\item \label{item:E'-dim-9-16} If $\dim(V) < 2 \cdot \rk(\fs_V)$, then $\fs_V = \fsl(V)$ by \cite[Lemme 13]{Marin:2007}. This condition is seen to hold if $V \in E'/\!\!\sim$ and $\dim(V) \in \set{9, 16}$, since then $\dim(V) < 2 \cdot \rk(\fh_V) \leq 2 \cdot \rk(\fs_V)$.
	\item \label{item:E'-dim-4-25-36} If $V \in E'/\!\!\sim$ and $\dim(V) \in \set{4,25,36}$ (the dimensions not covered by the previous case), then $2 \cdot \rk(\fh_V) \leq \dim(V)$. Suppose for the sake of a contradiction that $2 \cdot \rk(\fs_V) \leq \dim(V)$. Then $2 \cdot \rk(\fs_V) \leq \dim(V) < 4 \cdot \rk(\fs_V)$, so we may apply \cite[Lemme 14]{Marin:2007} to deduce that either $(\fs_V,V) \cong (\fsp(4),\C^4)$, or $(\fs_V,V) \cong (\fso(25),\C^{25})$, or $(\fs_V,V) \cong (\fsp(36),\C^{36})$.\footnote{Note that the ``additional'' exceptional case $(B_2,\omega_2)$ listed in \cite[Lemme 14]{Marin:2007} coincides with the case $(C_2,\omega_1)$ via the identification of the simple Lie algebras of types $B_2$ and $C_2$.} This implies that there exists a non-degenerate $\fs_V$-invariant bilinear form on $V$ (symmetric in the second case, and anti-symmetric in the first and third cases). Then $V \cong V^\ve$ by Lemma \ref{lemma:form-implies-dual}, contradicting the assumption that $V \in E'/\!\!\sim$. Thus we must have $\dim(V) < 2 \cdot \rk(\fs_V)$, and hence $\fs_V = \fsl(V)$.

	\item If $V \in F''$, then $\fs_V \subseteq \osp(V)$ by Lemma \ref{lemma:osp-factorization}, so $\rk(\fs_V) \leq \frac{1}{2} \cdot \dim(V)$, and hence (for $V \neq V_{34}$) one gets
		\[
		2 \cdot \rk(\fs_V) \leq \dim(V) < 4 \cdot \rk(\fs_V).
		\]
	Again applying the classification stated in \cite[Lemme 14]{Marin:2007}, we deduce that $\fs_V = \osp(V)$, except perhaps if $\dim(V) \in \set{8,10,16}$.
		\begin{itemize}
		\item If $\dim(V) = 8$, it may be that $\fs_V \cong \fso(7)$, or that $\fs_V \cong \fso(8)$. By Lemma \ref{lemma:H4-V9-V10-factorization} above, we have that $\dim(\fs_V) = 28$, implying that $\fs_V = \fso(V) \cong \fso(8)$.

		\item If $\dim(V) = 10$, it may be that $\fs_V \cong \fsl(5)$, or that $\fs_V \cong \fso(10)$. Using GAP, we have calculated that $\dim(\fs_V) = 45$, implying that $\fs_V = \fso(V) \cong \fso(10)$.

		\item If $\dim(V) = 16$, it may be that $\fs_V \cong \fso(10)$, or that $\fs_V \cong \fso(16)$. Using GAP, we have calculated that $\dim(\fs_V) = 120$, implying that $\fs_V = \fso(V) \cong \fso(16)$.
		\end{itemize}
	\end{enumerate}
This completes the verification of \eqref{eq:sV-H4} for all $V \in X$ except for $V = V_{34}$. For $V \in F''$ with $\dim(V) \in \set{8,10,16}$, we have further determined that $\fs_V = \fso(V)$.

Let $X' = \set{ V \in X : V \neq V_{34}}$. Then $I := \sum_{V \in X'} \fs^V$ is a direct sum in $\fD(\fs)$, by Lemma \ref{lemma:distinct-simple-ideals}, and we may (uniquely) write $\fD(\fs) = I \oplus J$, where $J$ is a (possibly empty) sum of simple ideals in $\fD(\fs)$. Using the dimensions of the simple $\CG$-modules given in Table \ref{tab:H4}, and noting the two possible values for $\dim(\osp(V))$ in the cases where the type of $\osp(V)$ has not yet been determined, it follows that $5839 \leq \dim(I) \leq 6053$. On the other hand, using GAP we have calculated\footnote{Our calculation took around 92 hours to complete.} that $\dim(\fs) = 6968$, and hence $\dim(\fD(\fs)) = 6967$ by Lemma \ref{lemma:g-reductive}. Then $J \neq 0$, and $914 \leq \dim(J) \leq 1128$.

From the uniqueness of the decomposition of a semisimple Lie algebra into simple ideals, it follows that $J \subseteq \bigcap_{V \in X'} \ker(\rho_V)$. Thus the image of $J$ under the map \eqref{eq:H4-D(s)-factorization} is contained entirely in the factor $\osp(V_{34})$. This image must be nonzero because $J \neq 0$ and \eqref{eq:H4-D(s)-factorization} is an injection. Then $J \not\subseteq \ker(\rho_{V_{34}})$, so $J \cap \fs^{V_{34}} \neq 0$, and hence $\fs^{V_{34}} \subseteq J$ by the simplicity of $\fs^{V_{34}}$. Then $J$ can be (uniquely) written as $J = \fs^{V_{34}} \oplus J'$ for some (possibly empty) sum of simple ideals $J'$. It follows that $J' \subseteq \bigcap_{V \in X} \ker(\rho_V)$, implying that $J' = 0$ because \eqref{eq:H4-D(s)-factorization} is an injection.

Now $J = \fs^{V_{34}} \cong \fs_{V_{34}}$ is a simple Lie algebra of dimension at least $914$. By inspection, one sees that this is possible only if $\rk(\fs_{V_{34}}) \geq 22$, and hence $4 \cdot \rk(\fs_{V_{34}}) > 48 = \dim(V_{34})$. Since $V_{34} \in F''$, we also have $2 \cdot \rk(\fs_{V_{34}}) \leq \dim(V_{34})$, and thus we may apply the classification stated in \cite[Lemme 14]{Marin:2007} to deduce that $\fs_{V_{34}} = \osp(V_{34})$.

We have now shown that $\fD(\fs) = \bigoplus_{V \in X} \fs^V$, and have shown for each $V \in X$ that $\fs^V \cong \fs_V$ is of the form stated in \eqref{eq:sV-H4}. This implies that the map \eqref{eq:H4-D(s)-factorization} is a Lie algebra isomorphism. Further, since $\dim(\fD(\fs)) = 6967$ as remarked above, it follows by dimension comparison that one must have $\fs_V = \fso(V)$ for each $V \in F''$.
\end{proof}

\begin{lemma} \label{lemma:V34}
Let $G = W(H_4)$, and let $V$ be the $48$-dimensional simple $\CG$-module, labeled $V_{34}$ in Table \ref{tab:H4}. Then $\fs_V$ is a simple Lie algebra of rank at least $11$.
\end{lemma}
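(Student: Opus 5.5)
The plan is to run the same restriction argument used in the proof of Theorem \ref{thm:H4-D(s)-LA}, with one modification. The single-subgroup version of Lemma \ref{lemma:simple-ideal-criterion} apparently fails for $V = V_{34}$ when we restrict to $H = W(H_3)$. Either no composition factors of $\Res^G_H(V_{34})$ occur with multiplicity one, or those that do have dimensions whose sum squared does not exceed $48$. So we will use a different parabolic subgroup, or combine information from several.

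\textbf{Step 1: choose a subgroup and decompose.} We restrict instead to the maximal parabolic subgroup $H' = W(A_3) \cong \fS_4$ generated by $\{s_2,s_3,s_4\}$, or to $W(A_1 \times A_2) \cong \fS_2 \times \fS_3$, or (using a different embedding) to a second copy of $H_3$. For $H' \cong \fS_4$, the structure of $\fD(\fs(H'))$ is known from \cite{Marin:2007} and \cite{DK:2025}. It is $\fsl(3) \oplus \fsl(2)$, acting through $V_{\refl}(\fS_4)$ and the $2$-dimensional module. We then compute the composition multiplicities of $\Res^G_{H'}(V_{34})$ by a GAP character calculation.

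\textbf{Step 2: prove simplicity.} Let $\fh'_V = \rho_V(\fD(\fh'))$. We look for simple factors $\fh_j \cong \fsl(W_j)$ of $\fh'_V$ such that $W_j$ or $W_j^\ve$ occurs with multiplicity one and $(\sum_j \dim W_j)^2 > 48$. This holds, for instance, with summands of total dimension at least $7$. If such factors exist, Lemma \ref{lemma:simple-ideal-criterion} gives simplicity of $\fs_V$ directly.

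If multiplicities are too large for a single subgroup, we fall back on a refinement of the proof of Lemma \ref{lemma:simple-ideal-criterion}. Suppose $\fs_V = \g_1 \oplus \g_2$ with $V = V_1 \boxtimes V_2$, and $\dim V_1 \cdot \dim V_2 = 48$ with both dimensions at least $2$. Then every composition factor of $\Res_{\fh}V$ appears with multiplicity compatible with a tensor factorization. One can rule out each of the factorizations $2\cdot 24$, $3\cdot 16$, $4\cdot 12$, $6\cdot 8$ by comparing with the branching to the $H_3$-subalgebra $\fsl(3)\oplus\fsl(3)\oplus\fsl(4)\oplus\fsl(5)$. The key constraint is that a factor $\fsl(5)$ acting nontrivially forces $\dim V_i \geq 5$ for the $V_i$ on which it acts, and similarly for $\fsl(4)$.

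\textbf{Step 3: bound the rank.} Since $\fh_V \subseteq \fs_V$, we have $\rk(\fs_V) \geq \rk(\fh_V) = \sum (\dim W - 1)$, summed over the classes $W \in \Irr^{>1}(H)/\!\sim$ with $W$ or $W^\ve$ occurring in $\Res^G_H(V_{34})$. Because $V_{34}$ is large, it should contain representatives of all four classes $\fsl(3),\fsl(3),\fsl(4),\fsl(5)$ of $H_3$. That gives $\rk \geq 2+2+3+4 = 11$, which is precisely the bound claimed. I would verify this containment from the character table restriction.

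\textbf{Main obstacle.} Step 2 is the hard part. Multiplicities in $\Res V_{34}$ are likely all at least $2$ for $H_3$, and possibly for other parabolics. If so, I would first try the factor-by-factor dimension argument sketched above, using the rank bound from Step 3 together with the requirement that $\sum_i \rk(\g_i) \geq 11$. If that is insufficient, I would fall back on a direct GAP computation of $\dim(\fs_{V_{34}})$. This is cheaper than the full $\fs$ computation since it lives in $48\times 48$ matrices.
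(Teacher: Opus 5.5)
\textbf{There is a genuine gap in Step 2.} Your Step 3 (the bound $\rk(\fs_V)\geq \rk(\fh_V)=2+2+3+4=11$ via $H=W(H_3)$) matches the paper. The simplicity of $\fs_V$, however, is not established.

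\emph{The subgroup switch cannot work.} Passing to $W(A_3)\cong\fS_4$ or $W(A_1\times A_2)$ fails regardless of what GAP returns. There the simple ideals are $\fsl(3)$ and $\fsl(2)$ (respectively only $\fsl(2)$). So in Lemma~\ref{lemma:simple-ideal-criterion} you always get $(\sum_j\dim W_j)^2\leq 25<48$. The parabolic $I_2(5)\times A_1$ gives at most $16$. Any other copy of $W(H_3)$ is conjugate to $H$ and gives the same branching. So $W(H_3)$ is the only useful subgroup. There the criterion fails only because the multiplicity-one factors give $(3+3)^2=36<48$.

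\emph{The fallback is not yet an argument.} You note that $\fsl(5)$ forces $\dim(V_i)\geq 5$ on the factor it acts on. That does not exclude, say, $48=6\cdot 8$ with the $\fsl(4)$ and $\fsl(5)$ ideals acting only on the $8$-dimensional factor. The rank count does not exclude it either, since $\fsl(6)\oplus\fsl(8)$ has rank $12\geq 11$. The case $t\geq 3$ is never addressed. Your expectation that all multiplicities in $\Res^G_H(V_{34})$ are at least $2$ is also false, and the multiplicity-one factors are exactly what the argument needs. A bare GAP value of $\dim(\fs_{V_{34}})$ would not prove simplicity unless you say how it determines the structure.

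\textbf{The missing idea is a two-stage multiplicity argument} based on
\[
\Res^G_H(V_{34}) = (U_3 \oplus U_5) \oplus (U_4 \oplus U_6) \oplus (U_7 \oplus U_8)^{\oplus 2} \oplus (U_9 \oplus U_{10})^{\oplus 2}.
\]
Write $\fs_V=\g_1\oplus\cdots\oplus\g_t$ and $V=V_1\boxtimes\cdots\boxtimes V_t$. If an ideal $\fh_j$ of $\fh_V$ acted trivially on $V_i$, then as an $\fh_j$-module $V$ would be $\dim(V_i)$ copies of the remaining tensor factors. Every $\fh_j$-composition factor would then have multiplicity divisible by $\dim(V_i)$.

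\emph{Stage one.} $\fh_1=\fsl(U_3)$ and $\fh_2=\fsl(U_4)$ have multiplicity-one composition factors. So they act nontrivially on every $V_i$, and as in the proof of Lemma~\ref{lemma:simple-ideal-criterion} this gives $\dim(V_i)\geq 3+3=6$. Since $\prod_i\dim(V_i)=48$, it follows that $t\leq 2$.

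\emph{Stage two.} Suppose $t=2$. The nontrivial composition factors of $\fh_3=\fsl(U_7)$ and $\fh_4=\fsl(U_9)$ occur with multiplicity exactly $2<6\leq\dim(V_i)$. So these ideals also act nontrivially on both factors. This forces $\dim(V_i)\geq 3+3+4+5=15$ and hence $48\geq 225$, a contradiction.

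The bootstrapping your proposal lacks is this: the multiplicity-one ideals give $\dim(V_i)\geq 6$, and that bound is then fed into the multiplicity-two ideals.
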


\begin{proof}
We retain the notational conventions of Theorem \ref{thm:H4-D(s)-LA}. Then in the notation of Table \ref{tab:H3},
	\[
	\Res^G_H(V) = (U_3 \oplus U_5) \oplus (U_4 \oplus U_6) \oplus (U_7 \oplus U_8)^{\oplus 2} \oplus (U_9 \oplus U_{10})^{\oplus 2},
	\]
and hence
	\[
	\fh_V \cong \fsl(U_3) \oplus \fsl(U_4) \oplus \fsl(U_7) \oplus \fsl(U_9) \cong \fsl(3) \oplus \fsl(3) \oplus \fsl(4) \oplus \fsl(5).
	\]
The simple ideal $\fh_1 := \fsl(U_3)$ has the multiplicity-one composition factors $U_3$ and $U_5$ in $V$, and $\fh_2 := \fsl(U_4)$ has the multiplicity-one composition factors $U_4$ and $U_6$. Each nontrivial simple factor for the simple ideals $\fh_3 := \fsl(U_7)$ and $\fh_4 := \fsl(U_9)$ occurs with multiplicity two in $V$.

First we show that the Lie algebra $\fs_V$ is simple. Let $\fs_V = \g_1 \oplus \cdots \oplus \g_t$ be the decomposition of $\fs_V$ into simple ideals as in Lemma \ref{lemma:simple-ideal-criterion}, and let $V = V_1 \boxtimes \cdots \boxtimes V_t$ be the corresponding expression of $V$ as an external tensor product of simple modules. Considering the simple ideals $\fh_1$ and $\fh_2$ and arguing as in the proof of Lemma \ref{lemma:simple-ideal-criterion}, one sees for each $1 \leq i \leq t$ that $\dim(V_i) \geq \dim(U_3) + \dim(U_4) = 6$. Since $48 = \dim(V) = \prod_{i=1}^t \dim(V_i)$, this implies that $t \leq 2$. Now suppose for the sake of a contradiction that $t = 2$. Let $i \in \set{1,2}$. If $V_i$ were trivial as an $\fh_3$-module, it would follow as in the proof of Lemma \ref{lemma:simple-ideal-criterion} that each $\fh_3$-module composition factor in $V$ is of multiplicity at least $\dim(V_i) \geq 6$, contradicting the observation that each nontrivial $\fh_3$-module factor in $V$ is of multiplicity two. Then $V_i$ is nontrivial as an $\fh_3$-module. The same conclusion holds for $V_i$ as an $\fh_4$-module. Then considering $V_i$ as a module for the semisimple Lie algebra $\fh' = \fh_1 \oplus \fh_2 \oplus \fh_3 \oplus \fh_4$, and arguing as in the proof of the lemma, one sees that $\dim(V_i) \geq 3 + 3 + 4 + 5 = 15$. Then
	\[
	48 = \dim(V) = \dim(V_1) \cdot \dim(V_2) \geq 15 \cdot 15 = 225,
	\]
a contradiction. Thus $t = 1$, and $\fs_V$ is a simple Lie algebra with $\rk(\fs_V) \geq \rk(\fh_V) = 11$.
\end{proof}

\begin{remark} \label{remark:triality}
Let $G = W(H_4)$, and let the simple $\CG$-modules be labeled as in Table \ref{tab:H4}. The preceding proof together with Lemma \ref{lemma:ext-powers-sigma-twist} implies that $\fs^{V_9} = \fs^{V_{10}} \cong \fso(8)$. The Lie algebra $\fso(8)$ has three $8$-dimensional irreducible representations: its defining representation and its two spin representations. Fixing an identification of $\fso(8)$ with $\fs^{V_9} = \fs^{V_{10}}$, we do not know which two out of those three modules correspond to $V_9$ and $V_{10}$.
\end{remark}

\subsection{Type \texorpdfstring{$F_4$}{F4}}

Let $G = W(F_4)$ be the reflection group of type $F_4$. In this case the injective Lie algebra map \eqref{eq:D(s)-factorization} admits a further proper factorization. First we recall from \cite{Kondo:1965} some details on the structure and representation theory of the group $G$. Our notation is based on that in \cite{Kondo:1965}.

\begin{figure}[htbp]
\setlength{\tabcolsep}{18pt}
\begin{tabular}{cccc}
$\dynkin[Coxeter,text style/.style={scale=1},edge length=1cm,labels={a,d,b,c},label directions={left,right, , }]D4$ & 
$\dynkin[Coxeter,text style/.style={scale=1},edge length=1cm,labels={d,a,\tau,\tau\sigma},label directions={,,,}]F4$ & 
$\dynkin[Coxeter,text style/.style={scale=1},edge length=1cm,labels={c,d,a,\tau},label directions={,,,}]B4$ \\
$H_0 = W(D_4)$ & $G = W(F_4)$ & $H = W(B_4)$ \\
\end{tabular}
\caption{Coxeter diagrams and generators for the groups $H_0$, $G$, and $H$.} \label{fig:Coxeter-diagrams}
\end{figure}

Coxeter generators for the groups $H_0 = W(D_4)$, $G = W(F_4)$, and $H = W(B_4)$ are specified via the labelings of the Coxeter diagrams in Figure \ref{fig:Coxeter-diagrams}. Let $\sigma = \tau \cdot \tau \sigma$, and let $S_2 = \subgrp{\tau,\sigma} \cong \fS_3$. One has $G = H_0 \rtimes S_2$, with the action of $S_2$ on $H_0$ defined by
	\begin{equation} \label{eq:S3-on-D4-relations}
	\begin{gathered}
	\tau a \tau^{-1} = b, \quad \tau c = c \tau, \quad \tau d = d \tau, \\
	\sigma a \sigma^{-1} = b, \quad \sigma b \sigma^{-1} = c, \quad \sigma c \sigma^{-1} = a, \quad \sigma d \sigma^{-1} = d.
	\end{gathered}
	\end{equation}
In this manner, the groups $H_0$ and $H$ are each subgroups of $G$, and $H_0 \leq H$. The group $G$ admits four linear characters: the trivial character, and characters $\ve$, $\ve'$, and $\ve''$ defined by
	\begin{equation} \label{eq:F4-linear-characters}
	\begin{aligned}
	\ve(d) &= -1, \qquad & \ve(a) &= -1, \qquad & \ve(\tau) &= -1, \qquad & \ve(\tau\sigma) &=-1, \\
	\ve'(d) &= +1, \qquad & \ve'(a) &= +1, \qquad & \ve'(\tau) &= -1, \qquad & \ve'(\tau\sigma) &=-1, \\
	\ve''(d) &= -1, \qquad & \ve''(a) &= -1, \qquad & \ve''(\tau) &= +1, \qquad & \ve''(\tau\sigma) &=+1.
	\end{aligned}
	\end{equation}
The notation for the linear characters is chosen so that their restrictions to $H$ and $H_0$ are consistent with the notation in \cite{DK:2026a}. Note that $\ve$ is the sign character of $G$, and $\ve = \ve' \otimes \ve''$.
	
There exists an involutive automorphism $\iota$ of $G$, corresponding to the symmetry of the Coxeter diagram of $G$, such that $\iota(d) = \tau \sigma$, $\iota(a) = \tau$, $\iota(\tau) = a$, and $\iota(\tau \sigma) = d$. Let $N = \iota(H_0) \cap H_0$. Then $N \unlhd G$, $N \subset [G,G]$, and the diagonal map $G \to G \times G$ induces an isomorphism
	\[
	G/N \cong G/\iota(H_0) \times G/H_0 \cong \fS_3 \times \fS_3.
	\]
Let $e = (abcd)^2$, an element of order $3$ in $G$, and let $S_1 = \subgrp{d,e}$. Then $S_1 \cong S_2 \cong \fS_3$, the subgroup generated by $S_1$ and $S_2$ is a direct product in $G$, and $G = N \rtimes (S_1 \times S_2)$. Note that $\ve''$ and $\ve'$ restrict to the sign characters on $S_1$ and $S_2$, respectively.

From the relations \eqref{eq:S3-on-D4-relations}, one sees that $e$ is equivalent to $da$ modulo the normal subgroup $\iota(H_0)$, and hence $G = \iota(H_0) \rtimes S_1$. Let $\pi_1 : G \twoheadrightarrow S_1 \cong \fS_3$ and $\pi_2 : G \twoheadrightarrow S_2 \cong \fS_3$ be the quotient maps with kernels $\iota(H_0) = \subgrp{\tau,\iota(b),\iota(c),\tau\sigma}$ and $H_0$, respectively. Given a $\CS_3$-module $V$, write $\pi_i^*(V)$ for the $\CG$-module obtained via inflation along $\pi_i$. Denote the simple $\CS_3$-modules by
	\begin{itemize}
	\item $S^{[3]}$, the trivial module;
	\item $S^{[2,1]}$, the two-dimensional reflection representation of $\fS_3$; and
	\item $S^{[1^3]}$, the sign representation of $\fS_3$.
	\end{itemize}
Note that $S^{[2,1]} \otimes S^{[1^3]} \cong S^{[2,1]}$, and if $V$ and $W$ are $\CS_3$-modules, then
	\begin{equation} \label{eq:inflation-tensor-sign}
	\begin{split}
	[\pi_1^*(V) \otimes \pi_2^*(W)] \otimes \ve &= [\pi_1^*(V) \otimes \ve''] \otimes [\pi_2^*(W) \otimes \ve'] \\
	&= \pi_1^*(V \otimes S^{[1^3]}) \otimes \pi_2^*(W \otimes S^{[1^3]}).
	\end{split}
	\end{equation}

\begin{lemma} \label{lemma:D(s)-F4-injection}
Label the simple $\CG$-modules as in Table \ref{tab:F4}. Set $\fs_i = \fs_{V_i}$ and $\rho_i = \rho_{V_i}$.
	\begin{enumerate}
	\item \label{item:refl-powers-translate} Let $\kappa \in \set{\ve',\ve''}$. Then $\set{V_9 \otimes \kappa, V_{14} \otimes \kappa, V_{12} \otimes \kappa} = \set{V_{10},V_{15},V_{11}}$.

	\item \label{item:rho13-factors} One has $V_{13} = V_5 \otimes V_7$, and via this identification, there exists a commutative diagram of Lie algebras
		\[
		\vcenter{\xymatrix{
		\fD(\fs) \ar@{->}[r]^-{\rho_{5} \oplus \rho_{7}} \ar@{->}[d]_{\rho_{13}} & \fs_{5} \oplus \fs_{7} \ar@{->}[d]^{\rho_5 \boxtimes \rho_7} \\
		\fs_{13} \ar@{^(->}[r]^-{\subseteq} & \End(V_5 \otimes V_7),
		}}
		\]
	where $\rho_5 \boxtimes \rho_7$ is the structure map for $V_5 \otimes V_7$ as an external tensor product of modules for the Lie algebra $\fs_5 \oplus \fs_7$, defined by $(\rho_5 \boxtimes \rho_7)(x,y) = \rho_5(x) \otimes \id_{V_7} + \id_{V_5} \otimes \rho_7(y)$.

	\item \label{item:rho15-factors} The Lie algebra map $\rho_{15}: \fD(\fs) \to \fs_{15}$ factors through the map $\rho_{10}: \fD(\fs) \to \fs_{10}$.
	
	\item \label{item:D(s)-factorization-F4} Let $E$ and $F$ be the subsets of $\Irr(G)$ defined by \eqref{eq:E-F-Irr}, and let $F'$ be the subset of $F$ defined by omitting the irreducibles in $F$ of dimensions $4$ and $6$ (i.e., omitting $V_{13}$ and $V_{15}$). Then \eqref{eq:D(s)-factorization} restricts further to an injective Lie algebra homomorphism
	\begin{equation} \label{eq:D(s)-F4-injection}
	\fD(\fs) \hookrightarrow \fsl(V_{\refl}) \oplus \Big[ \bigoplus_{V \in E/\sim} \fsl(V) \Big] \oplus \Big[ \bigoplus_{V \in F'} \osp(V) \Big].
	\end{equation}
	\end{enumerate}
\end{lemma}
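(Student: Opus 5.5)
The plan is to prove the four parts in order. Parts \eqref{item:refl-powers-translate} and \eqref{item:rho13-factors} use characters and the structure $G = N \rtimes (S_1 \times S_2)$. Part \eqref{item:rho15-factors} uses a twisted version of Lemma \ref{lemma:ext-powers-refl-rep}. Part \eqref{item:D(s)-factorization-F4} is then formal.

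For \eqref{item:refl-powers-translate}, I would read the labels in Table \ref{tab:F4} as $V_9 = V_{\refl}$, $V_{14} = \Lambda^2(V_{\refl})$ and $V_{12} = \Lambda^3(V_{\refl})$. The linear characters $\ve',\ve''$ are given explicitly in \eqref{eq:F4-linear-characters}, so I would multiply the characters of $V_9, V_{14}, V_{12}$ by $\ve'$ and by $\ve''$ and compare with the character table (by hand from \cite{Kondo:1965}, or in GAP). This is a routine check. Since $\ve = \ve'\otimes\ve''$, the two twists permute $\{V_{10},V_{15},V_{11}\}$ compatibly. In particular, for a fixed $\kappa$ one can arrange $V_{10} = V_9\otimes\kappa$ and $V_{15} = V_{14}\otimes\kappa$ for the same $\kappa$, which is what part \eqref{item:rho15-factors} needs. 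Also, $\Lambda^2(V_9) \cong \Lambda^2(V_9)\otimes\ve$ because $n = 4$, which I would confirm in the table.

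For \eqref{item:rho13-factors}, I would first identify $V_5 = \pi_1^*(S^{[2,1]})$ and $V_7 = \pi_2^*(S^{[2,1]})$, and check $V_{13} \cong V_5 \otimes V_7$ by characters. The long reflections are conjugate to $d$ and $a$, so they lie in $H_0 = \ker \pi_2$. The short reflections are conjugate to $\tau$, so they lie in $\iota(H_0) = \ker\pi_1$. Hence a long reflection $s$ acts on $V_5\otimes V_7$ as $\rho_5(s)\otimes \id$, and $(\rho_5\boxtimes\rho_7)(s) = \rho_5(s)\otimes\id + \id$. Symmetrically, for a short reflection $s$ one gets $(\rho_5\boxtimes\rho_7)(s) = \rho_{13}(s) + \id$.

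So the two Lie algebra maps $\rho_{13}$ and $(\rho_5\boxtimes\rho_7)\circ(\rho_5\oplus\rho_7)$, defined on $\fs$, differ on the generators $S$ by central scalar operators. I would then argue as in the proof of Lemma \ref{lemma:ext-powers-refl-rep}. Brackets of two operators are unchanged when scalars are added to either, so by induction on bracket length the two maps agree on every iterated commutator of generators. Hence they agree on $\fD(\fs)$, which gives the commutative square.

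For \eqref{item:rho15-factors}, let $V = V_9 = V_{\refl}$ and take $\kappa$ as above, so that $V_{10} = V\otimes\kappa$ and $V_{15} = \Lambda^2(V)\otimes\kappa$. For $s\in S$, the operator $\rho_{10}(s)$ equals $\kappa(s)\rho_V(s)$. The Lie action on $\Lambda^2(V_{10})$ is then $\kappa(s)\,\rho^{\wedge 2}_{\Lie}(s)$. By the computation in Lemma \ref{lemma:ext-powers-refl-rep} with $d = 2$, this equals $\kappa(s)\bigl(\rho^{\wedge 2}_{\Grp}(s) + \id\bigr)$, which is $\rho_{15}(s) + \kappa(s)\,\id$. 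Again the two actions differ by scalars on the generators, so they agree on $\fD(\fs)$. Therefore $\rho_{15} = \Delta_2\circ\rho_{10}$ on $\fD(\fs)$.

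I expect the main obstacle to be the bookkeeping in the table: making sure the same $\kappa$ works for both $V_{10}$ and $V_{15}$. If it does not, I would use instead that $V_{14}\otimes\ve\cong V_{14}$, so that $V_{14}\otimes\ve' \cong V_{14}\otimes\ve''$, which lets me switch $\kappa$.

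For \eqref{item:D(s)-factorization-F4}, the maps $\rho_{13}$ and $\rho_{15}$ factor through $\rho_5\oplus\rho_7$ and $\rho_{10}$ respectively. I would check in the table that $V_5$, $V_7$ and $V_{10}$, or their $\ve$-twists (which suffices by \eqref{eq:dual-pair-iso}), still appear among the summands of \eqref{eq:D(s)-F4-injection}. The modules $V_5$ and $V_7$ are $2$-dimensional and satisfy $V\cong V^\ve$, so they lie in $F'$. The module $V_{10}$ is not an exterior power of $V_{\refl}$, so it lies in $E$ or $F'$. It follows that the kernel of the map \eqref{eq:D(s)-F4-injection} is contained in $\ker\rho_{13}\cap\ker\rho_{15}$, and hence in the kernel of \eqref{eq:D(s)-factorization}, which is zero.
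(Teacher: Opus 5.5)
Your proposal is correct and follows the paper's proof in substance. Part (1) is a character check. For (2) and (3), the group and Lie actions differ by central operators on generators; your derivation of $\rho_{15}(s) + \kappa(s)\,\id$ by scaling the untwisted relation of Lemma \ref{lemma:ext-powers-refl-rep} by $\kappa(s)$ is a tidier way of recording the paper's sign change at $\tau\sigma$. Part (4) is then the formal kernel argument.

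One small correction: Table \ref{tab:F4} gives $V_5^\ve \cong V_6$ and $V_7^\ve \cong V_8$. So $V_5$ and $V_7$ lie in $E$, not in $F'$; in fact $F' = \set{V_{24},V_{25}}$. This is harmless, since representatives of their $\sim$-classes still appear in the $E/\!\!\sim$ summand of \eqref{eq:D(s)-F4-injection}.
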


\begin{proof}
Statement \eqref{item:refl-powers-translate} is checked by character calculations. To establish \eqref{item:rho13-factors}, first recall from \cite[\S3.1]{Kondo:1965} that $G$ has nine simple modules that are trivial for $N$: the four $1$-dimensional modules, the four $2$-dimensional modules
	\begin{align*}
	V_5 &= \pi_1^*(S^{[2,1]}) = \pi_1^*(S^{[2,1]}) \otimes \pi_2^*(S^{[3]}), & V_6 &= \pi_1^*(S^{[2,1]}) \otimes \pi_2^*(S^{[1^3]}), \\
	V_7 &= \pi_2^*(S^{[2,1]}) = \pi_1^*(S^{[3]}) \otimes \pi_2^*(S^{[2,1]}), & V_8 &= \pi_1^*(S^{[1^3]}) \otimes \pi_2^*(S^{[2,1]}),
	\end{align*}
and the module $V_{13} = V_5 \otimes V_7 = \pi_1^*(S^{[2,1]}) \otimes \pi_2^*(S^{[2,1]})$; the identification $V_{13} = V_5 \otimes V_7$ follows from \eqref{eq:inflation-tensor-sign} and the fact that $G$ has a unique $4$-dimen\-sional simple module $V$ such that $V \cong V^\ve$.

Consider the structure maps for the group-theoretic and Lie-theoretic actions of $\fs$ on the tensor product $V_5 \otimes V_7$, i.e., $\rho_{\Grp} = \rho_{5} \otimes \rho_{7} = \rho_{13}$ and $\rho_{\Lie} = \rho_5 \otimes \id_{V_7} + \id_{V_5} \otimes \rho_7$. By Remark \ref{remark:s-generated-by-S0}, the Lie algebra $\fs$ is generated by the set $S_0 = \set{d,a,\tau,\tau \sigma}$. For each $s \in S_0$, one sees that
	\[
	\rho_{\Lie}(s) = \rho_{\Grp}(s) + \id_{V_5 \otimes V_7}.
	\]
For example, since $d \in H_0$ and since $H_0$ acts trivially on $V_7$, one has
	\begin{gather*}
	\rho_{\Lie}(d) = \rho_5(d) \otimes \id_{V_7} + \id_{V_5} \otimes \rho_7(d) = \rho_5(d) \otimes \id_{V_7} + \id_{V_5 \otimes V_7}, \text{ and} \\
	\rho_{\Grp}(d) = \rho_5(d) \otimes \rho_7(d) = \rho_5(d) \otimes \id_{V_7}.
	\end{gather*}
Since the values of $\rho_{\Grp}$ and $\rho_{\Lie}$ on a set of generators for $\fs$ differ by a central operator, it follows that the restrictions of $\rho_{\Grp}$ and $\rho_{\Lie}$ to $\fD(\fs)$ are equal. This establishes \eqref{item:rho13-factors}.

To establish \eqref{item:rho15-factors}, we first recall some details from \cite[\S 3.3]{Kondo:1965}. The group $H = W(B_4)$ has two simple modules, $U_1$ and $U_2 = U_1 \otimes \ve'$, whose restrictions to $H_0 = W(D_4)$ are simple; these are the modules denoted $S^{([2],[1^2])}$ and $S^{([1^2],[2])}$ in \cite{DK:2026a}. The induced modules $U_1^* := \Ind_H^G(U_1)$ and $U_2^* := \Ind_H^G(U_2)$ decompose as $U_1^* = U_1' \oplus V_{24}$ and $U_2^* = U_2' \oplus V_{24}$, where $V_{24}$ is the simple $\CG$-module of dimension $12$, and $U_1'$ and $U_2'$ are the simple $\CG$-modules of dimension $6$. Reordering if necessary, we may assume that $U_1' = V_{14}$ and $U_2' = V_{15}$. Note that, since the character $\ve'$ lifts to $G$, and since $G$ has a unique simple module of dimension $12$, one has
	\[
	U_2^* = \Ind_H^G(U_1 \otimes \ve') \cong \Ind_H^G(U_1) \otimes \ve' = (U_1' \otimes \ve') \oplus (V_{24} \otimes \ve') \cong (U_1' \otimes \ve') \oplus V_{24},
	\]
and hence $U_2' \cong U_1' \otimes \ve'$.

Next, recall from \cite[\S 3.4]{Kondo:1965} that, after $V_{13}$, the remaining simple $\CG$-modules of dimension $4$ can be obtained from one another by tensoring with a linear character of $G$. To wit, one has the reflection representation $V_9$ and the modules $V_{12} = V_9 \otimes \ve$, $V_{10} = V_9 \otimes \ve'$, and $V_{11} = V_9 \otimes \ve''$. Since $U_1' = V_{14} = \Lambda^2(V_9)$, we have $V_{15} = U_2' \cong U_1' \otimes \ve' = \Lambda^2(V_9) \otimes \ve'$. We claim that $V_{15} \cong \Lambda^2(V_{10})$ as $\fD(\fs)$-modules. More precisely, we claim that via the canonical vector space identifications $\Lambda^2(V_9) \otimes \ve' = \Lambda^2(V_9) = \Lambda^2(V_9 \otimes \ve')$, one has
	\[
	\Res^{\CG}_{\fD(\fs)}( \Lambda^2(V_9) \otimes \ve') = \Lambda^2( \Res^{\CG}_{\fD(\fs)}(V_9 \otimes \ve')).
	\]
This is established by an argument parallel to that given for Lemma \ref{lemma:ext-powers-refl-rep}, the only difference being that for the ``edge'' simple reflection $\tau \sigma$, one obtains the relation $\rho_{\Lie}^{\wedge^2}(\tau\sigma) = \rho_{\Grp}^{\wedge^2}(\tau\sigma) - \id_{\Lambda^2(V_9)}$ rather than the relation $\rho_{\Lie}^{\wedge^2}(\tau\sigma) = \rho_{\Grp}^{\wedge^2}(\tau\sigma) + \id_{\Lambda^2(V_9)}$; cf.\ the proof of \cite[Lemma 4.3.7]{DK:2026a}. This still implies for all simple reflections $s$ that $\rho_{\Lie}^{\wedge^2}(s)$ and $\rho_{\Grp}^{\wedge^2}(s)$ differ by a central element of $\End(\Lambda^2(V_9))$, and hence implies that the restrictions of $\rho_{\Grp}^{\wedge^2}$ and $\rho_{\Lie}^{\wedge^2}$ to $\fD(\fs)$ are the same. Then the factorization stated in part \eqref{item:rho15-factors} follows as in Corollary \ref{cor:exterior-powers-factorization}.

Finally, part \eqref{item:D(s)-factorization-F4} is an immediate consequence of parts \eqref{item:rho13-factors} and \eqref{item:rho15-factors}.
\end{proof}

\begin{lemma} \label{lemma:F4-sl(2)}
Let $G = W(F_4)$, and let $V \in \Irr(G)$ with $\dim(V)=2$. Then $\fs_V = \fsl(V)$.
\end{lemma}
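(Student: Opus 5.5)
The four $2$-dimensional simple modules are $V_5,V_6,V_7,V_8$, each inflated from the reflection representation $S^{[2,1]}$ of $\fS_3$ (possibly twisted by the sign of the other factor) along $\pi_1$ or $\pi_2$. Since $\fs_V \subseteq \fsl(V)$ and $\dim \fsl(V) = 3$, the plan is to show that $\fs_V$ is not abelian. A nonabelian subalgebra of $\fsl(2)$ has dimension at least $2$; if it had dimension exactly $2$ it would be a Borel subalgebra and would fix a line, contradicting Lemma~\ref{lemma:simples-remain-simple}. So a nonabelian $\fs_V$ must equal $\fsl(V)$.

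The first step is to reduce to $V_5$ and $V_7$. By \eqref{eq:inflation-tensor-sign}, $V_6 \cong V_5 \otimes \ve \cong V_5^\ve$ and $V_8 \cong V_7^\ve$. The diagram \eqref{eq:dual-pair-iso} then gives $\fs_{V_6} \cong \fs_{V_5}$ and $\fs_{V_8} \cong \fs_{V_7}$. Next, the automorphism $\iota$ of $G$ permutes $S$, so it induces an automorphism of $\Lie(\CG)$ preserving $\fs$. It also interchanges the roles of $\pi_1$ and $\pi_2$, because it swaps $H_0$ and $\iota(H_0)$. Hence twisting by $\iota$ carries $V_7$ to $V_5$ up to isomorphism, and it suffices to treat $V = V_7 = \pi_2^*(S^{[2,1]})$.

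On $V_7$ the generators $d$ and $a$ act trivially, since they lie in $H_0 = \ker \pi_2$. The generators $\tau$ and $\tau\sigma$ map to two distinct transpositions of $S_2 \cong \fS_3$, so on $V_7$ they act as two distinct reflections of the $\fS_3$ reflection representation. In a suitable basis these are $\tau \mapsto \mathrm{diag}(1,-1)$ and $\tau\sigma \mapsto$ a reflection with a different $(-1)$-eigenline. Two reflections with distinct $(-1)$-eigenlines do not commute, and this noncommutativity survives passing to traceless parts. Concretely, the commutator $[\tau,\tau\sigma] = \tau\cdot\tau\sigma - \tau\sigma\cdot\tau$ maps to $\rho(\sigma)-\rho(\sigma)^{-1}$, which is nonzero on $S^{[2,1]}$ because $\sigma$ has order $3$ and acts by a rotation by $2\pi/3$. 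Moreover, $\fD(\fs)$ contains $x=[\tau,\tau\sigma]$ together with $[\tau,x]$ and $[\tau\sigma,x]$, and these elements visibly span $\fsl(2)$ after a short computation with explicit matrices. This shows directly that $\rho_{V_7}(\fD(\fs))$ is nonabelian, hence equal to $\fsl(V_7)$.

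I do not expect a serious obstacle here. The only points needing care are the bookkeeping that $\iota$ preserves $\fs$ and exchanges $V_5$ and $V_7$ up to isomorphism, and the explicit $2\times 2$ check that the brackets span $\fsl(2)$. The bookkeeping can be avoided altogether by repeating the $V_7$ computation directly for $V_5$: there $\tau$ and $\tau\sigma$ act trivially, while $d$ and $a$ act as distinct transpositions of $S_1$, since $e \equiv da$ has order $3$ modulo $\iota(H_0)$.
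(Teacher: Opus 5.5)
Your proof is correct, but it takes a different and longer route than the paper, and each half of your argument would suffice on its own.

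\textbf{The paper's argument.} It argues abstractly. $\fs_V$ is semisimple, being the image of the semisimple algebra $\fD(\fs)$. It is nonzero and nonabelian because $V$ is a simple $\fD(\fs)$-module of dimension $2$ (Lemma~\ref{lemma:simples-remain-simple}). Finally, $\fsl(2)$ is the only nonzero nonabelian semisimple Lie algebra of dimension at most $3$.

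\textbf{Your argument.} You establish nonabelianness by an explicit generator computation, after reducing to $V_7$ via $\ve$-twisting and the diagram automorphism $\iota$. You then invoke irreducibility only to exclude a Borel subalgebra.

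\textbf{Redundancy, first direction.} You already use Lemma~\ref{lemma:simples-remain-simple}, so you could use it for the abelian case too. Every subalgebra of $\fsl(V)$ of dimension at most $2$ is solvable, hence stabilizes a line by Lie's theorem. So irreducibility alone forces $\fs_V=\fsl(V)$, with no reductions and no matrix work.

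\textbf{Redundancy, second direction.} Your computation that $x=[\tau,\tau\sigma]$, $[\tau,x]$, and $[\tau\sigma,x]$ are linearly independent in $\fsl(V_7)$ already gives equality outright, without any appeal to simplicity. The symmetry reductions are also unnecessary:
\begin{itemize}
\item $\rho_{V_8}$ agrees with $\rho_{V_7}$ on $\tau$ and $\tau\sigma$, since $\ve''$ is trivial on both;
\item $\rho_{V_6}$ agrees with $\rho_{V_5}$ on $d$ and $a$, since $\ve'$ is trivial on both.
\end{itemize}
So the same three-element computation applies verbatim to all four modules.

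\textbf{What each route buys.} Your explicit route is independent of the structural lemmas imported from earlier work. The paper's route is shorter and uniform.

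\textbf{A small inaccuracy.} It is not true that two reflections of a plane with distinct $(-1)$-eigenlines never commute; perpendicular reflections do commute. What you actually use is your concrete check that $\rho(\sigma)-\rho(\sigma)^{-1}\neq 0$, because $\sigma$ acts as a rotation of order $3$. That check is correct.
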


\begin{proof}
By construction, $\fs_V$ is a semisimple subalgebra of the $3$-dimensional Lie algebra $\fsl(V)$. The Lie algebra $\fs_V$ must be non\-zero, because $V$ is a simple $\fD(\fs)$-module (by Lemma \ref{lemma:simples-remain-simple}) of dimension greater than one, and $\fs_V$ cannot be abelian, because $V$ is simple of dimension greater than one. The only nonzero, non-abelian, semisimple Lie algebra of dimension at most $3$ is $\fsl(2)$, so $\fs_V = \fsl(V)$.
\end{proof}

\begin{theorem} \label{thm:F4-D(s)-LA}
Let $G = W(F_4)$. Label the simple $\CG$-modules as in Table \ref{tab:F4}, and let $F'$ be the subset of $F$ defined by omitting the simple modules of dimensions $4$ and $6$ (i.e., $V_{13}$ and $V_{15}$). Then the map \eqref{eq:D(s)-F4-injection} defines an isomorphism
	\begin{gather}
	\fD(\fs) \cong \fsl(V_{\refl}) \oplus \Big[ \bigoplus_{V \in E/\sim} \fsl(V) \Big] \oplus \Big[ \bigoplus_{V \in F'} \fso(V) \Big] \label{eq:D(s)-F4} \\
	\cong \fsl(2)^{\times 2} \oplus \fsl(4)^{\times 2} \oplus \fsl(8)^{\times 2} \oplus \fsl(9)^{\times 2} \oplus \fso(12) \oplus \fso(16). \notag
	\end{gather}
\end{theorem}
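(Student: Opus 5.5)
The plan is a dimension comparison, as in the proofs of Theorem \ref{thm:D(s(Dm)-LA} and Theorem \ref{thm:H3}-style arguments. I would not try to run the simple-ideal machinery of Lemma \ref{lemma:distinct-simple-ideals}. The reason is that $W(F_4)$ has two conjugacy classes of reflections, so Proposition \ref{prop:iso-equivalent} and Lemma \ref{lemma:form-implies-dual} are not available in the form stated. Lemma \ref{lemma:D(s)-F4-injection}\eqref{item:D(s)-factorization-F4} already provides the injection \eqref{eq:D(s)-F4-injection}. Hence it suffices to do two things: show that the right-hand side has dimension exactly $\dim(\fD(\fs))$, and pin down each orthogonal/symplectic factor.

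The first step is to read off from Table \ref{tab:F4} which modules remain after the reductions. The reflection representation $V_9$ contributes $\fsl(4)$. By Lemma \ref{lemma:D(s)-F4-injection}\eqref{item:refl-powers-translate}, the modules $V_{10},V_{11},V_{12},V_{14},V_{15}$ are either exterior powers of $V_{\refl}$ or $\ve$-partners of such. The remaining set $E/\!\!\sim$ consists of the following:
\begin{itemize}
\item the two classes of $2$-dimensional modules, $\{V_5,V_6\}$ and $\{V_7,V_8\}$;
\item one further class of $4$-dimensional modules, namely the one containing $V_{10}$;
\item two classes of $8$-dimensional modules;
\item two classes of $9$-dimensional modules.
\end{itemize}
The set $F'$ consists of the $12$-dimensional module $V_{24}$ and the $16$-dimensional module. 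This yields the displayed list of summands, up to the question of whether each factor for $V \in F'$ is $\fso$ or $\fsp$.

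The second step settles that question, and I would do it directly rather than via dimension counting. For $V \in F'$, the form $(-|-)$ of Lemma \ref{lemma:osp-factorization} is a nonzero element of $\Hom_{\CG}(V \otimes V, \ve)$, which is one-dimensional. Hence $\alpha_U$ is detected by the $\ve$-twisted Frobenius--Schur indicator
\[
\frac{1}{|G|} \sum_{g \in G} \ve(g) \chi_V(g^2) \in \{\pm 1\}.
\]
This indicator is $+1$ exactly when the invariant form is symmetric. I would evaluate it from the character table, by hand from \cite{Kondo:1965} or in GAP. I expect it to give $+1$ for both modules, so that $\osp(V) = \fso(V)$.

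Granting this, the right-hand side of \eqref{eq:D(s)-F4-injection} has dimension
\[
2\cdot 3 + 2\cdot 15 + 2\cdot 63 + 2\cdot 80 + 66 + 120 = 508.
\]
The final step is a GAP computation of $\dim(\fs)$, by spanning $S$ and closing under brackets until the span stabilizes. The expected value is $510$. Since $Z(\fs) = (\CSrm)^G$ is spanned by $\calC_\ell$ and $\calC_s$, Lemma \ref{lemma:g-reductive} then gives $\dim(\fD(\fs)) = 508$, and the injection \eqref{eq:D(s)-F4-injection} is an isomorphism.

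The main obstacle is making the final dimension count rigorous without over-reliance on computation. If the twisted-indicator step were unavailable or ambiguous, I would fall back on a GAP computation of $\dim(\fs_V)$ for $V \in F'$, which must equal $66$ or $78$, respectively $120$ or $136$. The GAP calculation of $\dim(\fs)$ itself is then the decisive input, and should be cheap compared to the $H_4$ case.
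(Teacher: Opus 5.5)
Your proof is correct, but it decides orthogonal versus symplectic by a different route than the paper.

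\textbf{The paper's route.} The paper first proves that each $\fs_V$ ($V \in X$) is simple, by restricting to the parabolic subgroup $W(B_3)$ and applying Lemma~\ref{lemma:simple-ideal-criterion}. It then computes $\dim(\fs_{V_{24}})=66$ and $\dim(\fs_{V_{25}})=120$ in GAP. From these it deduces $\fs_V=\fso(V)$ for $V\in F'$, using the classification of simple Lie algebras by dimension, Lemma~\ref{lemma:osp-factorization}, and Marin's Lemme~14. Only then does it run the same global count $\dim(\fD(\fs))=508$ that you use.

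\textbf{Your route.} You determine the symmetry of the $\ve$-invariant form directly. Since $\Hom_{\CG}(V\otimes V,\ve)$ is one-dimensional, the twisted indicator $\frac{1}{|G|}\sum_g \ve(g)\chi_V(g^2)$ is $\pm1$ according as $(-|-)$ is symmetric or alternating. Lemma~\ref{lemma:osp-factorization} then gives $\fs_V\subseteq\fso(V)$ outright. Injectivity of \eqref{eq:D(s)-F4-injection} together with $\dim(\fD(\fs))=510-2=508$ finishes the argument.

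\textbf{Comparison.} Your route needs only a character sum and the single GAP computation of $\dim(\fs)$, and it drops the simplicity step entirely. It is also a general method that would resolve the orthogonal/symplectic ambiguity the paper leaves open in Remarks~\ref{remark:E6-osp-dont-know} and~\ref{remark:E8-osp-dont-know}. The paper's route identifies $\fs_{V_{24}}$ and $\fs_{V_{25}}$ structurally without character values, at the cost of two extra GAP computations. Your expectation that both indicators equal $+1$ is consistent with the theorem. An irreducible $\fs_V$-module carries at most one invariant form up to scalar, so $\fs_V=\fso(V)$ forces $(-|-)$ to be symmetric.

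\textbf{Two minor corrections.}
\begin{enumerate}
\item $V_{10}$, $V_{11}$, $V_{15}$ are not exterior powers of $V_{\refl}$ or $\ve$-partners of them. By Lemma~\ref{lemma:D(s)-F4-injection}\eqref{item:refl-powers-translate} they are twists of $V_9$, $V_{12}$, $V_{14}$ by $\ve'$ or $\ve''$. So $\{V_{10},V_{11}\}$ is a genuine class in $E/\!\!\sim$, and $V_{15}\in F$ is dropped only because of the factorization in part~\eqref{item:rho15-factors}. Your final enumeration and the count $508$ are nevertheless right.
\item Your fallback is not justified as stated. Nothing in your route forces $\dim(\fs_V)\in\{66,78\}$ or $\{120,136\}$, since that presumes $\fs_V=\osp(V)$. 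Even a GAP value of $66$ would not by itself rule out a symplectic form. To use that route you would need extra structural input, such as the paper's simplicity argument.
\end{enumerate}
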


\begin{proof}
Let $X = \set{V_{\refl}} \cup (E/\!\!\sim) \cup F'$. The first step of the proof is to show for all $V \in X$ that $\fs_V$ is a simple Lie algebra. We have already shown in Lemma \ref{lemma:F4-sl(2)} that $\fs_V = \fsl(V)$ if $\dim(V) = 2$, so we may assume that $\dim(V) > 2$. Let $S_0 = \set{d,a,\tau,\tau\sigma}$, and let $\calB \cong W(B_3)$ be the subgroup of $G$ generated by the set $\calB_0 := \set{d,a,\tau}$.\footnote{The subgroup generated by $\set{a,\tau,\tau\sigma}$ provides a different copy of $W(B_3)$ in $G$; the choice is immaterial.} We will show that $\fs_V$ is simple by considering $\Res^G_\calB(V)$, the restriction of $V$ to the subgroup $\calB$.

Let $\fb \subset \fs$ be the Lie subalgebra of $\Lie(\CB) \subset \Lie(\CG)$ generated by the set $\calB_0$. Given $V \in \Irr(G)$ with module structure map $\rho_V : \CG \to \End(V)$, set $\fb_V = \rho_V(\fD(\fb))$. Then $\fb_V \subseteq \fs_V$ are semi\-simple subalgebras of $\fsl(V)$, and $\fs_V$ acts irreducibly on $V$ by Lemma \ref{lemma:simples-remain-simple}. For $W \in \Irr(\calB)$, set $\fb_W = \rho_W(\fD(\fb))$. It follows from the results of \cite{DK:2026a} that $\fb_W = \fsl(W)$ for each $W \in \Irr(\calB)$, and
	\[
	\fb_V \cong \bigoplus_{\substack{W \in \Irr^{>1}(\calB)/\sim \\ W \leq V \text{ or } W^{\ve} \leq V}} \fsl(W).
	\]
In other words, $\fb_V$ consists of a copy of $\fsl(W)$ for each $W \in \Irr^{>1}(\calB)/\!\!\sim$ such that either $W$ or $W^{\ve}$ occurs with multiplicity at least one in $\Res^G_\calB(V)$.

Fix $V \in X$ with $\dim(V) > 2$. The composition factor multiplicities of $\Res^G_\calB(V)$ can be explicitly determined, for example by character calculations using GAP \cite{GAP4}. By inspection, one observes that there exist $\CB$-modules $W_1,\ldots,W_s \in \Irr^{>1}(\calB )/\!\!\sim$ such that the following conditions hold:
	\begin{itemize}
	\item For each $1 \leq j \leq s$, either $W_j$ or $(W_j)^\ve$ occurs with multiplicity one in $\Res^G_\calB(V)$.
	\item $\big( \sum_{j=1}^s \dim(W_j) \big)^2 > \dim(V)$.
	\end{itemize}
Each $W_j$ is also a composition factor of multiplicity one for the corresponding simple ideal $\fsl(W_j)$ in $\fb_V$, by Lemma \ref{lemma:simples-remain-simple} and Proposition \ref{prop:iso-equivalent}. Then $\fs_V$ is simple by Lemma \ref{lemma:simple-ideal-criterion}.

Next, in the notation of Table \ref{tab:F4}, one has $F' = \set{V_{24}, V_{25}}$. By direct calculation in GAP, we have computed that $\dim(\fs_{V_{24}}) = 66$ and $\dim(\fs_{V_{25}}) = 120$. The only simple Lie algebra of dimension $66$ is the rank-$6$ algebra $\fso(12)$, so we deduce by Lemma \ref{lemma:osp-factorization} and \cite[Lemme 14]{Marin:2007} that $\fs_{V_{24}} = \fso(V_{24})$. Similarly, the only simple Lie algebras of dimension $120$ are $\fsl(11)$ of rank $10$ and $\fso(16)$ of rank $8$, so we deduce by Lemma \ref{lemma:osp-factorization} and \cite[Lemme 14]{Marin:2007} that $\fs_{V_{25}} = \fso(V_{25})$. Together, these results imply that \eqref{eq:D(s)-F4-injection} defines an injective Lie algebra map
	\begin{equation} \label{eq:D(s)-F4-injection-so}
	\fD(\fs) \hookrightarrow \fsl(V_{\refl}) \oplus \Big[ \bigoplus_{V \in E/\sim} \fsl(V) \Big] \oplus \Big[ \bigoplus_{V \in F'} \fso(V) \Big].
	\end{equation}
Using the dimensions in Table \ref{tab:F4}, we see that the right-hand side of \eqref{eq:D(s)-F4-injection-so} is of dimension $508$. On the other hand, using GAP \cite{GAP4}, we have calculated that $\dim(\fs) = 510$, and hence $\dim(\fD(\fs)) = 508$. Then \eqref{eq:D(s)-F4-injection-so} must be a surjection, and hence an isomorphism.
\end{proof}

\subsection{Type \texorpdfstring{$E_6$}{E6}}

Let $G = W(E_6)$ be the reflection group of type $E_6$.

\begin{theorem} \label{thm:D(E6)-LA}
For the reflection group of type $E_6$, the map \eqref{eq:D(s)-factorization} defines an isomorphism
	\begin{equation} \label{eq:D(E6)-LA}
	\fD(\fs) \cong \fsl(V_{\refl}) \oplus \Big[ \bigoplus_{V \in E/\sim} \fsl(V) \Big] \oplus \Big[ \bigoplus_{V \in F} \osp(V) \Big].
	\end{equation}
\end{theorem}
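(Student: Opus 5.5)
I would follow the template of the proofs of Theorem \ref{thm:H4-D(s)-LA} and Theorem \ref{thm:F4-D(s)-LA}. The argument has two halves. The first is an upper bound on $\dim(\fD(\fs))$ coming from the injection \eqref{eq:D(s)-factorization}. The second is a matching lower bound, obtained by showing that each factor in the target is actually hit by a distinct simple ideal of $\fD(\fs)$.

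\textbf{Setting up.} For $G = W(E_6)$ there is a single conjugacy class of reflections, so Proposition \ref{prop:iso-equivalent}, Lemma \ref{lemma:form-implies-dual} and Lemma \ref{lemma:distinct-simple-ideals} all apply. I would first list $\Irr(G)$ (25 characters) together with the action of $\ve$. I would then identify $\Lambda^d(V_{\refl})$ for $0\le d\le 6$, and sort the remaining modules of dimension $>1$ into $E$ and $F$. Let
\[
X=\{V_{\refl}\}\cup (E/\!\!\sim)\cup F .
\]

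\textbf{Step 1: each $\fs_V$ is simple.} Take $H\cong W(D_5)$, generated by five of the six simple reflections. By \cite{DK:2026b}, $\fD(\fh)$ is known explicitly: its summands are of the form $\fsl(W)$ and $\osp(W)$, plus the type-D coincidences. Using GAP character restrictions $\Res^G_H(V)$, I would locate multiplicity-one composition factors $W_j$ with $\fh_j\cong\fsl(W_j)$ and $(\sum_j \dim W_j)^2>\dim V$, and then apply Lemma \ref{lemma:simple-ideal-criterion}. The choice $H\cong W(A_5)$, where every summand is of type $\fsl$, may be more convenient for satisfying hypothesis (2) of that lemma, so I would try it first. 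Any module for which the criterion fails would be handled ad hoc, as in Lemma \ref{lemma:V34}.

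\textbf{Step 2: identify each $\fs_V$ by rank.} Next compute $\rk(\fh_V)$ and apply \cite[Lemmes 13, 14]{Marin:2007}.
\begin{itemize}
\item If $V\in E$, then either $\dim V<2\rk(\fs_V)$, which gives $\fs_V=\fsl(V)$, or the exceptional cases of Lemme 14 would produce an invariant form. That contradicts Lemma \ref{lemma:form-implies-dual}, so again $\fs_V=\fsl(V)$.
\item If $V\in F$, Lemma \ref{lemma:osp-factorization} gives $2\rk(\fs_V)\le\dim V$. When in addition $\dim V<4\rk(\fs_V)$, Lemme 14 gives $\fs_V=\osp(V)$. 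Small exceptional dimensions would be settled by a direct GAP computation of $\dim(\fs_V)$.
\end{itemize}

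\textbf{Step 3: assemble.} With all $\fs_V$ identified, Lemma \ref{lemma:distinct-simple-ideals} makes the ideals $\fs^V$, $V\in X$, pairwise distinct and their sum direct. For $V_{\refl}$ one also needs to check that $\fs^{V_{\refl}}$ differs from the others; this follows as in that lemma, using Proposition \ref{prop:iso-equivalent} and the assumption that $X$ contains no $\Lambda^d(V_{\refl})$. Hence $\dim(\fD(\fs))$ is at least the dimension of the right-hand side of \eqref{eq:D(E6)-LA}. Injectivity of \eqref{eq:D(s)-factorization} gives the reverse inequality, so the map is an isomorphism. As a consistency check I would compare with a GAP computation of $\dim(\fs)$, using $\dim Z(\fs)=1$ from Lemma \ref{lemma:g-reductive}.

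\textbf{Main obstacle.} Note that the statement deliberately leaves $\osp(V)$ of unspecified type, because $\alpha_U$ is not determined here. A dimension count therefore cannot replace Step 2 for the modules in $F$; the argument must establish $\fs_V=\osp(V)$ structurally. The hardest part is the large modules in $F$ (dimensions such as $60$, $80$, $90$), where $\rk(\fh_V)$ may fall below $\dim V/4$ and Lemme 14 does not apply directly. For these I would first try the H4-style argument. Directly computing $\dim(\fs)$ in GAP and subtracting the known ideals leaves a complement $J$. If only one problematic $V$ remains, $J$ must equal $\fs^V$. Its dimension then forces rank at least $\dim V/4$, and Lemme 14 finishes. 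If several modules in $F$ are problematic at once, one would need finer restriction data, for example to $W(A_5)\times$ or $W(D_4)$ subgroups, or direct GAP rank computations of each $\fs_V$.
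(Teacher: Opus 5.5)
Your proposal is correct and follows essentially the paper's proof. You restrict to $H\cong W(D_5)$ and get simplicity of each $\fs_V$ from Lemma~\ref{lemma:simple-ideal-criterion}, identify $\fs_V$ via Marin's Lemmes 13/14 and Lemma~\ref{lemma:osp-factorization}, and conclude with Lemma~\ref{lemma:distinct-simple-ideals} and injectivity of \eqref{eq:D(s)-factorization}. The contingencies you anticipate do not arise: with $W(D_5)$ one has $2\rk(\fh_V)>\dim(V)$ for all $V\in E\cup\{V_{\refl}\}$ and $4\rk(\fh_V)>\dim(V)$ for all $V\in F$ except the $10$-dimensional module, and that module restricts to $U_{14}$, so $\fsp(U_{14})=\fh_V\subseteq\fs_V\subseteq\osp(V)$ forces $\fs_V=\fsp(V)$ (also, $W(A_5)$ is not all-$\fsl$, since the self-conjugate partition $[3,2,1]$ contributes an $\osp(16)$ summand).
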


\begin{proof}
Let $X = \set{V_{\refl}} \cup (E/\!\!\sim) \cup F$. The first main step of the proof is to show for all $V \in X$ that $\fs_V$ is a simple Lie algebra. Let $S_0 = \set{s_1,\ldots,s_5,s_6}$ be a set of Coxeter generators for $G$ such that the corresponding Coxeter graph is labeled as in \cite[Theorem 1.3.3]{Geck:2000}, and let $H \cong W(D_5)$ be the subgroup of $G$ generated by the set $\set{s_1,\ldots,s_5}$. We will show that $\fs_V$ is simple by considering $\Res_H^G(V)$, the restriction of $V$ to the subgroup $H$.

Let $\fh \subset \fs$ be the Lie subalgebra of $\Lie(\CH) \subset \Lie(\CG)$ generated by the set of reflections in $H$. Given $V \in \Irr(G)$ with module structure map $\rho_V : \CG \to \End(V)$, set $\fh_V = \rho_V(\fD(\fh))$. Then $\fh_V \subseteq \fs_V$ are semisimple Lie subalgebras of $\fsl(V)$, and $\fs_V$ acts irreducibly on $V$, by Lemma \ref{lemma:simples-remain-simple}. For $U \in \Irr(H)$, set $\fh_U = \rho_U(\fD(\fh))$. Label the elements of $\Irr(H)$ as in Table \ref{tab:D5}, and let
	\begin{gather*}
	E_0 = \set{U_7,U_{13},U_{11},U_8} = \set{ U_7, \Lambda^2(U_7), \Lambda^3(U_7), \Lambda^4(U_7)}, \\
	E_1 = \set{ U_3, U_9, U_4} = \set{U_3, \Lambda^2(U_3), \Lambda^3(U_3)}, \\
	E_2 = \set{U_5,U_6,U_{10},U_{12},U_{15},U_{16},U_{17},U_{18}}.
	\end{gather*}
Note that $E(H) = E_2 \cup \set{U_3,U_4}$, and $F(H) = \set{U_9,U_{14}}$. For $i \in \set{0,1}$, write $E_i \cap V \neq \emptyset$ to mean that $\Res_H^G(V)$ contains at least one simple $\CH$-module factor from the set $E_i$. Then it follows from the results of \cite{DK:2026a} that
	\begin{equation} \label{eq:hV-E6-to-D5}
	\fh_V 
	\cong \Big[ \bigoplus_{E_0 \cap V \neq \emptyset} \fsl(U_7) \Big] 
	\oplus \Big[ \bigoplus_{E_1 \cap V \neq \emptyset} \fsl(U_3) \Big] 
	\oplus \Big[ \bigoplus_{\substack{U \in E_2/\sim \\ U \leq V \text{ or } U^\ve \leq V}} \fsl(U) \Big] 
	\oplus \Big[ \bigoplus_{U_{14} \leq V} \fsp(U_{14}) \Big].
	\end{equation}
In other words, $\fh_V$ is the direct sum of:
	\begin{itemize}
	\item a single copy of $\fsl(U_7)$, if $E_0 \cap V \neq \emptyset$;
	\item a single copy of $\fsl(U_3)$, if $E_1 \cap V \neq \emptyset$;
	\item a single copy of $\fsl(U)$, for each $U \in E_2/\!\!\sim$ such that either $U \leq V$ or $U^{\ve} \leq V$;
	\item a single copy of $\fsp(U_{14})$, if $U_{14} \leq V$.
	\end{itemize}
The composition factor multiplicities of $\Res_H^G(V)$ can be explicitly determined, for example by character calculations in GAP \cite{GAP4}.

Now let $V \in X$ be fixed. First suppose $V$ is the unique $10$-dimensional simple $\CG$-module. In this case one has $\Res_H^G(V) = U_{14}$. Since $\fsp(U_{14}) = \fh_{U_{14}} = \fh_V \subseteq \fs_V \subseteq \osp(V)$, this implies that $\fs_V = \fsp(V)$. In particular, $\fs_V$ is simple. Now suppose $\dim(V) \neq 10$. Then by inspection, one observes that there exist simple $\CH$-modules $W_1,\ldots,W_s \in E_2 \cup \set{U_7}$ such that:
	\begin{itemize}
	\item For each $1 \leq j \leq s$, either $W_j$ or $(W_j)^\ve$ occurs with multiplicity one\footnote{In fact, one sees that restriction from $G = W(E_6)$ to $H = W(D_5)$ is multiplicity free.} in $\Res^G_H(V)$.
	\item $\big( \sum_{j=1}^s \dim(W_j) \big)^2 > \dim(V)$.
	\end{itemize}
Each $W_j$ is also a composition factor of multiplicity one for the corresponding simple ideal $\fsl(W_j)$ in $\fh_V$, by Lemma \ref{lemma:simples-remain-simple} and Proposition \ref{prop:iso-equivalent}. Then $\fs_V$ is simple by Lemma \ref{lemma:simple-ideal-criterion}.

The next step of the proof is to show that
	\begin{equation} \label{eq:sV-E6}
	\fs_V = \begin{cases}
	\fsl(V) & \text{if $V \in E \cup \set{V_{\refl}}$,} \\
	\osp(V) & \text{if $V \in F$.}
	\end{cases}
	\end{equation}
We have already observed that $\fs_V = \fsp(V)$ if $\dim(V) = 10$, so we may assume that $\dim(V) \neq 10$. From \eqref{eq:hV-E6-to-D5} and the explicit determination of $\Res^G_H(V)$, one can calculate the rank of the semisimple Lie algebra $\fh_V$. For $V \in E \cup \set{V_{\refl}}$, one observes that $2 \cdot \rk(\fs_V) \geq 2 \cdot \rk(\fh_V) > \dim(V)$, and hence $\fs_V = \fsl(V)$ by \cite[Lemme 13]{Marin:2007}. For $V \in F$, one observes that $4 \cdot \rk(\fs_V) \geq 4 \cdot \rk(\fh_V) > \dim(V)$. Since also $\fs_V \subseteq \osp(V)$ and hence $\dim(V) \geq 2 \cdot \rk(\fs_V)$ by Lemma \ref{lemma:osp-factorization}, this implies by \cite[Lemme 14]{Marin:2007} that $\fs_V = \osp(V)$.

Now let $I = \sum_{V \in X} \fs^V$. This sum is a direct sum in $\fD(\fs)$, by Lemma \ref{lemma:distinct-simple-ideals}, and we can (uniquely) write $\fD(\fs) = I \oplus J$, where $J$ is a (possibly empty) sum of simple ideals in $\fD(\fs)$. By construction, $J \subseteq \bigcap_{V \in X} \ker(\rho_V)$. Then the image of $J$ under the (injective) map \eqref{eq:D(s)-factorization} is zero, implying that $J = 0$. Thus $\fD(\fs) = I = \bigoplus_{V \in X} \fs^V$, which implies the isomorphism \eqref{eq:D(E6)-LA}.
\end{proof}

\begin{remark} \label{remark:E6-osp-dont-know}
For the simple modules of type $F$ in \eqref{eq:D(E6)-LA}, we do not know in general whether $\osp(V)$ is an orthogonal Lie algebra or a symplectic Lie algebra.
\end{remark}

\subsection{Type \texorpdfstring{$E_7$}{E7}}

Let $G = W(E_7)$ be the reflection group of type $E_7$. Note in this case that the set $F(G)$ defined by \eqref{eq:E-F-Irr} is empty.

\begin{theorem} \label{thm:D(E7)-LA}
For the reflection group of type $E_7$, the map \eqref{eq:D(s)-factorization} defines an isomorphism
	\begin{equation} \label{eq:D(E7)-LA}
	\fD(\fs) \cong \fsl(V_{\refl}) \oplus \Big[ \bigoplus_{V \in E/\sim} \fsl(V) \Big].
	\end{equation}
\end{theorem}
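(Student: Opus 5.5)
I will follow the template of the proof of Theorem \ref{thm:D(E6)-LA}. Set $X = \set{V_{\refl}} \cup (E/\!\!\sim)$. Since $F(G) = \emptyset$ for $G = W(E_7)$, no orthogonal or symplectic factors occur. The goal is to show that $\fs_V = \fsl(V)$ for every $V \in X$. Granting this, Lemma \ref{lemma:distinct-simple-ideals} applies, because $S$ is a single conjugacy class in type $E_7$. Condition \eqref{item:V-then-no-Ve} holds by the choice of representatives modulo $\sim$. The dimension requirement $\dim(V) \geq 3$ holds because the smallest nontrivial simple $W(E_7)$-module, $V_{\refl}$, has dimension $7$. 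The lemma then gives a direct sum $I = \bigoplus_{V \in X} \fs^V \subseteq \fD(\fs)$. Writing $\fD(\fs) = I \oplus J$, we have $J \subseteq \bigcap_{V \in X}\ker(\rho_V)$, so the injectivity of \eqref{eq:D(s)-factorization} forces $J = 0$, and \eqref{eq:D(E7)-LA} follows.

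To show $\fs_V = \fsl(V)$, I will restrict to the parabolic subgroup $H = \subgrp{s_1,\ldots,s_6} \cong W(E_6)$, labelled as in \cite[Theorem 1.3.3]{Geck:2000}. Let $\fh \subseteq \fs$ be generated by the reflections in $H$, and set $\fh_V = \rho_V(\fD(\fh)) \subseteq \fs_V$. By Theorem \ref{thm:D(E6)-LA}, $\fD(\fh)$ is a sum of copies of $\fsl(U)$ and $\osp(U)$. The factorizations of Section \ref{subsec:factorizations} determine which ideals act on $\Res^G_H(V)$. Ideals are identified across $U \sim U^\ve$ and across the exterior powers $\Lambda^d(U_{\refl})$; they are not identified otherwise, by Lemma \ref{lemma:distinct-simple-ideals} applied to $H$. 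Using GAP character computations of the restrictions $\Res^G_H(V)$, I will then do two things for each $V \in X$:
\begin{enumerate}
\item exhibit $\fsl$-type simple ideals of $\fh_V$, with multiplicity-one factors $W_1,\ldots,W_s$ in $\Res^G_H(V)$, satisfying $(\sum_j \dim W_j)^2 > \dim V$, so that $\fs_V$ is simple by Lemma \ref{lemma:simple-ideal-criterion};
\item check that $2\rk(\fh_V) > \dim V$, so that $\fs_V = \fsl(V)$ by \cite[Lemme 13]{Marin:2007}.
\end{enumerate}

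The main obstacle is that step (2) may fail for some large modules. The degrees of $W(E_7)$ go up to $512$, and the ranks coming from the $W(E_6)$ ideals could be too small. For such a $V$ I would proceed in order. First, check whether $4\rk(\fh_V) > \dim V$. If so, apply \cite[Lemme 14]{Marin:2007}. Each exceptional case it produces carries a nondegenerate invariant bilinear form, which Lemma \ref{lemma:form-implies-dual} rules out because $V \not\cong V^\ve$. This is the same argument used for $\dim V \in \set{4,25,36}$ in type $H_4$.

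If that also fails, or step (1) fails for some module (as happened for $V_{34}$ in type $H_4$), I fall back on a global dimension count. Use GAP to compute $\dim(\fs)$, hence $\dim(\fD(\fs)) = \dim(\fs) - 1$. The ideal coming from the remaining problematic module must then have dimension at least the difference between $\dim(\fD(\fs))$ and $\sum \dim(\fs^V)$ over the settled modules. That lower bound should force its rank to be large enough for Lemmas 13 and 14 of \cite{Marin:2007} to apply. The feasibility of this computer calculation for a group of order $2903040$ is the practical bottleneck.
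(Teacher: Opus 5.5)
Your proposal is correct and follows the paper's proof essentially step for step. The paper restricts to $H \cong W(E_6)$, gets simplicity of $\fs_V$ from Lemma \ref{lemma:simple-ideal-criterion}, and applies \cite[Lemme 13]{Marin:2007} to every $V$ except the two $512$-dimensional modules. For those it uses $\rk(\fh_V)=254$ and exactly your first fallback, \cite[Lemme 14]{Marin:2007} together with Lemma \ref{lemma:form-implies-dual}, then finishes with Lemma \ref{lemma:distinct-simple-ideals} and injectivity of \eqref{eq:D(s)-factorization}, so no global dimension count is needed.

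One correction: not every exceptional case in \cite[Lemme 14]{Marin:2007} carries an invariant form. For example, $\fsl(5)$ on $\Lambda^2\C^5$ and a half-spin module of $\fso(10)$ do not, which is why type $H_4$ needed GAP in dimensions $10$ and $16$. In dimension $512$ with rank at least $254$, however, the only candidates are $\fso(512)$ and $\fsp(512)$, so your argument goes through.
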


\begin{proof}
The proof is parallel to the proof of Theorem~\ref{thm:D(E6)-LA} but more straightforward. Once again, let $X = \set{V_{\refl}} \cup (E/\!\!\sim)$. The first step is to show for all $V \in X$ that $\fs_V$ is a simple Lie algebra. Let $S_0 = \set{s_1,\ldots,s_6,s_7}$ be a set of Coxeter generators for $G$ such that the corresponding Coxeter graph is labeled as in \cite[Theorem 1.3.3]{Geck:2000}, and let $H \cong W(E_6)$ be the subgroup of $G$ generated by the set $\set{s_1,\ldots,s_6}$. We show that $\fs_V$ is simple by considering $\Res_H^G(V)$.

Let $\fh \subset \fs$ be the Lie subalgebra of $\Lie(\CH) \subset \Lie(\CG)$ generated by the set of reflections in $H$. Given $V \in \Irr(G)$ with module structure map $\rho_V : \CG \to \End(V)$, set $\fh_V = \rho_V(\fD(\fh))$. Then $\fh_V \subseteq \fs_V$ are semisimple Lie subalgebras of $\fsl(V)$, and $\fs_V$ acts irreducibly on $V$, by Lemma \ref{lemma:simples-remain-simple}. For $U \in \Irr(H)$, set $\fh_U = \rho_U(\fD(\fh))$. Let $U_{\refl}$ be the reflection representation of $H$, and let 
	\[
	E_0(H) = \{ \Lambda^d(U_{\refl}) : 0 < d < 6 \}.
	\]
Then it follows from Theorem \ref{thm:D(E6)-LA} that
	\begin{equation} \label{eq:hV-E7-to-E6}
	\fh_V 
	\cong \Big[ \bigoplus_{E_0(H) \cap V \neq \emptyset} \fsl(U_{\refl}) \Big] 
	\oplus \Big[ \bigoplus_{\substack{U \in E(H)/\sim \\ U \leq V \text{ or } U^\ve \leq V}} \fsl(U) \Big] 
	\oplus \Big[ \bigoplus_{\substack{U \in F(H) \\ U \leq V}} \osp(U) \Big].
	\end{equation}
The composition factor multiplicities of $\Res_H^G(V)$ can be explicitly determined, and the simple modules in the set $E_0(H)$ can be identified, by character calculations in GAP \cite{GAP4}.


Now let $V \in X$ be fixed. The simplicity of $\fs_V$ is verified by applying Lemma \ref{lemma:simple-ideal-criterion} as in the proof of Theorem~\ref{thm:D(E6)-LA}, replacing $E_2 \cup \set{U_7}$ there with $E(H) \cup \set{U_{\refl}}$. Next, from \eqref{eq:hV-E7-to-E6} and the explicit determination of $\Res^G_H(V)$, one can calculate the rank of the semisimple Lie algebra $\fh_V$. With the exception of the two $512$-dimensional simple $\CG$-modules, one sees for all $V \in E \cup \set{V_{\refl}}$ that $2 \cdot \rk(\fs_V) \geq 2 \cdot \rk(\fh_V) > \dim(V)$, and hence $\fs_V = \fsl(V)$ by \cite[Lemme 13]{Marin:2007}. If $V$ is one of the two $512$-dimensional simple $\CG$-modules, one sees that $\rk(\fh_V) = 254$, and hence $\dim(V) < 4 \cdot \rk(\fs_V)$. If it were the case that $2 \cdot \rk(\fs_V) \leq \dim(V)$, then one could argue as in case \eqref{item:E'-dim-4-25-36} in the proof of Theorem \ref{thm:H4-D(s)-LA} to arrive at a contradiction. Thus it must be the case that $2 \cdot \rk(\fs_V) > \dim(V)$, and hence $\fs_V = \fsl(V)$.

Now let $I = \sum_{V \in X} \fs^V$. This sum is a direct sum in $\fD(\fs)$, by Lemma \ref{lemma:distinct-simple-ideals}, and we may (uniquely) write $\fD(\fs) = I \oplus J$, where $J$ is a (possibly empty) sum of simple ideals in $\fD(\fs)$. By construction, $J \subseteq \bigcap_{V \in X} \ker(\rho_V)$. Then the image of $J$ under the (injective) map \eqref{eq:D(s)-factorization} is zero, implying that $J = 0$. Thus $\fD(\fs) = I = \bigoplus_{V \in X} \fs^V$, which implies the isomorphism \eqref{eq:D(E7)-LA}.
\end{proof}

\subsection{Type \texorpdfstring{$E_8$}{E8}}

Let $G = W(E_8)$ be the reflection group of type $E_8$.

\begin{theorem} \label{thm:D(E8)-LA}
For the reflection group of type $E_8$, the map \eqref{eq:D(s)-factorization} defines an isomorphism
	\begin{equation} \label{eq:D(E8)-LA}
	\fD(\fs) \cong \fsl(V_{\refl}) \oplus \Big[ \bigoplus_{V \in E/\sim} \fsl(V) \Big] \oplus \Big[ \bigoplus_{V \in F} \osp(V) \Big].
	\end{equation}
\end{theorem}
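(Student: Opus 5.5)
I would follow the template of the proofs of Theorems \ref{thm:D(E6)-LA} and \ref{thm:D(E7)-LA}, now restricting from $G = W(E_8)$ to the parabolic subgroup $H \cong W(E_7)$ generated by $\set{s_1,\ldots,s_7}$ (Coxeter graph labeled as in \cite[Theorem 1.3.3]{Geck:2000}). Set $X = \set{V_{\refl}} \cup (E/\!\!\sim) \cup F$. Let $\fh \subseteq \fs$ be the Lie subalgebra generated by the reflections of $H$, and put $\fh_V = \rho_V(\fD(\fh))$ and $E_0(H) = \set{\Lambda^d(U_{\refl}) : 0 < d < 7}$. By Theorem \ref{thm:D(E7)-LA} (where $F(H)=\emptyset$) we have
\[
\fh_V \cong \Big[ \bigoplus_{E_0(H) \cap V \neq \emptyset} \fsl(U_{\refl}) \Big] \oplus \Big[ \bigoplus_{\substack{U \in E(H)/\sim \\ U \leq V \text{ or } U^\ve \leq V}} \fsl(U) \Big].
\]
The branching multiplicities $\Res^G_H(V)$ for all $112$ irreducible characters would be computed in GAP.

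\textbf{Simplicity of each $\fs_V$.} For each $V \in X$, I would find $W_1,\ldots,W_s \in E(H) \cup \set{U_{\refl}}$ such that, for each $j$, either $W_j$ or $W_j^\ve$ occurs with multiplicity one in $\Res^G_H(V)$, and such that $(\sum_j \dim W_j)^2 > \dim V$. Lemma \ref{lemma:simple-ideal-criterion} then gives simplicity. Multiplicity-one factors may be scarcer here than for $E_6 \subset E_7$, since restriction from $E_8$ to $E_7$ is not multiplicity free. The squared-sum inequality is very weak, however (e.g.\ $\dim V \le 7168$ only needs a total of $85$), so I expect it to hold. If some $V$ fails, I would fall back on the refined argument of Lemma \ref{lemma:V34}, which uses factors of small multiplicity to bound the number of tensor factors.

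\textbf{Identifying $\fs_V$.} Next compute $\rk(\fh_V) \le \rk(\fs_V)$. For $V \in E \cup \set{V_{\refl}}$, I expect $2\rk(\fh_V) > \dim V$, and then \cite[Lemme 13]{Marin:2007} gives $\fs_V = \fsl(V)$. Where only $\dim V < 4\rk(\fh_V)$ holds, argue as in case \eqref{item:E'-dim-4-25-36} of Theorem \ref{thm:H4-D(s)-LA}. The exceptional cases of \cite[Lemme 14]{Marin:2007} would produce an invariant form, hence $V \cong V^\ve$ by Lemma \ref{lemma:form-implies-dual}, a contradiction (note that $E_8$ has a single class of reflections). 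For $V \in F$, Lemma \ref{lemma:osp-factorization} gives $2\rk(\fs_V) \le \dim V$; together with $\dim V < 4\rk(\fh_V)$, \cite[Lemme 14]{Marin:2007} yields $\fs_V = \osp(V)$, after checking that the exceptional small cases (dimensions $8, 16, \ldots$) do not arise or are excluded by rank. The step I expect to be the main obstacle is verifying these rank inequalities for the largest modules (dimensions up to $7168$), where $\fh_V$ may be too small. If they fail, I would either compute $\dim \fs_V$ directly in GAP for that module or use an intermediate subgroup such as $W(D_7)$ or $W(A_7)$ to enlarge the known subalgebra.

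\textbf{Assembling the isomorphism.} Apply Lemma \ref{lemma:distinct-simple-ideals} to $X$. Its hypotheses need $\dim V \ge 3$ on $E$, $\dim V \ge 8$ on $F$, and at most one $8$-dimensional module in $F$, which I would check from the character table. This shows that $I = \sum_{V \in X} \fs^V$ is a direct sum. Writing $\fD(\fs) = I \oplus J$, we get $J \subseteq \bigcap_{V \in X} \ker \rho_V$, so the injectivity of \eqref{eq:D(s)-factorization} forces $J = 0$. Hence the map \eqref{eq:D(s)-factorization} is surjective onto each factor and is an isomorphism.
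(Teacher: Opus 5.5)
Your proposal is correct and follows essentially the same route as the paper: restrict to $W(E_7)$, get simplicity of each $\fs_V$ from Lemma~\ref{lemma:simple-ideal-criterion}, and identify $\fs_V$ via \cite[Lemmes 13, 14]{Marin:2007}, using Lemma~\ref{lemma:form-implies-dual} in the borderline cases. You then conclude with Lemma~\ref{lemma:distinct-simple-ideals} and the injectivity of \eqref{eq:D(s)-factorization}, as the paper does; its proof likewise singles out the multiplicity-two factors in $\Res^G_H(V)$ as the only new feature, which you anticipate.
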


\begin{proof}
Let $X = \set{V_{\refl}} \cup (E/\!\!\sim) \cup F$. Let $S_0 = \set{s_1,\ldots,s_7,s_8}$ be a set of Coxeter generators for $G$ such that the corresponding Coxeter graph is labeled as in \cite[Theorem 1.3.3]{Geck:2000}, and let $H \cong W(E_7)$ be the subgroup of $G$ generated by the set $\set{s_1,\ldots,s_7}$. The proof is entirely parallel to the arguments given previously for types $E_6$ and $E_7$, considering for each $V \in X$ the restricted module $\Res^G_H(V)$. The only novelty in this case is that the restricted module $\Res^G_H(V)$ may have one or more simple factors of multiplicity two.
\end{proof}

\begin{remark} \label{remark:E8-osp-dont-know}
For the simple modules of type $F$ in \eqref{eq:D(E8)-LA}, we do not know whether $\osp(V)$ is an orthogonal Lie algebra or a symplectic Lie algebra.
\end{remark}


\section{Lie superalgebras generated by reflections} \label{sec:preliminaries}

Throughout this section, let $G$ be a (not necessarily irreducible) finite reflection group with set of reflections $S$ as in Section \ref{subsec:generalities}. In Sections \ref{subsec:supergroup-prelim} and \ref{subsec:lie-superalgebra-prelim} we recall some preliminary results on supergroups and Lie super\-algebras; we adopt the general notational and terminological conventions for superspaces and superalgebras as laid out, for example, in \cite[\S2]{DK:2025}. In Section \ref{subsec:Lie-superalgebra-generalities}, we recall some general results on the structure of the Lie superalgebra generated by the set of reflections in $G$.

\subsection{Preliminaries on supergroups} \label{subsec:supergroup-prelim}

The sign character $\ve: G \to \set{\pm 1}$ makes $G$ into a supergroup with $\Gzero = \ker(\ve)$ and $\Gone = G \backslash \Gzero$. In particular, $S \subseteq \Gone$.\footnote{This creates the unfortunate notational confluence that $S_0 \subseteq \Gone$, but we trust the reader to remember in this context that, contrary to the subscript $0$, the set $S_0$ of simple reflections is concentrated in \emph{odd} superdegree.} This $\Z_2$-grading on $G$ extends by linearity to a $\Z_2$-grading on the group algebra $\CG$, making $\CG$ into a superalgebra. Recall the notation $\Irr(G) = \{ S^\lambda : \lambda \in \Lambda(G) \}$ for the set of isomorphism classes of the simple $\CG$-modules, with $(S^\lambda)^\ve = S^\lambda \otimes \ve \cong S^{\lambda'}$.

\begin{lemma}[{cf.\ \cite[Proposition 5.1]{Fulton:1991}}] \label{lemma:classical-Clifford}
Up to isomorphism, each simple $\CGzero$-module arises uniquely in one of the following two ways:
	\begin{enumerate}
	\item \label{item:type-E-simple} If $\lambda \neq \lambda'$, then $\Res_{\Gzero}^G(S^\lambda) \cong \Res_{\Gzero}^G(S^{\lambda'})$ is simple and isomorphic to its conjugate.
	\item \label{item:type-F-simple} If $\lambda = \lambda'$, then $\Res_{\Gzero}^G(S^{\lambda})$ is the direct sum of two simple, conjugate, non-iso\-mor\-phic $\CGzero$-sub\-modules $S^{\lambda^+}$ and $S^{\lambda^-}$, equal to the $+1$ and $-1$ eigen\-spaces of a choice of the map $\phi^{\lambda}$.
	\end{enumerate}
\end{lemma}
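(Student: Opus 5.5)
This is classical Clifford theory for the index-two normal subgroup $\Gzero = \ker(\ve)$. The plan is to combine Frobenius reciprocity with the character identity $\chi_{S^{\lambda'}} = \ve \cdot \chi_{S^\lambda}$, and to use the associator $\phi^\lambda$ of Section \ref{subsec:generalities} to construct the splitting explicitly.

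\textbf{Step 1: restriction.} Fix $\lambda$ and write $\chi = \chi_{S^\lambda}$. Since $\ve$ vanishes on $\Gone$ after summing with the trivial character, we have $\Ind_{\Gzero}^G \Res_{\Gzero}^G(S^\lambda) \cong S^\lambda \otimes (\triv \oplus \ve) \cong S^\lambda \oplus S^{\lambda'}$. Frobenius reciprocity then gives
\[
\langle \Res\chi, \Res\chi\rangle_{\Gzero} = \langle \chi, \chi + \ve\chi\rangle_G = 1 + \delta_{\lambda,\lambda'}.
\]
So $\Res_{\Gzero}^G(S^\lambda)$ is simple when $\lambda \neq \lambda'$. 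When $\lambda = \lambda'$, it is a sum of two non-isomorphic simples.

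\textbf{Step 2: the eigenspace description when $\lambda = \lambda'$.} By \eqref{eq:associator}, $\phi = \phi^\lambda$ commutes with $\Gzero$. By \eqref{eq:inverse-associator}, $\phi^2 = \id$, so $\phi$ is diagonalizable with eigenvalues $\pm 1$, and its eigenspaces $S^{\lambda^\pm}$ are $\Gzero$-submodules. Since $\phi$ anticommutes with any $s \in \Gone$, the element $s$ swaps the two eigenspaces. Hence both eigenspaces are nonzero of equal dimension, and they are conjugate under $s$. Since the restriction has exactly two non-isomorphic simple summands, the decomposition into $\phi$-eigenspaces must coincide with it: each eigenspace is $\Gzero$-stable and nonzero, and a $\Gzero$-submodule of a multiplicity-free module is a sum of isotypic pieces. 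This also shows that $\dim S^\lambda$ is even, as used in Remark \ref{rem:Ve-V-even-dim}. Replacing $\phi$ by $-\phi$ swaps the labels $\pm$, which is harmless.

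\textbf{Step 3: the non-self-associate case.} When $\lambda \neq \lambda'$, we have $\Res(S^\lambda) \cong \Res(S^{\lambda'})$ because $\ve$ is trivial on $\Gzero$. The restriction is isomorphic to its conjugate under any $s \in \Gone$, because $s$ itself intertwines the module with its $s$-twist.

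\textbf{Step 4: exhaustiveness and uniqueness.} Every simple $\CGzero$-module $M$ occurs in the restriction of some constituent of $\Ind M$, so every simple arises in one of the two ways. For uniqueness, the multiplicity of $\Res(S^\mu)$'s constituents in $\Res(S^\lambda)$ is $\langle \chi_\lambda, \chi_\mu + \ve\chi_\mu\rangle$. This is nonzero only if $\mu \in \{\lambda, \lambda'\}$, so different $\sim$-classes give disjoint sets of simples. In case \eqref{item:type-F-simple}, $S^{\lambda^+} \not\cong S^{\lambda^-}$ by Step 1. Finally, a type-\eqref{item:type-E-simple} simple, being conjugation-stable, cannot equal a type-\eqref{item:type-F-simple} one, which is not; this also follows from the disjointness just noted.

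The main obstacle is the identification in Step 2 of the two simple summands with the $\phi$-eigenspaces. This is handled by the multiplicity-freeness obtained in Step 1.
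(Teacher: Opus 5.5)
Your proof is correct. It is essentially the standard index-two Clifford argument behind Fulton--Harris's Proposition 5.1, which the paper cites in place of giving its own proof: Frobenius reciprocity with $\Ind_{\Gzero}^G \Res_{\Gzero}^G(S^\lambda) \cong S^\lambda \oplus S^{\lambda'}$, followed by the norm count $1 + \delta_{\lambda,\lambda'}$. Your Step 2, which identifies the two summands with the $\pm 1$-eigenspaces of $\phi^\lambda$ using the multiplicity-freeness of the restriction, correctly supplies the part of the statement that the paper asserts without argument.
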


\begin{remark} \label{rem:S-lam-pm-to-mp}
If $\lambda = \lambda'$, then replacing $\phi^\lambda$ with $-\phi^\lambda$ swaps the labelings of $S^{\lambda^+}$ and $S^{\lambda^-}$.
\end{remark}

Define subsets $\calE(G)$ and $\calF(G)$ of $\Lambda(G)$ by
	\begin{equation} \label{eq:E-F-Irr-super} 
	\calE(G) = \{ \lambda \in \Lambda(G): \lambda \neq \lambda' \} \qquad \text{and} \qquad
	\calF(G) = \{ \lambda \in \Lambda(G): \lambda = \lambda' \}. 
	\end{equation}
Let $\sim$ be the equivalence relation on $\Lambda(G)$ generated by $\lambda \sim \lambda'$. Then $\Irr(\Gzero)$ is indexed by the set
	\begin{equation} \label{eq:G0-Irr-parameters}
	(\calE(G)/\!\!\sim) \cup \{ \lambda^+,\lambda^- : \lambda \in \calF(G) \}.
	\end{equation}

\begin{convention} \label{convention:even-iso-up-to-pm}
Given $V,W \in \Irr(\Gzero)$, write $V \simeq W$ if $V$ and $W$ are isomorphic as $\CGzero$-modules up to replacing one or more simple factors in $V$ of the form $S^{\lambda^\pm}$ for $\lambda \in \calF(G)$ with the corresponding ``opposite'' factor $S^{\lambda^{\mp}}$ (of the same dimension as $S^{\lambda^\pm}$) in $W$.
\end{convention}

\begin{remark} \label{rem:even-subgroup-branching}
Let $H$ be a reflection subgroup of $G$ (e.g., generated by a subset of the set of simple reflections in $G$). Let $\lambda \in \Lambda(G)$, and suppose for some non-negative integers $m_\mu$ that
	\[
	\Res_H^G(S^\lambda) = \bigoplus_{\mu \in \Lambda(H)} (S^\mu)^{\oplus m_\mu}.
	\]
	\begin{enumerate}

	\item If $\lambda \neq \lambda'$, then $\Res_{\Hzero}^{\Gzero}(S^\lambda) = \bigoplus_{\mu \in \Lambda(H)} (S^\mu)^{\oplus m_\mu}$ provides the $\CHzero$-module decomposition of the simple $\CGzero$-module $S^\lambda$, with the understanding that $S^\mu = S^{\mu^+} \oplus S^{\mu^-}$ if $\mu = \mu'$.
	
	\item Suppose $\lambda = \lambda'$. First observe that if $S^\mu$ occurs with multiplicity $m_\mu$ in $S^\lambda$, then $S^{\mu'} \cong (S^\mu)^\ve$ occurs with multiplicity $m_\mu$ in $(S^\lambda)^\ve \cong S^{\lambda'} = S^\lambda$. Thus, $m_\mu = m_{\mu'}$ for all $\mu \in \Lambda(H)$.
	
	Next let $t \in \Hone \subseteq \Gone$. On the one hand, the action of $t$ leaves invariant each $\CH$-module factor in $S^\lambda$. On the other hand, the action of $t$ maps the summands $S^{\lambda^+}$ and $S^{\lambda^-}$ of $S^\lambda$ one onto the other, and similarly for the summands $S^{\mu^+}$ and $S^{\mu^-}$ of $S^\mu$ for each $\mu \in \calF(H)$. This implies for each $\mu \in \calE(H)$ that the $\CHzero$-module multiplicity of $S^\mu \cong S^{\mu'}$ in $S^{\lambda^+}$ is the same as its multiplicity in $S^{\lambda^-}$, and implies for each $\mu \in \calF(H)$ that the multiplicity of $S^{\mu^\pm}$ in $S^{\lambda^+}$ is equal to the multiplicity of $S^{\mu^\mp}$ in $S^{\lambda^-}$.
	
	Combining the preceding observations, and applying Convention \ref{convention:even-iso-up-to-pm}, it follows that
		\begin{equation}
		\Res^{\Gzero}_{\Hzero}(S^{\lambda^\pm}) \simeq \Big[ \bigoplus_{\mu \in \calE(H)/\sim} (S^\mu)^{\oplus m_\mu} \Big] \oplus \Big[ \bigoplus_{\mu \in \calF(H)} (S^{\mu^\pm})^{\oplus m_\mu} \Big],
		\end{equation}
	where the sum over the set $\calE(H)/\!\!\sim$ means that we take the sum over a complete set of representatives in $\calE(H)$ for the distinct equivalence classes under the relation $\sim$.
	\end{enumerate}
\end{remark}

\begin{proposition}[{cf.\ \cite[Proposition 2.1.3]{DK:2026b}}] \label{prop:simple-supermodules}
Up to homogeneous isomorphism, each simple $\CG$-super\-module occurs uniquely in one of the following two ways:
	\begin{enumerate}
	\item \label{item:self-associate-G-supermodule} If $\lambda \neq \lambda'$, then there exists a Type Q simple $\CG$-supermodule $W^\lambda = W^{\lambda'}$, equipped with an odd involution $J^\lambda: W^\lambda \to W^\lambda$, such that $W^\lambda = S^\lambda \oplus S^{\lambda'}$ as an ungraded $\CG$-module, and $W_{\zero}^\lambda \cong S^\lambda \cong W_{\one}^\lambda$ as $\CGzero$-modules.

	\item If $\lambda = \lambda'$, then there exists a Type M simple $\CG$-supermodule $W^\lambda$ such that $W^\lambda = S^\lambda$ as an ungraded $\CG$-module, with $W_{\zero}^\lambda = S^{\lambda^+}$ and $W_{\one}^\lambda = S^{\lambda^-}$.
	\end{enumerate}
Each simple $\CG$-supermodule is uniquely determined up to an even isomorphism by its summands as a $\CGzero$-module and by the superdegrees in which those summands are concentrated.
\end{proposition}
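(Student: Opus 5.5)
We construct the simple supermodules from the classical Clifford theory in Lemma \ref{lemma:classical-Clifford}. The key tool is an operator that anticommutes with odd group elements and commutes with even ones. Using \eqref{eq:associator}, an associator $\phi^\lambda$ satisfies $\phi^\lambda(g\cdot v) = \ve(g)\, g\cdot\phi^\lambda(v)$. So it intertwines the $G$-action up to the sign character, which is exactly what a grading operator or an odd endomorphism must do.

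\emph{Case $\lambda \neq \lambda'$.} Set $W^\lambda = S^\lambda \oplus S^{\lambda'}$. Normalize $\phi^\lambda,\phi^{\lambda'}$ as in \eqref{eq:inverse-associator}, and define $P = \phi^{\lambda'} \oplus \phi^\lambda$, acting as $(v,w) \mapsto (\phi^{\lambda'}(w), \phi^\lambda(v))$. This gives a linear involution on $W^\lambda$ that commutes with $\Gzero$ and anticommutes with $\Gone$. Its $\pm1$ eigenspaces therefore give a $\Z_2$-grading with $\Gone$ acting oddly, so $W^\lambda$ is a $\CG$-supermodule.

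Next define $J^\lambda = \id_{S^\lambda} \oplus (-\id_{S^{\lambda'}})$. It commutes with $G$, anticommutes with $P$, and squares to $1$, so it is an odd involution commuting with the action. (Whether it supercommutes or commutes with odd elements depends on sign conventions; if necessary, modify it by $\sqrt{-1}$ as in \cite[\S2]{DK:2025}.) Projecting onto $S^\lambda$ restricts to a $\CGzero$-isomorphism $W^\lambda_{\zero} \cong S^\lambda$, and similarly $W^\lambda_{\one} \cong S^\lambda$.

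Simplicity as a supermodule follows because any graded submodule is an ungraded submodule, hence $0$, $S^\lambda$, $S^{\lambda'}$, or everything. Neither $S^\lambda$ nor $S^{\lambda'}$ is $P$-stable, since $P$ swaps them. The existence of the odd automorphism $J^\lambda$ shows $W^\lambda$ is of type Q.

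\emph{Case $\lambda = \lambda'$.} Grade $S^\lambda$ by the eigenspaces of $\phi^\lambda$, which is an involution by \eqref{eq:inverse-associator}. This gives $W_{\zero} = S^{\lambda^+}$ and $W_{\one} = S^{\lambda^-}$, and $\Gone$ acts oddly because of the anticommutation. The module is simple because it is already ungraded-simple. For the type, an odd endomorphism would be a $G$-map $S^\lambda \to S^\lambda$, hence a scalar by Schur's lemma, hence $0$. So the type is M.

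For completeness and uniqueness, take any simple $\CG$-supermodule $W$. Its grading operator $P_W$ gives an isomorphism $W \cong W\otimes\ve$ as ungraded modules. If $W$ contains an ungraded summand $S^\lambda$, then $P_W(S^\lambda) \cong S^{\lambda'}$ is also a summand, and $S^\lambda + P_W(S^\lambda)$ is a graded submodule, hence equals $W$. This forces $W$ to be one of the modules above, with the cases split by whether $\lambda=\lambda'$.

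The final assertion, that $W$ is determined by its $\CGzero$-summands and their degrees, follows from Lemma \ref{lemma:classical-Clifford}. The even part $W_{\zero}$ determines $\lambda$ up to $\sim$, and it also distinguishes $S^{\lambda^+}$ from $S^{\lambda^-}$. A parity shift swaps these two (compare Remark \ref{rem:S-lam-pm-to-mp}), so specifying which summand sits in which degree determines $W$ up to even isomorphism.

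The main obstacle is the uniqueness step. There one must show the constructed modules exhaust the simple supermodules up to \emph{homogeneous} isomorphism. One must also track precisely when a parity shift yields an even versus an odd isomorphism: type Q modules are isomorphic to their shifts, while type M modules are not.
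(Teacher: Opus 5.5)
\textbf{Gap: exhaustion and uniqueness are asserted, not proved.} Your existence half is correct. The associators serve as grading operators: $P=\phi^{\lambda'}\oplus\phi^{\lambda}$ on $S^\lambda\oplus S^{\lambda'}$, and $\phi^\lambda$ itself when $\lambda=\lambda'$. Your type computations are also right. Your $J^\lambda=\id\oplus(-\id)$ is already what is needed, since $Q(W,J)$ in \eqref{eq:Q(V)} consists of maps \emph{commuting} with $J$. So no $\sqrt{-1}$ twist is required; in a supercommuting convention the fix would be $\sqrt{-1}\,J^\lambda P$, not a scalar multiple of $J^\lambda$.

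The gap is the step you flag but do not carry out: knowing $W=U+P_W(U)$ with $U\cong S^\lambda$ does not yet force $W$ onto your list. Three things are missing.
\begin{enumerate}
\item If $\lambda=\lambda'$, then $P_W(U)\cong S^\lambda$ may be a different copy, so a priori $W\cong S^\lambda\otimes\C^2$ as an ungraded module. You must exclude this. Under that identification, $P_W\circ(\phi^\lambda\otimes\id)$ commutes with $G$, hence equals $\id\otimes M$. So $P_W=\phi^\lambda\otimes M$, and $S^\lambda\otimes\C x$ for an eigenvector $x$ of $M$ is a proper graded submodule.
\item Even when the ungraded structure is right, you still need an \emph{even} isomorphism intertwining $P$ and $P_W$. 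For $\lambda\neq\lambda'$, take a $G$-isomorphism $\alpha:S^\lambda\to U$ and put $\beta=P_W\circ\alpha\circ\phi^{\lambda'}:S^{\lambda'}\to P_W(U)$. Then check $(\alpha\oplus\beta)\circ P=P_W\circ(\alpha\oplus\beta)$ using \eqref{eq:inverse-associator} and $P_W^2=1$. For $\lambda=\lambda'$ with $P_W(U)=U$, the transported $P_W$ is an associator with square $1$, hence $\pm\phi^\lambda$, giving $W^\lambda$ or its parity shift.
\item The last sentence of the statement needs this classification. It also needs that the even parts $S^\lambda$, $S^{\lambda^+}$, $S^{\lambda^-}$ are pairwise non-isomorphic (Lemma \ref{lemma:classical-Clifford}), and that $J^\lambda$ identifies $W^\lambda$ with its parity shift when $\lambda\neq\lambda'$. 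As written, you only assert it.
\end{enumerate}

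\textbf{A cleaner way to close it.} The paper does not reprove this proposition; it cites \cite{DK:2026b}. The final sentence of the statement suggests working with $W_{\zero}$ from the start. Fix a reflection $s$; it is an invertible odd element, so $W_{\one}=sW_{\zero}$ for every supermodule $W$. Then:
\begin{itemize}
\item If $W$ is simple, $W_{\zero}$ is a simple $\CGzero$-module, since a $\CGzero$-submodule $V\subseteq W_{\zero}$ generates the graded submodule $V\oplus sV$.
\item Any $\CGzero$-isomorphism $f_0:W_{\zero}\to W'_{\zero}$ extends to an even $\CG$-isomorphism by setting $f(sw)=sf_0(w)$.
\item $W\cong\Ind_{\Gzero}^G(W_{\zero})$, since the natural map from the induced module is surjective and both sides have dimension $2\dim W_{\zero}$.
\end{itemize}
Combined with Lemma \ref{lemma:classical-Clifford}, this gives completeness and uniqueness up to even isomorphism in one stroke. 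Your explicit constructions then serve only to identify which supermodule has which even part and which type.
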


Let $\Irr_s(G)$ be the set of isomorphism classes (up to homogeneous iso\-mor\-phism) of the simple $\CG$-super\-modules. Then $\Irr_s(G) = \Irr_Q(G) \cup \Irr_M(G)$, where $\Irr_Q(G)$ (resp., $\Irr_M(G)$) is the set of isomorphism classes of Type Q (resp., Type M) simple $\CG$-supermodules. The sets $\Irr_Q(G)$ and $\Irr_M(G)$ are indexed by the sets $(\calE(G)/\!\!\sim)$ and $\calF(G)$, respectively. In particular,
	\begin{equation} \label{eq:Irrs(G)-size}
	\abs{\Irr_s(G)} = \abs{ {\Lambda(G)/\!\!\sim} }
	\end{equation}
is the number of equivalence classes in $\Lambda(G)$ under the relation $\lambda \sim \lambda'$, or equivalently, the number of equivalence classes in $\Irr(G)$ under the relation $V \sim V^\ve$.

Given $W \in \Irr_s(G)$, let $\rho_W: \CG \to \End(W)$ denote the corresponding supermodule structure map. Recall that if $V$ is a vector superspace equipped with an odd involution $J$, then
	\begin{equation} \label{eq:Q(V)}
	Q(V) = Q(V,J) = \set{ \theta \in \End(V) : J \circ \theta = \theta \circ J}
	\end{equation}
is a simple subsuperalgebra of $\End(V)$.

\begin{corollary}[{cf.\ \cite[Corollary 2.1.5]{DK:2026b}}] \label{cor:super-artin-wedderburn-CG}
The sum of the structure maps $\rho_W$ induces a super\-algebra isomorphism
	\begin{equation} \label{eq:super-artin-wedderburn-CG}
	\CG \cong 
	\Big[ \bigoplus_{W \in \Irr_Q(G)} Q(W) \Big]
	\oplus 
	\Big[ \bigoplus_{W \in \Irr_M(G)} \End(W) \Big]. 
	\end{equation}
\end{corollary}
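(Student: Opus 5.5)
The plan is to build the isomorphism \eqref{eq:super-artin-wedderburn-CG} from the graded Wedderburn theorem, using Proposition \ref{prop:simple-supermodules} for the list of simple supermodules. First I would check that $\CG$ is a semisimple superalgebra, i.e., has no nonzero graded two-sided nilpotent ideals. This is immediate: its ungraded Jacobson radical is already zero by Maschke's theorem. Each $\rho_W$ is an even superalgebra homomorphism $\CG \to \End(W)$, so the sum $\rho = \bigoplus_W \rho_W$ is an even superalgebra map. The remaining work is to identify its image and show it is bijective.

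Next I would pin down the image of each factor. In the Type M case, $W = W^\lambda$ with $\lambda = \lambda'$. Then $W = S^\lambda$ is simple as an ungraded $\CG$-module, so the Jacobson density theorem (or Burnside's theorem) gives $\rho_W(\CG) = \End(W)$. In the Type Q case, $W = W^\lambda = S^\lambda \oplus S^{\lambda'}$ carries the odd involution $J^\lambda$, which supercommutes with the action of $G$. Because the action is by even operators, $J^\lambda$ in fact commutes with every $\rho_W(g)$, so $\rho_W(\CG) \subseteq Q(W,J^\lambda)$. On the other hand, $\rho_W(\CG) \cong \End(S^\lambda) \oplus \End(S^{\lambda'})$ by ungraded Artin--Wedderburn, since $S^\lambda \not\cong S^{\lambda'}$. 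That has dimension $2(\dim S^\lambda)^2$, which equals $\dim Q(W)$ for $\dim W = 2\dim S^\lambda$. Hence $\rho_W(\CG) = Q(W)$.

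Then I would show that $\rho$ is injective and surjective onto the direct sum. Injectivity holds because every ungraded simple $S^\mu$ appears as a summand of some $W$: as $S^\mu$ itself if $\mu = \mu'$, or inside $W^\mu$ otherwise. So $\ker\rho$ annihilates every simple $\CG$-module and is zero. For surjectivity I would compare dimensions. Type M factors contribute $(\dim S^\lambda)^2$ each. Each Type Q factor, indexed by a class $\{\lambda,\lambda'\}$, contributes $2(\dim S^\lambda)^2 = (\dim S^\lambda)^2 + (\dim S^{\lambda'})^2$. Summing over $\Lambda(G)/\!\!\sim$ therefore gives $\sum_{\mu \in \Lambda(G)} (\dim S^\mu)^2 = \abs{G} = \dim \CG$.

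The main obstacle is mild: confirming that $J^\lambda$ genuinely commutes with $\rho_W(\CG)$, so that the image lands in $Q(W)$ as defined in \eqref{eq:Q(V)}. I would read this off from the construction in Proposition \ref{prop:simple-supermodules}, or from \cite[Proposition 2.1.3]{DK:2026b}. Concretely, $J^\lambda$ is built from the associator $\phi^\lambda$ together with the grading, and \eqref{eq:associator} combined with the sign conventions gives the commutation. Dimension counting makes it unnecessary to check that distinct factors are "independent" separately.
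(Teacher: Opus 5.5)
Your overall plan works, and it is more hands-on than the paper's treatment. The paper gives no proof of its own: it cites \cite[Corollary 2.1.5]{DK:2026b} and attributes the decomposition to the graded Artin--Wedderburn theorem together with the classification in Proposition \ref{prop:simple-supermodules}. Your Burnside argument for Type M, your injectivity argument, and the count $\sum_{\mu \in \Lambda(G)} (\dim S^\mu)^2 = \abs{G}$ are all fine.

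The problem is the one step with real content, $\rho_W(\CG) \subseteq Q(W,J^\lambda)$. It is not true that $G$ acts on $W$ by even operators. The map $\rho_W$ is even, but every reflection lies in $\Gone$ and so acts by an \emph{odd} operator. An odd $J^\lambda$ that supercommutes with the action (standard sign convention) therefore \emph{anticommutes} with $\rho_W(s)$ for every reflection $s$. Your reasoning would then place $\rho_W(\CG)$ in the super-centralizer of $J^\lambda$, which in a suitable homogeneous basis has blocks $A,B,-B,A$. That is not $Q(W,J^\lambda)$, which \eqref{eq:Q(V)} defines by ordinary commutation (blocks $A,B,B,A$). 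As written, the containment is not established, and the appeal to ``sign conventions'' hides exactly this point.

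The fix is short.
\begin{itemize}
\item Let $e_\lambda, e_{\lambda'}$ be the primitive central idempotents of $\CG$ attached to $S^\lambda, S^{\lambda'}$. Put $J = \rho_W(e_\lambda - e_{\lambda'})$, which is $\id$ on $S^\lambda$ and $-\id$ on $S^{\lambda'}$. Then $J^2 = \id_W$, and $J$ commutes with $\rho_W(\CG)$ in the ordinary sense because $e_\lambda - e_{\lambda'}$ is central.
\item $J$ is odd. The parity automorphism $g \mapsto \ve(g) g$ of $\CG$ interchanges $e_\lambda$ and $e_{\lambda'}$, since $(S^\lambda)^\ve \cong S^{\lambda'}$. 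So $e_\lambda - e_{\lambda'}$ is an odd element of $\CG$, and $\rho_W$ is even.
\item The ordinary centralizer of $J$ in $\End(W)$ is the block-diagonal algebra $\End(S^\lambda) \oplus \End(S^{\lambda'})$, which equals $\rho_W(\CG)$. Hence $\rho_W(\CG) = Q(W,J)$ outright, with no dimension count needed.
\item Any odd involution $J'$ with $\rho_W(\CG) \subseteq Q(W,J')$ must equal $\pm J$, so this is the involution the statement needs.
\end{itemize}
Equivalently, suppose you start from an odd $J'$ with $(J')^2 = \id_W$ that supercommutes with the action, and let $\pi$ be the parity operator on $W$. Then $\sqrt{-1}\, J' \circ \pi$ is an odd involution commuting with $\rho_W(\CG)$ in the ordinary sense.

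With this repair, the rest of your argument goes through unchanged.
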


\subsection{Lie superalgebra preliminaries} \label{subsec:lie-superalgebra-prelim}

Given a superspace $V$, write $\gl(V)$ for the set $\End(V)$ considered as a Lie super\-algebra via the super commutator $[x,y] = xy - (-1)^{\ol{x} \cdot \ol{y}}yx$. Fixing a homogeneous basis for $V$, $\gl(V)$ identifies with the matrix Lie superalgebra
	\[
	\gl(m|n) := \set{ \left[	\begin{array}{c|c}
								A & B \\
								\hline
								C & D
								\end{array} \right] : A \in M_m(\C), B \in M_{m \times n}(\C), C \in M_{n \times m}(\C), D \in M_n(\C)}.
	\]
If $V$ is a superspace equipped with an odd involution $J : V \to V$, write $\fq(V)$ for the set $Q(V)$ considered as a Lie super\-algebra via the super commutator. If $\dim(\Vzero) = n$, then via an appropriate choice of basis one has
	\[
	\fq(V) \cong \fq(n) := \set{ \left[	\begin{array}{c|c}
							A & B \\
							\hline
							B & A
							\end{array} \right] : A \in M_n(\C), B \in M_n(\C) }.
	\]
For an arbitrary Lie super\-algebra $\g$, denote its derived subalgebra $[\g,\g]$ by $\fD(\g)$. Then $\fD(\gl(V))$ is the special linear Lie superalgebra $\fsl(V)$, and
	\[
	\fD(\glmn) = \fsl(m|n) := \set{ \left[	\begin{array}{c|c}
								A & B \\
								\hline
								C & D
								\end{array} \right] \in \glmn : \tr(A) - \tr(D) = 0}.
	\]
Let $\sq(V) = \fD(\fq(V))$. One can show that
	\[
	\fD(\fq(n)) = \sq(n) := \set{ \left[	\begin{array}{c|c}
								A & B \\
								\hline
								B & A
								\end{array} \right] \in \fq(n) : \tr(B) = 0 }.
	\]

\begin{lemma}[{\cite[Lemma 2.2.1]{DK:2026b}}] \label{lem:generated-by-odd} \ 
	\begin{enumerate}
	\item If $m \geq 2$, then $\fsl(m|m)$ is generated as a Lie superalgebra by $\fsl(m|m)_{\one}$.
	\item If $m \geq 3$, then $\sq(m)$ is generated as a Lie superalgebra by $\sq(m)_{\one}$.
	\end{enumerate}
\end{lemma}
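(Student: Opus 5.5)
The plan is to show in each case that the subalgebra $\mathfrak{k}$ generated by the odd part $L_{\one}$ of $L = \fsl(m|m)$ or $L = \sq(m)$ is all of $L$. Since $\mathfrak{k} \supseteq L_{\one}$, it suffices to show $[L_{\one},L_{\one}] = L_{\zero}$. We compute directly with elementary matrices. Note first that $[L_{\one},L_{\one}] \subseteq L_{\zero}$ is automatic, and that it is an $L_{\zero}$-submodule of $L_{\zero}$, i.e.\ an ideal of the even part. So the work is to show this ideal is everything.

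For $\fsl(m|m)$, the even part is $\{\operatorname{diag}(A,D) : \tr A = \tr D\}$ and the odd part is all off-diagonal blocks. Write $E_{ij}$ for matrix units in the $2m \times 2m$ matrix, with indices $1,\dots,m$ even and $\bar 1,\dots,\bar m$ odd. For odd $E_{i\bar k}$ and $E_{\bar k j}$ we have
\[
[E_{i\bar k},E_{\bar k j}] = E_{ij} + \delta_{ij} E_{\bar k\bar k}.
\]
Taking $i \neq j$ produces every off-diagonal $E_{ij}$ in the $A$-block. Similarly, $[E_{\bar k i},E_{i \bar l}] = E_{\bar k\bar l}$ for $k \neq l$ gives the off-diagonal entries of the $D$-block. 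Taking $i=j$ yields $E_{ii}+E_{\bar k\bar k}$ for all $i,k$. Together with the off-diagonal parts, these span the diagonal matrices with $\tr A = \tr D$. Indeed, the differences $E_{ii}-E_{jj}$ and $E_{\bar k\bar k}-E_{\bar l\bar l}$ arise as differences of these elements, and one element $E_{11}+E_{\bar1\bar1}$ completes the span. Here $m \geq 2$ is used only to have off-diagonal indices available; in fact $m \ge 1$ already suffices for the span, and the hypothesis matters only for the conventions fixed in the source.

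For $\sq(m)$, write even elements as $(A,0)$ and odd elements as $(0,B)$ with $\tr B = 0$. The bracket of two odd elements is the anticommutator $(0,B)\cdot(0,B') + (0,B')\cdot(0,B)$, which gives the even element $(BB'+B'B,0)$. The task is to show that the anticommutators $BB'+B'B$ with $B,B' \in \fsl(m)$ span $M_m(\C) = \gl(m)$. Taking $B = E_{ik}$ and $B' = E_{kj}$ with $i,j,k$ distinct gives $E_{ij}$; this needs $m \geq 3$, and is the key point where that hypothesis enters. For diagonal elements, take $B = E_{ik}$ and $B' = E_{ki}$ with $i \neq k$ to get $E_{ii}+E_{kk}$. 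Then $(E_{12}+E_{21})^2 \cdot 2$-type elements, or simply the span of $\{E_{ii}+E_{kk} : i \neq k\}$, give all diagonal matrices when $m \geq 3$: the $m\times m$ system is invertible for $m\ge3$, whereas for $m=2$ one only gets the identity. Hence the even part $\fq(m)_{\zero} \cong \gl(m)$ lies in the generated subalgebra, which is therefore all of $\sq(m)$.

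The main obstacle is the diagonal part in the $\sq$ case, where one must confirm that $\{E_{ii}+E_{kk}\}_{i\ne k}$ spans all diagonal matrices. This holds because the matrix $J - I$ (with $J$ the all-ones matrix) on $\C^m$ is invertible for $m \geq 3$. The remaining steps are routine bracket computations.
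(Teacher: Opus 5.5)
\textbf{Verdict.} Your proof is correct. This paper does not prove the lemma; it quotes it from \cite{DK:2026b}, so there is no in-paper argument to compare against.

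\textbf{What works.} Reducing to the equality $[L_{\one},L_{\one}] = L_{\zero}$ and checking it with matrix units is the natural direct argument. The following computations are all right:
\begin{itemize}
\item $[E_{i\bar k},E_{\bar k j}] = E_{ij}+\delta_{ij}E_{\bar k\bar k}$;
\item $E_{ik}E_{kj}+E_{kj}E_{ik} = E_{ij}$ for distinct $i,j,k$;
\item $E_{ik}E_{ki}+E_{ki}E_{ik} = E_{ii}+E_{kk}$.
\end{itemize}
Your remark that part (1) already holds for $m=1$ is also correct.

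\textbf{One justification to fix.} The invertibility of $J-I$ is not the relevant fact. $J-I$ is invertible for every $m \geq 2$ (its eigenvalues are $m-1$ and $-1$), so it cannot separate $m\geq 3$ from $m=2$. Read literally, your reason would also prove the false $m=2$ case.

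The relevant matrix is $N^{t}N = J+(m-2)I$, where $N$ is the matrix whose rows are the vectors $e_i+e_k$ for $i<k$. Its eigenvalues are $2m-2$ and $m-2$, so it is invertible exactly when $m\geq 3$. More simply, for distinct $i,j,k$ you have
\[
2E_{ii} = (E_{ii}+E_{jj})+(E_{ii}+E_{kk})-(E_{jj}+E_{kk}).
\]

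The failure at $m=2$ is genuine. For traceless $2\times 2$ matrices one has $BB'+B'B = \tr(BB')\cdot I$, so $[\sq(2)_{\one},\sq(2)_{\one}]$ is only the scalars.

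Also delete the garbled phrase about ``$(E_{12}+E_{21})^2\cdot 2$-type elements''.
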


Now write $\Lie(\CG)$ for the superalgebra $\CG$ considered as a Lie superalgebra via the super commutator $[x,y] = xy - (-1)^{\ol{x} \cdot \ol{y}}yx$, and set $\fD(\CG) = \fD(\Lie(\CG))$.\footnote{We trust that no serious confusion will arise between our use of this notation both here and in Section \ref{S:LA-by-reflections}.} Then \eqref{eq:super-artin-wedderburn-CG} restricts to Lie super\-algebra isomorphisms
	\begin{align} 
	\Lie(\CG) &\cong 
	\Big[ \bigoplus_{W \in \Irr_Q(G)} \fq(W) \Big]
	\oplus 
	\Big[ \bigoplus_{W \in \Irr_M(G)} \gl(W) \Big], \label{eq:LieCG} \\
	\fD(\CG) & \cong 
	\Big[ \bigoplus_{W \in \Irr_Q(G)} \sq(W) \Big] 
	\oplus 
	\Big[ \bigoplus_{W \in \Irr_M(G)} \fsl(W) \Big]. \label{eq:D(LieCG)}
	\end{align}
In particular, $\dim(\fD(\CG)) = \abs{G} - \abs{\Irr_Q(G)} - \abs{\Irr_M(G)} = \abs{G} - \abs{\Irr_s(G)}$.

	
\subsection{General structure of the Lie superalgebra generated by reflections} \label{subsec:Lie-superalgebra-generalities}

Now let $\g$ be the Lie subsuperalgebra of $\Lie(\CG)$ generated by the set $S$ of reflections in $G$.\footnote{In general, the subset $S_0$ of \emph{simple} reflections in $G$ generates a proper Lie sub-superalgebra of $\g$, in contrast to the situation of Remark \ref{remark:s-generated-by-S0}.} Our goal in the remainder of the paper is to show (for $G$ irreducible) that
	\begin{equation} \label{eq:LSA-goal}
	\g = \fD(\CG) + (\CSrm)^G.
	\end{equation}
In \cite[\S\S2.3--2.5]{DK:2026b}, we considered (for $G$ not necessarily irreducible) the six assumptions below. Note that \ref{item:S-union-conj-classes} is true by our assumptions on $G$, and $\gzero \subseteq \Lie(\CGzero)$ is an ordinary Lie algebra.
	\begin{enumerate}[label={(A\arabic*)}]
	\item \label{item:S-union-conj-classes} The set $S$ is a union of conjugacy classes in $G$, say $S = C_1 \cup \cdots \cup C_t$.

	\item \label{item:gzero-generates-CGzero} There exists $S' \subseteq \gzero$ such that $\Gzero = \subgrp{S'}$. In particular, $S'$ generates $\CGzero$ as an algebra.

	\item \label{item:S'-one-class-gen-G0} The set $S'$ consists of a single conjugacy class of order-two elements in $\Gzero$.

	\item \label{item:inductive-subgroup} There exist a subsupergroup $H \subseteq G$ and a subset $S_H \subseteq S$ such that:
		\begin{enumerate}
		\item \label{item:inductive-A1-A2} \ref{item:S-union-conj-classes} and \ref{item:gzero-generates-CGzero} hold for $H$ and the Lie subsuperalgebra $\fh \subseteq \Lie(\CH)$ generated by $S_H$;

		\item \label{item:inductive-D(hzero)} $\fh_{\zero} = \fD(\CH)_{\zero}$, and hence \eqref{eq:D(LieCG)} for the group $H$ restricts to an isomorphism
			\begin{equation} \label{eq:D(hzero)}
			\fD(\hzero) \cong \bigoplus_{V \in \Irr(\Hzero)} \fsl(V),
			\end{equation}
induced by the sum of the simple $\CHzero$-module structure maps $\rho_V: \CHzero \to \End(V)$;

		\item there are no simple $\CHzero$-modules of dimension $2$; and

		\item if $V$ is a simple $\CGzero$-module with $\dim(V) > 1$, then $\Res^{\Gzero}_{\Hzero}(V)$ contains some simple $\CHzero$-module $W$ with $\dim(W) > 1$.
		\end{enumerate}

	\item \label{item:D(gzero)-to-sl(V)} For each $V \in \Irr(\Gzero)$, the structure map $\rho_V: \CGzero \to \End(V)$ restricts to a surjective Lie algebra map $\rho_V: \fD(\gzero) \twoheadrightarrow \fsl(V)$.
	
	\item \label{item:no-G-simple-dim-2} There are no (ordinary, non-super) simple $\CG$-modules of dimension $2$.
	\end{enumerate}

Under combinations of these six assumptions, we then proved the following general results:
	
\begin{lemma}[{\cite[Lemma 2.3.1]{DK:2026b}}] \label{lemma:g-in-D(g)+span}
Suppose \ref{item:S-union-conj-classes} holds. Then
	\begin{equation} \label{eq:LSA-inclusion}
	\g \subseteq \fD(\CG) + (\CSrm)^G.
	\end{equation}
\end{lemma}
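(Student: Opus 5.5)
The plan is to show that the generating set lies in the right-hand side of \eqref{eq:LSA-inclusion}, and that the right-hand side is a Lie sub-superalgebra of $\Lie(\CG)$. Since $\g$ is the smallest Lie sub-superalgebra containing $S$, these two facts give the inclusion.

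First, the right-hand side is closed under the bracket. $\fD(\CG)$ is an ideal of $\Lie(\CG)$. By Lemma \ref{lemma:g-in-D(g)+span}'s hypothesis \ref{item:S-union-conj-classes}, the space $(\CSrm)^G$ is spanned by the class sums $\calC_1,\ldots,\calC_t$, which are odd. So any bracket of two elements of $\fD(\CG) + (\CSrm)^G$ equals a bracket of two class sums plus an element of $\fD(\CG)$. Any bracket at all lies in $\fD(\CG) = [\Lie(\CG),\Lie(\CG)]$ by definition, so $[\calC_i,\calC_j] \in \fD(\CG)$. (In fact $\calC_i$ is central in the ordinary algebra $\CG$, so $[\calC_i,\calC_j] = 2\calC_i\calC_j$, but we only need that it lies in the derived subalgebra.) Hence $\fD(\CG) + (\CSrm)^G$ is a Lie sub-superalgebra.

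Second, each reflection lies in the right-hand side. Fix $s \in C_i$. I would show that $s - \frac{1}{\abs{C_i}}\calC_i \in \fD(\CG)$. Since $\calC_i = \sum_{g \in G/C_G(s)} gsg^{-1}$ up to indexing, it suffices to show that $s - gsg^{-1} \in \fD(\CG)$ for each $g \in G$. If $g$ is even, then $[g, s g^{-1}] = gsg^{-1} - sg^{-1}g = gsg^{-1} - s$, since $g$ is even and the super bracket is the ordinary commutator. If $g$ is odd, then $sg^{-1}$ is even, so again $[g,sg^{-1}] = gsg^{-1} - sg^{-1}g = gsg^{-1}-s$. In both cases $s - gsg^{-1} \in \fD(\CG)$. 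Averaging over the conjugates gives $s - \frac{1}{\abs{C_i}}\calC_i \in \fD(\CG)$, and therefore $s \in \fD(\CG) + (\CSrm)^G$.

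Combining the two steps yields $\g \subseteq \fD(\CG) + (\CSrm)^G$. The only point needing care is the sign bookkeeping in the super commutator, which is handled by bracketing with an element of $G$ so that one factor is always even. I expect no real obstacle beyond that.
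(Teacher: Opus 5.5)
Your proof is correct: every bracket lies in $\fD(\CG)$ by definition, and the identity $gsg^{-1} - s = [g, sg^{-1}]$ (valid for either parity of $g$, as you check) shows that each $s \in C_i$ is congruent to $\frac{1}{\abs{C_i}}\calC_i$ modulo $\fD(\CG)$, with $\calC_i \in (\CSrm)^G$ by \ref{item:S-union-conj-classes}. The paper itself does not reprove this lemma and only quotes it from \cite[Lemma 2.3.1]{DK:2026b}; your conjugation argument is the natural direct proof of it.
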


\begin{lemma}[{\cite[Lemma 2.3.2]{DK:2026b}}] \label{lemma:lie-algebra-center}
Suppose \ref{item:S-union-conj-classes} and \ref{item:gzero-generates-CGzero} hold. Then $Z(\gzero) = \gzero \cap Z(\CGzero)$, and the map $p: \CGzero \to Z(\CGzero)$, defined by $p(z) = \frac{1}{\abs{\Gzero}}  \sum_{g \in \Gzero} g z g^{-1}$, restricts to a map $p: \gzero \to Z(\gzero)$.
\end{lemma}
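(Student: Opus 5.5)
The plan is to prove the two inclusions giving $Z(\gzero) = \gzero \cap Z(\CGzero)$ separately. The statement about $p$ should then follow from a conjugation-invariance property of $\g$ that \ref{item:S-union-conj-classes} supplies.

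For the inclusion $\gzero \cap Z(\CGzero) \subseteq Z(\gzero)$: an element of $\gzero$ that is central in $\CGzero$ commutes with all of $\gzero \subseteq \CGzero$. Since everything involved is even, the super commutator is the ordinary commutator, so such an element lies in $Z(\gzero)$.

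For the reverse inclusion, take $z \in Z(\gzero)$. By \ref{item:gzero-generates-CGzero} there is a subset $S' \subseteq \gzero$ with $\Gzero = \subgrp{S'}$, so $S'$ generates $\CGzero$ as an associative algebra. Since $z$ commutes (as an ordinary commutator, all terms being even) with every element of $S'$, it commutes with every product of elements of $S'$, hence with all of $\CGzero$. As $z \in \gzero \subseteq \CGzero$, this gives $z \in Z(\CGzero)$.

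For the claim about $p$, I would first show that $\g$ is stable under conjugation by every $g \in G$. Conjugation $x \mapsto gxg^{-1}$ is an even algebra automorphism of $\CG$, since $\ve$ is a homomorphism, so it is a Lie superalgebra automorphism of $\Lie(\CG)$. By \ref{item:S-union-conj-classes}, $S$ is a union of conjugacy classes, so this automorphism maps $S$ onto $S$. It therefore maps the Lie sub-superalgebra generated by $S$ onto itself, i.e.\ $g\g g^{-1} = \g$. Because conjugation preserves parity, $g \gzero g^{-1} = \gzero$.

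Consequently, for $z \in \gzero$ the average $p(z) = \frac{1}{\abs{\Gzero}} \sum_{g \in \Gzero} gzg^{-1}$ lies in $\gzero$. It is invariant under conjugation by $\Gzero$, and $\Gzero$ spans $\CGzero$, so $p(z) \in Z(\CGzero)$. Thus $p(\gzero) \subseteq \gzero \cap Z(\CGzero) = Z(\gzero)$ by the first part.

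There is no real obstacle here. The only point needing care is the stability of $\g$ under conjugation by $G$. This must come from \ref{item:S-union-conj-classes} together with the fact that inner automorphisms are even superalgebra automorphisms, rather than from any formula like Remark \ref{remark:s-generated-by-S0}, which behaves differently in the super setting.
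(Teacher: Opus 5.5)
Your proposal is correct and is essentially the standard argument: both inclusions follow because $S' \subseteq \gzero$ generates $\CGzero$ as an algebra and brackets of even elements are ordinary commutators, while the claim about $p$ follows from $g\gzero g^{-1} = \gzero$, which comes from \ref{item:S-union-conj-classes} together with the evenness of inner automorphisms. The paper defers this proof to \cite[Lemma 2.3.2]{DK:2026b}, but it uses the same conjugation-stability reasoning for $\g$ and $\gzero$ in the proof of Lemma \ref{lemma:S'-generator}.
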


\begin{proposition}[{\cite[Proposition 2.3.5]{DK:2026b}}] \label{prop:g0-reductive}
Suppose \ref{item:S-union-conj-classes} and \ref{item:gzero-generates-CGzero} hold. Then the Lie algebra $\gzero$ is reductive, $\fD(\gzero)$ is semi\-simple, $\gzero = Z(\gzero) \oplus \fD(\gzero)$, and $\fD(\gzero) = \ker(p)$.
\end{proposition}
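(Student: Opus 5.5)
The plan is to realize $\gzero$ as a Lie algebra of operators with a faithful, completely reducible representation. Then reductivity follows from the standard criterion (a finite-dimensional Lie algebra over $\C$ with a faithful completely reducible finite-dimensional representation is reductive). After that, identify the derived subalgebra with $\ker(p)$ using Lemma \ref{lemma:lie-algebra-center}.

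\emph{Step 1 (faithful completely reducible module).} Let $\gzero$ act on $M = \CGzero$ by left multiplication. Since $\gzero \subseteq \CGzero$ and $1 \in \CGzero$, this action is faithful. By \ref{item:gzero-generates-CGzero}, the set $S' \subseteq \gzero$ generates $\CGzero$ as an associative algebra. Hence the associative subalgebra of $\End(M)$ generated by the image of $\gzero$ equals the image of $\CGzero$. Consequently a subspace of any $\CGzero$-module is $\gzero$-stable if and only if it is a $\CGzero$-submodule.

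By Maschke's theorem $\CGzero$ is semisimple, so every $\CGzero$-submodule of $M$ has a $\CGzero$-stable complement. Thus $M$ is a completely reducible $\gzero$-module. The criterion then gives that $\gzero$ is reductive. Hence $\fD(\gzero)$ is semisimple and $\gzero = Z(\gzero) \oplus \fD(\gzero)$.

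\emph{Step 2 ($\fD(\gzero) \subseteq \ker(p)$).} Here $p$ is the averaging (Reynolds) operator for the conjugation action of $\Gzero$, so it is the $\Gzero$-equivariant projection of $\CGzero$ onto its conjugation invariants $Z(\CGzero)$. For $g,h \in \Gzero$ one has $gh - hg = gh - h(gh)h^{-1}$, which $p$ kills. So $p$ vanishes on $[\CGzero,\CGzero]$. Because $\gzero$ is a purely even subalgebra, its bracket is the ordinary commutator, and therefore $\fD(\gzero) \subseteq [\CGzero,\CGzero] \subseteq \ker(p)$.

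\emph{Step 3 (equality).} By Lemma \ref{lemma:lie-algebra-center}, $p$ maps $\gzero$ into $Z(\gzero) = \gzero \cap Z(\CGzero)$ and restricts to the identity on $Z(\gzero)$. Hence $\gzero = Z(\gzero) \oplus (\gzero \cap \ker p)$. We have $\fD(\gzero) \subseteq \gzero \cap \ker p$, and both are complements to $Z(\gzero)$ in $\gzero$ (the first by Step 1). Comparing dimensions gives $\fD(\gzero) = \gzero \cap \ker(p)$, which is the meaning of $\fD(\gzero) = \ker(p)$ for $p$ restricted to $\gzero$.

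The main obstacle is Step 1, specifically justifying that $\gzero$-stability coincides with $\CGzero$-stability. This is exactly where \ref{item:gzero-generates-CGzero} is essential. Everything else is formal.
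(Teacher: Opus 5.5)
Your proof is correct, and it is essentially the argument the paper defers to in \cite[Proposition 2.3.5]{DK:2026b}. Assumption \ref{item:gzero-generates-CGzero} makes the $\gzero$-stable subspaces of a $\CGzero$-module coincide with its $\CGzero$-submodules, so Maschke's theorem gives a faithful completely reducible $\gzero$-module and hence reductivity; then $p$ kills commutators and is the identity on $Z(\gzero) = \gzero \cap Z(\CGzero)$, which forces $\fD(\gzero) = \ker(p)$. As a minor simplification, Step 3 does not need the inclusion $p(\gzero) \subseteq \gzero$ from Lemma \ref{lemma:lie-algebra-center}: writing $x = z + d$ with $z \in Z(\gzero) \subseteq Z(\CGzero)$ and $d \in \fD(\gzero)$ gives $p(x) = z$ directly.
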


\begin{lemma}[{\cite[Lemmas 2.3.4 and 2.3.6]{DK:2026b}}] \label{lemma:simple-G0-g0-D(g0)}
Suppose \ref{item:S-union-conj-classes} and \ref{item:gzero-generates-CGzero} hold. Let $V$ be a $\CGzero$-module. The following are equivalent:
	\begin{enumerate}
	\item $V$ is simple as a $\CGzero$-module.
	\item $V$ is simple as a $\gzero$-module.
	\item $V$ is simple as a $\fD(\gzero)$-module.
	\end{enumerate}
\end{lemma}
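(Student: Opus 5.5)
The plan is to prove the cycle of implications (1) $\Rightarrow$ (2) $\Rightarrow$ (3) $\Rightarrow$ (1). Throughout, $V$ is regarded as a module over $\gzero$ and $\fD(\gzero)$ by restricting the associative action of $\CGzero$ along the inclusions $\fD(\gzero) \subseteq \gzero \subseteq \Lie(\CGzero)$. This is legitimate because the bracket on $\gzero$ is the ordinary commutator in $\CGzero$: $\gzero$ consists of even elements, so the super sign is trivial. The basic observation is that any $\CGzero$-submodule of $V$ is automatically a $\gzero$-submodule and a $\fD(\gzero)$-submodule.

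\textbf{Step 1: (1) $\Rightarrow$ (2).} Let $V$ be a simple $\CGzero$-module and let $0 \neq U \subseteq V$ be a $\gzero$-submodule. By \ref{item:gzero-generates-CGzero}, there is a subset $S' \subseteq \gzero$ that generates $\CGzero$ as an associative algebra. Since $U$ is stable under each element of $S'$, it is stable under all products and linear combinations of these elements, hence under all of $\CGzero$. Thus $U$ is a $\CGzero$-submodule, and simplicity of $V$ forces $U = V$.

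\textbf{Step 2: (2) $\Rightarrow$ (3).} Suppose $V$ is simple as a $\gzero$-module. By the basic observation, every $\CGzero$-submodule is a $\gzero$-submodule, so $V$ is also simple as a $\CGzero$-module. By Proposition \ref{prop:g0-reductive}, $\gzero = Z(\gzero) \oplus \fD(\gzero)$. By Lemma \ref{lemma:lie-algebra-center}, $Z(\gzero) \subseteq Z(\CGzero)$. Since $V$ is a finite-dimensional simple $\CGzero$-module, Schur's lemma shows that each element of $Z(\CGzero)$, and in particular each element of $Z(\gzero)$, acts on $V$ by a scalar. Consequently every $\fD(\gzero)$-submodule of $V$ is stable under $Z(\gzero) + \fD(\gzero) = \gzero$. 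Simplicity over $\gzero$ then gives simplicity over $\fD(\gzero)$.

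\textbf{Step 3: (3) $\Rightarrow$ (1).} Any $\CGzero$-submodule of $V$ is in particular a $\fD(\gzero)$-submodule. So if $V$ is simple over $\fD(\gzero)$, it is simple over $\CGzero$.

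\textbf{Main obstacle.} The only substantive point is the passage from $\gzero$ to $\fD(\gzero)$ in Step 2. It requires two inputs: the reductive splitting $\gzero = Z(\gzero) \oplus \fD(\gzero)$, and the fact that $Z(\gzero)$ acts by scalars. The latter is supplied by $Z(\gzero) \subseteq Z(\CGzero)$ from Lemma \ref{lemma:lie-algebra-center}, combined with Schur's lemma, applied after first upgrading $\gzero$-simplicity to $\CGzero$-simplicity. Everything else reduces to the fact that $S' \subseteq \gzero$ generates $\CGzero$ as an algebra.
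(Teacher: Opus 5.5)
Your proof is correct and follows essentially the same route as the cited results from \cite{DK:2026b}: the equivalence of (1) and (2) comes from $S' \subseteq \gzero$ generating $\CGzero$ as an algebra, and the passage to $\fD(\gzero)$ uses the splitting $\gzero = Z(\gzero) \oplus \fD(\gzero)$ of Proposition \ref{prop:g0-reductive}, with $Z(\gzero)$ acting by scalars on simple modules. A small simplification is available in Step 2: Schur's lemma applies directly to the finite-dimensional simple $\gzero$-module $V$, so the detour through $Z(\gzero) \subseteq Z(\CGzero)$ is not needed, though it is equally valid.
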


\begin{proposition}[{\cite[Proposition 2.3.7]{DK:2026b}}] \label{prop:simples-equivalent-G0-g0}
Suppose \ref{item:S-union-conj-classes}, \ref{item:gzero-generates-CGzero}, and \ref{item:S'-one-class-gen-G0} hold. Let $V_1$ and $V_2$ be simple $\CGzero$-modules of dimensions greater than $1$. Then the following are equivalent:
	\begin{enumerate}
	\item \label{item:Gzero-iso} $V_1$ and $V_2$ are isomorphic as $\CGzero$-modules.
	\item \label{item:gzero-iso} $V_1$ and $V_2$ are isomorphic as modules over the Lie subalgebra $\gzero \subseteq \CGzero$.
	\item \label{item:D(gzero)-iso} $V_1$ and $V_2$ are isomorphic as modules over the Lie subalgebra $\fD(\gzero) \subseteq \CGzero$.
	\end{enumerate}
\end{proposition}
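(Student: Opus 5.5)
The plan is to mimic the non-super argument of Proposition \ref{prop:iso-equivalent}, but now for the ordinary Lie algebra $\gzero \subseteq \Lie(\CGzero)$. The implications \eqref{item:Gzero-iso}$\Rightarrow$\eqref{item:gzero-iso}$\Rightarrow$\eqref{item:D(gzero)-iso} are immediate, because $\fD(\gzero) \subseteq \gzero \subseteq \CGzero$ and any $\CGzero$-isomorphism intertwines every element of $\CGzero$. The content is \eqref{item:D(gzero)-iso}$\Rightarrow$\eqref{item:Gzero-iso}.

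So suppose $f: V_1 \to V_2$ is a $\fD(\gzero)$-module isomorphism. By Lemma \ref{lemma:simple-G0-g0-D(g0)}, both $V_i$ are simple over $\fD(\gzero)$. By Proposition \ref{prop:g0-reductive}, $\gzero = Z(\gzero) \oplus \fD(\gzero)$, and by Lemma \ref{lemma:lie-algebra-center}, $Z(\gzero) \subseteq Z(\CGzero)$. Hence each $z \in Z(\gzero)$ acts on $V_i$ by a scalar $\chi_i(z)$, and therefore each $x \in S'$ acts as
\[
\rho_{V_i}(x) = \chi_i(p(x)) + \rho_{V_i}(x - p(x)), \qquad x - p(x) \in \fD(\gzero).
\]
It follows that $f \circ \rho_{V_1}(x) \circ f^{-1} = \rho_{V_2}(x) + c_x \cdot \id$, where $c_x = \chi_2(p(x)) - \chi_1(p(x))$.

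Next I use \ref{item:S'-one-class-gen-G0}. Since $S'$ is a single conjugacy class, $p(x)$ is the same central element $p_0$ for every $x \in S'$, so $c := c_x$ is a constant independent of $x$. Each $x \in S'$ has order two, so $\rho_{V_1}(x)$ has eigenvalues in $\{\pm 1\}$, and $\rho_{V_2}(x) + c$ is conjugate to it. Its eigenvalues therefore lie in $\{\pm1\} \cap \{\pm1 + c\}$, which forces $c \in \{0, \pm 2\}$.

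The case $c = \pm 2$ is the main obstacle. If $c = 2$, then $\rho_{V_2}(x) + 2$ has eigenvalues in $\{\pm 1\}$ while $\rho_{V_2}(x)$ has eigenvalues in $\{\pm 1\}$; this forces every eigenvalue of $\rho_{V_2}(x)$ to equal $-1$. So $\rho_{V_2}(x) = -\id$ for all $x \in S'$, since $x$ is diagonalizable. The case $c = -2$ is symmetric and gives $\rho_{V_1}(x) = -\id$. Because $S'$ generates $\Gzero$ by \ref{item:gzero-generates-CGzero}, this makes $\Gzero$ act by scalars on $V_2$ (respectively $V_1$), so that module is one-dimensional, contradicting the hypothesis $\dim > 1$.

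Hence $c = 0$, and $f$ intertwines $\rho_{V_1}(x)$ and $\rho_{V_2}(x)$ for every $x \in S'$. Since $S'$ generates $\CGzero$ as an algebra, $f$ is a $\CGzero$-isomorphism, which proves \eqref{item:Gzero-iso}.
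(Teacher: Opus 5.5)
Your proof is correct and is essentially the Marin-style argument the paper relies on (it cites \cite[Proposition 2.3.7]{DK:2026b}, and for the non-super analogue points to \cite[Proposition 2]{Marin:2007}). You split $x \in S'$ as $p(x) + (x - p(x))$ with $x - p(x) \in \fD(\gzero)$, use \ref{item:S'-one-class-gen-G0} to make the scalar shift $c$ independent of $x$, and use $x^2 = 1$ to force $c = 0$ unless $\Gzero$ acts by scalars; your eigenvalue count is equivalent to expanding $(\rho_{V_2}(x)+c)^2 = 1$ to get $c\,(2\rho_{V_2}(x) + c) = 0$, and the only slip is a harmless sign, since with your conventions $c_x = \chi_1(p(x)) - \chi_2(p(x))$.
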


\begin{lemma}[{\cite[Lemma 2.5.2]{DK:2026b}}] \label{lemma:D(gzero)}
Suppose \ref{item:S-union-conj-classes}--\ref{item:D(gzero)-to-sl(V)} hold. Then $\fD(\gzero) = \fD(\fD(\CG)_{\zero})$, and the sum of the structure maps of the simple $\CGzero$-modules induces an isomorphism
	\begin{equation} \label{eq:D(D(CG)0)-AW-iso}
	\fD(\gzero) \cong \bigoplus_{V \in \Irr(\Gzero)} \fsl(V).
	\end{equation}
\end{lemma}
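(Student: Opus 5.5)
The plan is to sandwich $\fD(\gzero)$ between two subalgebras of $\fD(\fD(\CG)_{\zero})$: first show $\fD(\gzero) \subseteq \fD(\fD(\CG)_{\zero})$, then show that the image of $\fD(\gzero)$ under the Artin--Wedderburn map for $\CGzero$ is already all of $\bigoplus_{V \in \Irr(\Gzero)} \fsl(V)$.

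\emph{Step 1 (upper bound).} By Lemma \ref{lemma:g-in-D(g)+span}, $\g \subseteq \fD(\CG) + (\CSrm)^G$. Since $(\CSrm)^G$ is concentrated in odd degree, $\gzero \subseteq \fD(\CG)_{\zero}$, and hence $\fD(\gzero) \subseteq \fD(\fD(\CG)_{\zero})$.

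Next we compute $\fD(\fD(\CG)_{\zero})$ using \eqref{eq:D(LieCG)} together with Proposition \ref{prop:simple-supermodules}.
\begin{itemize}
\item For a Type Q summand, $\sq(W)_{\zero} \cong \gl(W_{\zero})$ with $W_{\zero} \cong S^\lambda$, since the trace condition only involves the odd block $B$. Its derived algebra is $\fsl(S^\lambda)$.
\item For a Type M summand, $\fsl(W)_{\zero} = \{(A,D) : \tr A = \tr D\} \subseteq \gl(S^{\lambda^+}) \oplus \gl(S^{\lambda^-})$. Its derived algebra is $\fsl(S^{\lambda^+}) \oplus \fsl(S^{\lambda^-})$.
\end{itemize}
By the indexing \eqref{eq:G0-Irr-parameters}, the ordinary Artin--Wedderburn map for $\CGzero$ therefore identifies $\fD(\fD(\CG)_{\zero})$ with $\bigoplus_{V \in \Irr(\Gzero)} \fsl(V)$. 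So it suffices to show that the restricted map
\[
\Phi = \bigoplus_{V} \rho_V : \fD(\gzero) \to \bigoplus_{V \in \Irr(\Gzero)} \fsl(V)
\]
is surjective. It is injective because it is a restriction of the Artin--Wedderburn isomorphism.

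\emph{Step 2 (surjectivity via a Goursat-type argument).} By Proposition \ref{prop:g0-reductive}, $\fD(\gzero)$ is semisimple. By \ref{item:D(gzero)-to-sl(V)}, each projection $\rho_V \circ \Phi$ is onto $\fsl(V)$. Summands with $\dim V = 1$ are zero and can be discarded. The standard argument for semisimple subalgebras of a product of simple algebras that project onto each factor then says the image is the full product, unless two distinct factors are linked. Linked means there is an isomorphism $\theta : \fsl(V_1) \to \fsl(V_2)$ with $\rho_{V_2} = \theta \circ \rho_{V_1}$ on $\fD(\gzero)$. By the description of automorphisms of $\fsl$ recalled in Lemma \ref{lemma:distinct-simple-ideals}, this forces one of two cases:
\begin{itemize}
\item $V_2 \cong V_1$ as $\fD(\gzero)$-modules, which is excluded by Proposition \ref{prop:simples-equivalent-G0-g0} (using \ref{item:S'-one-class-gen-G0});
\item $V_2 \cong V_1^{*,\Lie}$ as $\fD(\gzero)$-modules.
\end{itemize}

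\emph{Step 3 (main obstacle: ruling out dual linkage).} Here I would use the inductive subgroup of \ref{item:inductive-subgroup}. Since $S_H \subseteq S$, we have $\fh \subseteq \g$ and so $\fD(\hzero) \subseteq \fD(\gzero)$. Restricting the hypothetical isomorphism gives $\Res(V_2) \cong \Res(V_1)^{*,\Lie}$ as $\fD(\hzero)$-modules.

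By \eqref{eq:D(hzero)}, every $\CHzero$-composition factor $W$ of a $\CGzero$-module is, as a $\fD(\hzero)$-module, the natural module of its own factor $\fsl(W)$ and trivial on all other factors. By \ref{item:inductive-subgroup}(d), $\Res^{\Gzero}_{\Hzero}(V_1)$ contains such a $W$ with $\dim W > 1$, and by \ref{item:inductive-subgroup}(c), in fact $\dim W \geq 3$. Then $W^{*,\Lie}$ is a composition factor of $\Res(V_2)$. However, $W^{*,\Lie}$ is the dual of the natural module of $\fsl(W)$ and is trivial on the other factors. For $\dim W \geq 3$ this is not isomorphic to the natural module of any factor, nor to a trivial module, so it is not a $\CHzero$-composition factor at all. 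This is a contradiction, so no two factors are linked.

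Hence $\Phi$ is an isomorphism, and combined with Step 1 this gives $\fD(\gzero) = \fD(\fD(\CG)_{\zero})$ together with \eqref{eq:D(D(CG)0)-AW-iso}. I expect the delicate point to be this dual-pair exclusion, in particular the bookkeeping that composition factors of restrictions are exactly natural modules of single factors. That bookkeeping is where \ref{item:inductive-subgroup}(b), (c) and (d) are all needed.
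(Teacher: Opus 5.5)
Your argument is correct and follows the same route as the proof in \cite{DK:2026b} that the paper cites (the paper itself only quotes the result, and its Lemma~\ref{lemma:distinct-simple-ideals} is the non-super analogue of the same argument). That route is: the super Artin--Wedderburn decomposition identifies $\fD(\fD(\CG)_{\zero})$ with $\bigoplus_{V \in \Irr(\Gzero)} \fsl(V)$, and Lemma~\ref{lemma:g-in-D(g)+span} gives $\fD(\gzero) \subseteq \fD(\fD(\CG)_{\zero})$; surjectivity then follows from \ref{item:D(gzero)-to-sl(V)} together with a distinct-simple-ideals argument, ruling out the inner case by Proposition~\ref{prop:simples-equivalent-G0-g0} and the transpose case by restricting to $\fD(\hzero)$ and using \ref{item:inductive-subgroup}(b)--(d), where (c) and (d) supply a constituent $W$ with $\dim W \geq 3$ whose natural $\fsl(W)$-module is not self-dual.
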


\begin{lemma}[{\cite[Lemma 2.5.3]{DK:2026b}}] \label{lemma:W(gn)}
Suppose \ref{item:S-union-conj-classes}--\ref{item:no-G-simple-dim-2} hold. Then for each $W \in \Irr_s(G)$, the image of $\g$ under the supermodule structure map $\rho_W: \CG \to \End(W)$ is
	\begin{equation} \label{eq:Wlambda(gn)}
	\rho_W(\g) = \begin{cases} 
	\sq(W) + \rho_W((\CSrm)^G) & \text{if $W$ is of Type Q,} \\
	\fsl(W) & \text{if $W$ is of Type M.}
	\end{cases}
	\end{equation}
\end{lemma}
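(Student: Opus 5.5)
The plan is to compute $\rho_W(\g)$ separately in its even and odd parts, then assemble the pieces using the structure of $\fsl(W)$ and $\sq(W)$. Fix $W \in \Irr_s(G)$. Lemma \ref{lemma:g-in-D(g)+span} gives $\g \subseteq \fD(\CG) + (\CSrm)^G$, so by \eqref{eq:D(LieCG)},
\[
\rho_W(\g) \subseteq \sq(W) + \rho_W((\CSrm)^G) \quad \text{(Type Q)}, \qquad \rho_W(\g) \subseteq \fsl(W) + \rho_W((\CSrm)^G) \quad \text{(Type M)}.
\]
In the Type M case, $\rho_W$ of an odd element such as a class sum is an odd endomorphism, and odd endomorphisms lie in $\fsl(W)$ because they have zero supertrace. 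So the upper bound is $\fsl(W)$ in that case. It therefore suffices to prove the reverse inclusions.

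\textbf{Even part.} By Lemma \ref{lemma:D(gzero)}, $\fD(\gzero)$ maps onto $\bigoplus_{V \in \Irr(\Gzero)} \fsl(V)$. For $W$ of Type M we have $W_{\zero} = S^{\lambda^+}$ and $W_{\one} = S^{\lambda^-}$, which are non-isomorphic simple $\CGzero$-modules. Hence $\rho_W(\fD(\gzero)) = \fsl(W_{\zero}) \oplus \fsl(W_{\one})$.

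For $W$ of Type Q we have $W_{\zero} \cong W_{\one} \cong S^\lambda$ as $\CGzero$-modules. The image $\rho_W(\fD(\gzero))$ is then the diagonal copy of $\fsl(S^\lambda)$, which is exactly the even part of $\sq(W)$ modulo the identity. The central part $\C\cdot \id$ of $\fsl(W)_{\zero}$ (Type M) can be recovered later from brackets of odd elements.

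\textbf{Odd part.} The odd part $\gone$ contains $\CSrm$ and is stable under $\ad(\gzero)$. So $\rho_W(\gone)$ is a $\fD(\gzero)$-submodule of $\fsl(W)_{\one} = \Hom(W_{\zero},W_{\one}) \oplus \Hom(W_{\one},W_{\zero})$ (Type M), or of $\fq(W)_{\one} \cong \End(S^\lambda)$ (Type Q).
\begin{enumerate}
\item In Type M, the two summands $\Hom(W_{\zero},W_{\one})$ and $\Hom(W_{\one},W_{\zero})$ are simple and non-isomorphic over $\fsl(W_{\zero}) \oplus \fsl(W_{\one})$. This requires $\dim W_{\zero}, \dim W_{\one} > 1$, which follows from \ref{item:no-G-simple-dim-2} together with \ref{item:inductive-subgroup} (there are no $1$-dimensional pieces except in trivial cases, which are handled separately).
\item Since a reflection $s$ swaps $W_{\zero}$ and $W_{\one}$, $\rho_W(s)$ has nonzero components in both summands. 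The submodule generated by $\rho_W(s)$ is therefore all of $\fsl(W)_{\one}$.
\item In Type Q, $\End(S^\lambda) = \fsl(S^\lambda) \oplus \C J$ as a module over the diagonal $\fsl(S^\lambda)$. Since $\rho_W(s)$ is not a multiple of $J$, the odd image contains the $\fsl(S^\lambda)$ piece, which is $\sq(W)_{\one}$, and $\rho_W((\CSrm)^G)$ supplies the rest.
\end{enumerate}

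\textbf{Assembly.} Finally, Lemma \ref{lem:generated-by-odd} shows that these odd parts generate all of $\fsl(W)$ (for $\fsl(m|m)$), respectively all of $\sq(W)$. In the Type M case with $m \neq n$, I would instead bracket $\Hom(W_{\zero},W_{\one})$ with $\Hom(W_{\one},W_{\zero})$ directly; this produces the missing central element.

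The main obstacle is the odd-part step. One must certify that $\rho_W(s)$ has nonzero projection onto the relevant simple summands, and must also handle small dimensions where irreducibility fails. This is exactly where hypothesis \ref{item:no-G-simple-dim-2} is needed, ruling out the $\fsl(1|1)$ and $\fq(1)$ degeneracies.
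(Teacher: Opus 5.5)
Your outline is sound, and it uses the tools the paper assembles in the way the paper uses them. The upper bound comes from Lemma \ref{lemma:g-in-D(g)+span}, the even part from Lemma \ref{lemma:D(gzero)}, the odd part is treated as a $\fD(\gzero)$-submodule of a sum of pairwise non-isomorphic simple modules, and then Lemma \ref{lem:generated-by-odd} finishes. The proof of Theorem \ref{thm:D(E6)-LSA} runs the same pattern. However, two of your small-dimension justifications are wrong as stated and need repair.

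\textbf{The $\fq(1)$ case.} Hypothesis \ref{item:no-G-simple-dim-2} does not exclude this case. Type Q supermodules with $\dim W_{\zero}=1$ always exist, for example $\bbW=\C_{\triv}\oplus\ve$. There every reflection acts by $\pm J$. So your step ``$\rho_W(s)$ is not a multiple of $J$'' fails, and Lemma \ref{lem:generated-by-odd} does not apply. The case is nonetheless trivial: $\sq(W)=\C\cdot\id_W$, and $\id_W=\tfrac12\rho_W([s,s])\in\rho_W(\g)$. When $\dim W_{\zero}\geq 2$, you should also say why some reflection acts outside $\C J$. The reason is that otherwise $\rho_W(\CG)$ would lie in the commutative algebra $\C\,\id_W+\C J$, which is impossible since $\rho_W(\CG)=Q(W)$.

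\textbf{Non-isomorphism of the odd summands in Type M.} The summands $\Hom(W_{\zero},W_{\one})\cong W_{\zero}^*\boxtimes W_{\one}$ and $\Hom(W_{\one},W_{\zero})\cong W_{\zero}\boxtimes W_{\one}^*$ are non-isomorphic only when $\dim W_{\zero}\geq 3$, not merely when $\dim W_{\zero}>1$. If $\dim W_{\zero}=2$, the natural $\fsl(2)$-module is self-dual, so the two summands are isomorphic. The submodule generated by $\rho_W(s)$ could then be a diagonal (graph) copy rather than everything. Hypothesis \ref{item:no-G-simple-dim-2} only rules out $\dim W_{\zero}=1$, i.e.\ $\dim S^\lambda=2$. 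What rules out $\dim W_{\zero}=2$ is \ref{item:inductive-subgroup}(c),(d): together they force every simple $\CGzero$-module of dimension $>1$ to have dimension at least $3$.

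\textbf{A minor point.} Since $S^{\lambda^+}$ and $S^{\lambda^-}$ are conjugate, Type M always has $\dim W_{\zero}=\dim W_{\one}$. So your separate case $m\neq n$ never occurs, and Lemma \ref{lem:generated-by-odd} (or your direct bracket of the two Hom-spaces) covers Type M.
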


\begin{remark} \label{rem:reverse-deductions}
In general, \ref{item:gzero-generates-CGzero} need not hold (e.g., see \cite[Example 4.1.4]{DK:2026b}), but the conclusions in Lemma \ref{lemma:lie-algebra-center} through Lemma \ref{lemma:W(gn)} can still be deduced retroactively if \eqref{eq:LSA-goal} is already known. By the main results of \cite{DK:2025,DK:2026b} and by \cite[\S2.4]{DK:2026b}, \eqref{eq:LSA-goal} is known to hold if $G$ is an irreducible finite reflection group of type $A_n$ ($n \geq 1$), $B_n$ ($n \geq 2$), $D_n$ ($n \geq 4$), or $I_2(m)$ ($m \geq 3$).

\end{remark}

The next result states that \ref{item:gzero-generates-CGzero} and \ref{item:S'-one-class-gen-G0} are satisfied if $G$ is irreducible of large enough rank.

\begin{lemma} \label{lemma:S'-generator}
Let $(G,S_0)$ be an irreducible Coxeter system of type $A_n$ ($n \geq 4$), $B_n$ ($n \geq 4$), $D_n$ ($n \geq 5$), $F_4$, $H_4$, or $E_n$ ($n=6,7,8$). Then there exists a conjugacy class $S'$ in $\Gzero$ that satisfies the assumptions \ref{item:gzero-generates-CGzero} and \ref{item:S'-one-class-gen-G0}.
\end{lemma}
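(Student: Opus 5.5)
The plan is to take $S'$ to be a conjugacy class of products $st$ of two \emph{commuting} reflections $s \neq t$. First, such elements lie in $\gzero$: since $s$ and $t$ are odd and $st = ts$, the super bracket gives
\[
[s,t] = st + ts = 2st \in \g_{\zero},
\]
and $st$ has order two. Next, the $G$-conjugacy class of $st$ is a single $\Gzero$-conjugacy class. The odd element $s$ centralizes $st$, because $s(st)s = ts = st$. Hence the $G$-centralizer of $st$ is not contained in $\Gzero$, so the $G$-class does not split on restriction to $\Gzero$. So if $S'$ is the $G$-class of $st$, then $S' \subseteq \gzero$ and $S'$ is a single $\Gzero$-class of involutions, which is \ref{item:S'-one-class-gen-G0}. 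It remains to pick the pair $(s,t)$ in each type so that $\subgrp{S'} = \Gzero$, which gives \ref{item:gzero-generates-CGzero}.

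For generation, let $K = \subgrp{S'}$. Since $S'$ is a union of $G$-conjugacy classes, $K \unlhd G$, and $K \subseteq \Gzero$. In $G/K$ we have $\bar s = \bar t^{-1} = \bar t$ whenever $st \in S'$. Form the graph $\Gamma$ whose vertices are the reflections of $G$, with an edge joining $u$ and $v$ whenever $uv \in S'$. If $\Gamma$ is connected, then all reflections have the same image $\bar r$ in $G/K$. Since $G$ is generated by reflections, $G/K = \subgrp{\bar r}$ has order at most $2$, so $K$ has index at most $2$ in $G$. As $K \subseteq \Gzero$, this forces $K = \Gzero$.

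The choice of pair is made type by type, aiming for $\Gamma$ connected. In types $A_n$, $D_n$, $E_n$ and $H_4$ there is one class of reflections. Take $s,t$ to be reflections in two orthogonal roots; in $A_n$ these are disjoint transpositions, which needs $n \geq 3$. Then $\Gamma$ is the orthogonality graph on (one representative of each $\pm$ pair of) roots, restricted to the orthogonal pairs in the chosen $W$-orbit. In the simply laced types, and in $H_4$, one expects $W$ to act transitively on ordered pairs of orthogonal roots, at least in the stated ranks; where it does not (for instance $D_n$), it suffices to pick one orbit and check connectivity for it. In types $B_n$ and $F_4$ there are two reflection classes. Here a long--long or short--short pair would only identify reflections within one class, leaving $G/K$ possibly a Klein four-group. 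So I would take a \emph{mixed} pair: a long reflection commuting with a short reflection, e.g.\ $s_{x_1 - x_2}$ and $s_{x_3}$ in $B_n$. Then $\Gamma$ is bipartite between long and short reflections, and it is connected once the rank is large enough (one checks $n \geq 4$ for $B_n$).

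The main obstacle is the case-by-case combinatorics: checking that the chosen orthogonal pairs form a single $G$-orbit, or at least choosing one orbit, and that the resulting graph $\Gamma$ is connected. The rank hypotheses in the lemma should be exactly what makes $\Gamma$ connected, or keeps $K$ from being a proper subgroup. For the classical families I would argue by hand using explicit roots $x_i \pm x_j$ and $x_i$. For $F_4$, $H_4$ and $E_6$, $E_7$, $E_8$ I would verify directly in GAP \cite{GAP4} that $\subgrp{S'} = \Gzero$. This can be done by computing the order of the subgroup generated by the class, or, cheaper, by checking connectivity of $\Gamma$ on the root system.
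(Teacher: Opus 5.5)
Your proposal is correct and is essentially the paper's proof. The paper takes $S'$ to be the $G$-class of $s_1s_n$, where $s_1,s_n$ are the two end nodes of the Coxeter graph; they commute, and in types $B_n$ and $F_4$ they are automatically a mixed long/short pair. It gets $S'\subseteq\gzero$ and non-splitting in $\Gzero$ exactly as you do. It then proves $\subgrp{S'}=\Gzero$ by showing that each generator $s_1s_i$ of $\Gzero$ lies in $\subgrp{S'}$, which is your criterion that reflections become identified modulo $\subgrp{S'}$, checked only on simple reflections.

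The difference is only in execution. Working with simple reflections and conjugations along odd-labeled edges lets the paper settle every type by hand, including $F_4$, $H_4$ and $E_n$, without GAP. The choice $s_1s_n$ also automatically lands in the right orbit for $D_n$. By contrast, the orbit of $\smallset{s_{e_1-e_2},s_{e_1+e_2}}$ would give a perfect-matching graph $\Gamma$, and $\subgrp{S'}$ would then be only the group of even sign changes.
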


\begin{proof}
Let $S_0 = \smallset{s_1,s_2,\ldots,s_n}$, and suppose the corresponding Coxeter graph is labeled as in \cite[Theorem 1.3.3]{Geck:2000}. Let $S'$ be the conjugacy class in $G$ of the element $s_1 s_n$. Then $(s_1s_n)^2 = 1$, so all elements of $S'$ are of order $2$. Further, $s_1 s_n = \frac{1}{2} \cdot (s_1 s_n + s_n s_1) = \frac{1}{2} \cdot [s_1,s_n] \in \gzero$. Since $S$ is a union of conjugacy classes in $G$, and since the operation of conjugation by an element $g \in G$ is an even linear map on $\Lie(\CG)$, it follows first that $\g$ and then that $\gzero$ is closed under conjugation by $G$. Then $S' \subseteq \gzero$. Commutativity of $s_1$ and $s_n$ also implies that $s_1 (s_1s_n) = (s_1s_n) s_1$, so $C_G(s_1s_n) \not\subseteq \Gzero$. Hence anything conjugate to $s_1s_n$ by an element of $G$ is also conjugate by an element of $\Gzero$ and  $S'$ remains a single conjugacy class in $\Gzero$.

It remains to show that $\Gzero = \subgrp{S'}$. Since $S'$ is a conjugacy class in $G$, we have that $\subgrp{S'}$ is normal in $G$. It is an exercise to show that $\Gzero = \ker(\ve) = \subgrp{S''}$, where $S'' = \smallset{ s_1s_i : 2 \leq i \leq n}$, so we will show that $S'' \subseteq \subgrp{S'}$. Write $g \sim g'$ if $g$ and $g'$ are conjugate in $G$. For arbitrary $s_i,s_j \in S_0$, observe that if $m_{i,j}$ is odd, with say $m_{i,j} = 2k+1$, then the identity $(s_is_j)^{m_{i,j}} = 1$ implies that
	\[
	s_j = (s_is_j)^k s_i (s_js_i)^k = (s_is_j)^k \cdot s_i \cdot (s_is_j)^{-k},
	\]
i.e., $s_i \sim s_j$. Further, if $s_\ell$ commutes with each of $s_i$ and $s_j$, then $s_j s_\ell = (s_is_j)^k (s_is_\ell) (s_is_j)^{-k}$, so $s_i s_\ell \sim s_j s_\ell$. The details of the argument now vary slightly depending on the Coxeter type of $G$:
	\begin{enumerate}
	\item Type $A_n$ ($n \geq 4$): One has $s_1 s_n \sim s_1s_{n-1} \sim s_1 s_{n-2} \sim \cdots \sim s_1 s_3$, and also $s_1 s_n \sim s_2 s_n$. Since $s_1s_n \in S'$ and $s_2 s_n \in S'$, we have $s_1 s_2 = (s_1s_n)(s_2s_n)^{-1} \in \subgrp{S'}$. Thus $S'' \subseteq \subgrp{S'}$.

	\item Type $B_n$ ($n \geq 4$): One has $s_1 s_n \sim s_1s_{n-1} \sim \cdots \sim s_1 s_3$. Since $s_1 s_3 \in S'$ and $s_1s_4 \in S'$, we have $s_3s_4 = (s_1s_3)^{-1} (s_1s_4) \in \subgrp{S'}$. Now $s_3s_4 \sim s_2s_4$, so $s_2 s_4 \in \subgrp{S'}$. Since $s_1 s_4 \in S'$ and $s_2 s_4 \in \subgrp{S'}$, we have $s_1s_2 = (s_1s_4)(s_2s_4)^{-1} \in \subgrp{S'}$. Thus $S'' \subseteq \subgrp{S'}$.

	\item Type $D_n$ ($n \geq 5$). One has $s_1s_n \sim s_1s_{n-1} \sim \cdots \sim s_1 s_4$, and also $s_1s_n \sim s_3 s_n \sim s_2 s_n$. Since $s_1 s_n \in S'$ and $s_2 s_n \in S'$, we have $s_1 s_2 = (s_1s_n)(s_2s_n)^{-1} \in \subgrp{S'}$. Similarly, since $s_1 s_n \in S'$ and $s_3 s_n \in S'$, we have $s_1 s_3 \in \subgrp{S'}$. Thus $S'' \subseteq \subgrp{S'}$.
	
	\item Types $F_4$ and $H_4$: One has $s_1 s_4 \sim s_1 s_3$ and $s_1 s_4 \sim s_2 s_4$. Since $s_1 s_4 \in S'$ and $s_2 s_4 \in S'$, we have $s_1 s_2 = (s_1s_4) (s_2s_4)^{-1} \in \subgrp{S'}$. Thus $S'' \subseteq \subgrp{S'}$.
	
	\item Type $E_n$ ($n=6,7,8$): One has $s_1 s_n \sim s_1 s_{n-1} \sim \cdots \sim s_1 s_4$, and $s_1 s_4 \sim s_1 s_2 \sim s_3 s_2 = s_2 s_3$. Since $s_1 s_2 \in S'$ and $s_3 s_2 \in S'$, we have $s_1 s_3 = (s_1 s_2)(s_2 s_3) \in \subgrp{S'}$. Then $S'' \subseteq \subgrp{S'}$. \qedhere
	\end{enumerate}
\end{proof}

\section{Lie superalgebra calculations for groups of exceptional type} \label{S:Lie-superalgebra-calcs}

\subsection{Type \texorpdfstring{$I_2(m)$ for $m \geq 3$}{I2(m) for m >=3}}

Recall the notation of Section \ref{subsec:I2m-LA}. The following result is stated and proved in \cite[\S2.4]{DK:2026b}, but we include it here as well for the sake of completeness. The proof is by dimension comparison, as for Theorem \ref{thm:D(s(Dm)-LA}.

\begin{theorem} \label{thm:D(Dih)-LSA}
For the dihedral group $G = D_m$ of order $2m$ ($m \geq 3$), the Lie superalgebra inclusion  \eqref{eq:LSA-inclusion} is an equality, i.e., $\g = \fD(\CD_m) + (\CSrm)^G$.
\end{theorem}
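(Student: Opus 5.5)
The argument is a dimension count. By Lemma \ref{lemma:g-in-D(g)+span} we have $\g \subseteq \fD(\CD_m) + (\CSrm)^G$, so it suffices to show that $\dim(\g)$ is at least the dimension of the right-hand side. First compute that dimension. The sum is direct, because $\fD(\CG) = \ker$ of the supertrace-type functionals on each Artin--Wedderburn summand. Concretely, $(\CSrm)^G$ is spanned by odd central class sums, and by \eqref{eq:LieCG} each acts on a Type Q summand $W$ as a multiple of $J^W$, which has nonzero odd trace and so lies outside $\sq(W)$. We then use $\dim \fD(\CG) = 2m - \abs{\Irr_s(G)}$. The sign representation swaps the one-dimensional modules in pairs, while each two-dimensional module $V$ satisfies $V \cong V^\ve$. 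Hence $\abs{\Irr_s(G)} = 2 + (k-1) = k+1$ for $m = 2k$ (and $\dim(\CSrm)^G = 2$), and $\abs{\Irr_s(G)} = 1 + k$ for $m = 2k+1$ (and $\dim(\CSrm)^G = 1$). The target dimension is therefore $4k - (k+1) + 2 = 3k+1$ when $m = 2k$, and $4k+2 - (k+1) + 1 = 3k+2$ when $m = 2k+1$. (I would double-check the directness claim for the summands where $J^W$ could act on a pair of one-dimensional modules. If it fails, the target simply decreases, which only makes the lower bound easier to reach.)

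For the lower bound, exhibit explicit elements of $\g$. The odd part contains the $m$ reflections $sr^i$. For even elements, take the super brackets of reflections, which are anticommutators: $[s, sr^i] = s\cdot sr^i + sr^i \cdot s = r^i + r^{-i}$. Letting $i$ range over $0 \le i \le \lfloor m/2 \rfloor$ gives $1$ (from $i = 0$, up to the factor $2$) together with the distinct elements $r^i + r^{m-i}$; when $m$ is even, $i = k$ gives $2r^k$. These are linearly independent. Next, brackets of the even elements $r^i + r^{-i}$ with reflections give odd elements $sr^j(r^i + r^{-i}) \pm \cdots$, which lie in the span of reflections and so add nothing new. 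To get additional even elements, one would need further odd elements, but all odd elements of $\CG$ are spanned by reflections. Thus $\g_{\one} = \CSrm$, of dimension $m$, and $\g_{\zero} \supseteq \Span\{r^i + r^{-i}\}$, of dimension $\lfloor m/2 \rfloor + 1$.

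The main obstacle is whether this span reaches the target. Further even elements could come from brackets $[r^i + r^{-i}, r^j + r^{-j}]$, but these are zero since the rotation subgroup is commutative. So $\dim(\g) = m + \lfloor m/2 \rfloor + 1$. This equals $3k+1$ for $m = 2k$ and $3k+2$ for $m = 2k+1$, which matches the targets computed above. By the inclusion of Lemma \ref{lemma:g-in-D(g)+span}, equality follows.
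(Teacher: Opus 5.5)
Your proof is correct and takes essentially the same approach as the paper: a dimension comparison, mirroring the proof of Theorem~\ref{thm:D(s(Dm)-LA}, between the explicit elements $sr^i$ and $[s,sr^i]=r^i+r^{-i}$ of $\g$ and the upper bound from Lemma~\ref{lemma:g-in-D(g)+span} together with $\dim\fD(\CG)=2m-\abs{\Irr_s(G)}$. Your remark that directness is not needed is right, since the chain $\dim\g \geq \dim\fD(\CG)+\dim(\CSrm)^G \geq \dim\bigl(\fD(\CG)+(\CSrm)^G\bigr) \geq \dim\g$ forces equality throughout, with directness following as a byproduct.
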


\subsection{Type \texorpdfstring{$H_3$}{H3}}

\begin{theorem} \label{thm:D(H3)-LSA}
For the reflection group $G = W(H_3)$ of type $H_3$, the Lie superalgebra inclusion \eqref{eq:LSA-inclusion} is an equality, i.e., $\g = \fD(\CG) + (\CSrm)^G = \fD(\CG) + \Span\smallset{\calC_\ell}$.
\end{theorem}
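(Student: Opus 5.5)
The plan is to prove the equality by dimension comparison. Lemma \ref{lemma:g-in-D(g)+span} already gives the inclusion $\g \subseteq \fD(\CG) + (\CSrm)^G$, since \ref{item:S-union-conj-classes} always holds. It therefore suffices to compute the dimension of the right-hand side and to show that $\dim(\g)$ is at least that large. A structural argument in the style of Lemmas \ref{lemma:D(gzero)} and \ref{lemma:W(gn)} is not directly available, because $H_3$ is not among the types covered by Lemma \ref{lemma:S'-generator}; so I would rely on a computer calculation, exactly as in the proof of Theorem \ref{thm:D(H3)-LA}.

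First I would determine the right-hand side explicitly. Recall $G = W(H_3) \cong \fA_5 \times \{\pm 1\}$, of order $120$, with irreducible dimensions $1,1,3,3,3,3,4,4,5,5$ (Table \ref{tab:H3}). The central element $-1$ is a product of three reflections, so $\ve(-1) = -1$. Since $-1$ acts by a scalar $\pm 1$ on each simple module, tensoring with $\ve$ flips that scalar, so no simple module satisfies $V \cong V^\ve$. Hence $\calF(G) = \emptyset$ and $\abs{\calE(G)/\!\!\sim} = 5$. By Proposition \ref{prop:simple-supermodules} every simple $\CG$-supermodule is then of Type Q, and there are exactly five of them. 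Equation \eqref{eq:D(LieCG)} then gives
\[
\dim \fD(\CG) = 120 - 5 = 115 .
\]

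Next I would check that $\calC_\ell \notin \fD(\CG)$, so that the sum has dimension $116$. Under \eqref{eq:super-artin-wedderburn-CG}, the odd central element $\calC_\ell$ maps in each $Q(W)$ to an odd element commuting with $J$. Such an element has the form $\left[\begin{smallmatrix} 0 & B \\ B & 0 \end{smallmatrix}\right]$ with $B$ a scalar $c_W \cdot \id$, where $c_W$ is essentially the central character of $\calC_\ell$ on $S^\lambda$. Since $\calC_\ell$ acts on $V_{\refl}$ by a nonzero scalar ($15$ reflections, trace $15$ over dimension $3$), the corresponding $\tr(B) \neq 0$. This violates the defining condition of $\sq(W)$, so $\calC_\ell \notin \fD(\CG)$.

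Finally, I would compute $\dim(\g)$ in GAP by iterated super-brackets of the $15$ reflections inside the $120$-dimensional space $\CG$, and expect to obtain $116$. Equality of dimensions combined with Lemma \ref{lemma:g-in-D(g)+span} then gives the theorem. This step is the only real obstacle, but because $\abs{G} = 120$ the linear algebra is tiny, so I anticipate no difficulty.

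If a computer-free argument were wanted, I would instead try to verify \ref{item:gzero-generates-CGzero} and \ref{item:S'-one-class-gen-G0} directly for $\Gzero \cong \fA_5$, taking $S'$ to be the class of double transpositions (products of two commuting reflections, which lie in $\gzero$). Then Lemma \ref{lemma:W(gn)} together with a separation-of-ideals argument would finish the proof. However, \ref{item:inductive-subgroup} and \ref{item:no-G-simple-dim-2} would need ad hoc replacements, so the dimension count above is the route I would actually follow.
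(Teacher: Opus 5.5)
Your proposal is correct and follows the paper's proof: the inclusion from Lemma~\ref{lemma:g-in-D(g)+span}, the count of five Type Q and no Type M supermodules giving $\dim(\g) \leq 120 - 5 + 1 = 116$, and a GAP computation showing $\dim(\g) = 116$. Your additional check that $\calC_\ell \notin \fD(\CG)$ is valid but not needed, because $\dim(\fD(\CG) + \C \cdot \calC_\ell) \leq 116$ together with $\dim(\g) = 116$ already forces equality.
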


\begin{proof}
Let $G = W(H_3)$ be the Coxeter group of type $H_3$, which has order $120$. A list of the simple $\CG$-modules and their dimensions is given in Table \ref{tab:H3}. From the table one deduces that there are $5$ simple $\CG$-supermodules of Type Q, and none of Type M. Then \eqref{eq:LSA-inclusion} and \eqref{eq:D(LieCG)} imply that $\dim(\g) \leq 116$. On the other hand, using GAP \cite{GAP4} we have calculated that $\dim(\fg) = 116$. Thus \eqref{eq:LSA-inclusion} must be an equality.
\end{proof}

\subsection{Type \texorpdfstring{$H_4$}{H4}}

\begin{theorem} \label{thm:D(H4)-LSA}
For the reflection group $G = W(H_4)$ of type $H_4$, the Lie superalgebra inclusion \eqref{eq:LSA-inclusion} is an equality, i.e., $\g = \fD(\CG) + (\CSrm)^G = \fD(\CG) + \Span\smallset{\calC_\ell}$.
\end{theorem}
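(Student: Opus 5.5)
The plan is to run the general machinery of Section \ref{subsec:Lie-superalgebra-generalities}, i.e., to verify the assumptions \ref{item:S-union-conj-classes}--\ref{item:no-G-simple-dim-2} for $G = W(H_4)$. Then Lemma \ref{lemma:W(gn)} identifies the image $\rho_W(\g)$ in every simple supermodule. A separate argument is needed to show that $\g$ is the full sum rather than a diagonal-type subalgebra.

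First, \ref{item:S-union-conj-classes} is automatic, and \ref{item:gzero-generates-CGzero} and \ref{item:S'-one-class-gen-G0} hold by Lemma \ref{lemma:S'-generator}. Condition \ref{item:no-G-simple-dim-2} is read off from Table \ref{tab:H4}, since the smallest nontrivial simple $\CG$-modules are $4$-dimensional.

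For \ref{item:inductive-subgroup} I would take $H = W(H_3) = \subgrp{s_1,s_2,s_3}$ and $S_H$ its reflections. Theorem \ref{thm:D(H3)-LSA} gives $\fh = \fD(\CH) + (\CSrm_H)^H$, so $\fh_{\zero} = \fD(\CH)_{\zero}$. The remaining conditions come from the tables: $W(H_3)$ has all simple modules of type $E$ with dimensions $1,3,3,4,5$, so $\Hzero$ has no $2$-dimensional simples. Every simple $\CGzero$-module of dimension $>1$ restricts to $\Hzero$ with a factor of dimension $>1$; this is immediate by dimension count, since $\Hzero$ has only one $1$-dimensional simple.

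The main obstacle is \ref{item:D(gzero)-to-sl(V)}: surjectivity of $\rho_V:\fD(\gzero)\to\fsl(V)$ for each $V \in \Irr(\Gzero)$. Since $\fD(\hzero) \cong \bigoplus_{U\in\Irr(\Hzero)}\fsl(U) \subseteq \fD(\gzero)$, I would imitate the proof of Theorem \ref{thm:H4-D(s)-LA}. Compute $\Res^{\Gzero}_{\Hzero}(V)$ via Remark \ref{rem:even-subgroup-branching} and Table \ref{tab:H4}. Then use Lemma \ref{lemma:simple-ideal-criterion}, with multiplicity-one factors and Proposition \ref{prop:simples-equivalent-G0-g0}, to get that $\rho_V(\fD(\gzero))$ is simple. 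Next, bound its rank from below by $\rk(\rho_V(\fD(\hzero)))$, and apply \cite[Lemme 13]{Marin:2007} when $\dim V < 2\rk$. Where only $\dim V<4\rk$ holds, apply \cite[Lemme 14]{Marin:2007} and exclude orthogonal or symplectic images using an invariant-form argument for $\Gzero$, analogous to Lemma \ref{lemma:form-implies-dual}.

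I expect several large modules (the $S^{\lambda^\pm}$ of dimension $24$ and $48/2$, and the $48$-dimensional one) to fail the simple rank inequality. For those I would fall back on a GAP computation of $\dim\rho_V(\gzero)$, or of $\dim\gzero$ compared against $\sum(\dim(V)^2-1)$ plus the center, as was done for $\dim(\fs)$.

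Once \ref{item:S-union-conj-classes}--\ref{item:no-G-simple-dim-2} hold, Lemma \ref{lemma:D(gzero)} gives $\fD(\gzero)=\fD(\fD(\CG)_{\zero})$. Lemma \ref{lemma:W(gn)} gives $\rho_W(\g)=\sq(W)+\rho_W((\CSrm)^G)$ or $\fsl(W)$ for each $W\in\Irr_s(G)$. To pass from these componentwise images to the full sum, note that $\g$ contains $\fD(\gzero)$, which is the entire even part of $\bigoplus_W \fsl(W)_{\zero}$ or $\sq(W)_{\zero}$ modulo centers. The odd part $\g_{\one}$ is a $\fD(\gzero)$-module whose projection to each summand is surjective. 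The odd parts of different summands are pairwise non-isomorphic $\fD(\gzero)$-modules, because they involve disjoint sets of $\fsl(V)$ factors. Hence $\g_{\one} \supseteq \bigoplus_W \rho_W(\fD(\CG))_{\one}$. Lemma \ref{lem:generated-by-odd} then yields $\fD(\CG)\subseteq \g$, and Lemma \ref{lemma:g-in-D(g)+span} gives equality. Alternatively, if \ref{item:D(gzero)-to-sl(V)} proves intractable theoretically, a direct GAP computation of $\dim \g = 14400 - |\Irr_s(G)| + 1$ would settle the theorem by dimension comparison, though likely at prohibitive cost.
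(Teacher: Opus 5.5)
The paper does not run the machinery of Section~\ref{subsec:Lie-superalgebra-generalities} for $H_4$ at all. It counts $9$ Type Q and $16$ Type M simple supermodules, so Lemma~\ref{lemma:g-in-D(g)+span} and \eqref{eq:D(LieCG)} give $\dim\g\le 14400-25+1=14376$. A GAP computation (about $54$ hours) then gives $\dim\g=14376$. That is exactly your closing ``alternative'', and it is not prohibitive.

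Your main route is the $E_6$-style reduction with $H=W(H_3)$. As written it does not close, and it fails at the step you yourself flagged as the crux, \ref{item:D(gzero)-to-sl(V)}.
\begin{itemize}
\item Since $\Irr(\Hzero)=\Irr(A_5)$ has dimensions $1,3,3,4,5$, one always has $\rk(\fh_V)\le 11$. So \cite[Lemme 13]{Marin:2007} is unavailable for the $\CGzero$-simples of dimension $24$ (the two halves of $V_{34}$), $25$ (from $V_{27}$) and $36$ (from $V_{31}$). It is also unavailable for $\Res(V_{\refl})$, where $\dim V=4=2\rk(\fh_V)$.
\item Your list of problem cases is off: there is no $48$-dimensional $\CGzero$-simple, and the $S^{\lambda^\pm}$ coming from $V_{23},\dots,V_{26}$ are $12$-dimensional.
\item Your plan to exclude $\fso(V)$ and $\fsp(V)$ ``analogously to Lemma~\ref{lemma:form-implies-dual}'' has no mechanism. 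That lemma rests on Corollary~\ref{cor:tensor-sign-restriction}, i.e.\ on the antiautomorphism $g\mapsto\ve(g)g^{-1}$ acting by $-1$ on $\fs$. On $\gzero$ this antiautomorphism fixes $[s,t]=st+ts$, so there is no group-theoretic description of $V^{*,\Lie}$ for $\fD(\gzero)$.
\item Moreover every simple $\CGzero$-module is self-dual (all characters of $\Gzero\cong(SL_2(5)\times SL_2(5))/\{\pm1\}$ are real), so no contradiction of the form ``$V\cong V^*$'' could arise anyway.
\item What does work is to use $\fh_V\subseteq\g_V$ directly. For each ideal $\fsl(W)$ of $\fh_V$, the $\fsl(W)$-composition factors of $V$ are copies of $W$ (with $\dim W\ge 3$) and trivial modules, never $W^*$. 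Hence $V$ is not self-dual over $\fh_V$, so $\g_V$ preserves no nondegenerate bilinear form. Together with \cite[Lemme 14]{Marin:2007}, this leaves only $\fsl(V)$ in these cases.
\end{itemize}

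The simplicity step also breaks. The $36$-dimensional $\CGzero$-simple restricts to $\Hzero$ with the trivial module once, the two $3$-dimensional simples and the $4$-dimensional simple twice each, and the $5$-dimensional simple three times. So Lemma~\ref{lemma:simple-ideal-criterion} has no multiplicity-one factor to work with. You need a Lemma~\ref{lemma:V34}-type argument: the multiplicities $3,2,2,2$ rule out tensor factors of dimension $2$ or $3$, and then every tensor factor has dimension at least $3+3+4+5=15$.

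Two smaller points.
\begin{itemize}
\item For \ref{item:inductive-subgroup}(d), a dimension count is not enough, since a simple $\CGzero$-module could a priori restrict to a multiple of the trivial module. You must show that $\Hzero$ acts nontrivially, e.g.\ from the branching data.
\item In the final assembly, Lemma~\ref{lem:generated-by-odd} does not cover the Type Q summand $\bbW=\C_{\triv}\oplus\ve$, where $\sq(\bbW)=\C\cdot\id_{\bbW}$. You need the separate argument from the proof of Theorem~\ref{thm:D(E6)-LSA}, which writes $\id_{\bbW}$ as $1_{\CG}$ minus the identities of the other supermodules.
\end{itemize}

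With these repairs your route appears to close using only character calculations, which would be a real gain over the paper's $54$-hour computation. As written, however, \ref{item:D(gzero)-to-sl(V)} remains unproved exactly where it matters, and you fall back on GAP there.
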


\begin{proof}
Let $G = W(H_4)$ be the Coxeter group of type $H_4$, which has order $14400$. A list of the simple $\CG$-modules and their dimensions is given in Table \ref{tab:H4}. From the table one deduces that there are $9$ simple $\CG$-supermodules of Type Q, and $16$ of Type M. Then \eqref{eq:LSA-inclusion} and \eqref{eq:D(LieCG)} imply that $\dim(\g) \leq 14376$. On the other hand, using GAP \cite{GAP4} we have calculated\footnote{Our calculation took around 54 hours to complete.} that $\dim(\fg) = 14376$. Thus \eqref{eq:LSA-inclusion} must be an equality.
\end{proof}

\subsection{Type \texorpdfstring{$F_4$}{F4}}

\begin{theorem} \label{thm:D(F4)-LSA}
For the reflection group $G = W(F_4)$ of type $F_4$, the Lie superalgebra inclusion \eqref{eq:LSA-inclusion} is an equality, i.e., $\g = \fD(\CG) + (\CSrm)^G = \fD(\CG) + \Span\smallset{\calC_\ell,\calC_s}$.
\end{theorem}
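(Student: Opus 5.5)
The plan is a dimension comparison, as in the proofs of Theorems \ref{thm:D(H3)-LSA} and \ref{thm:D(H4)-LSA}. Lemma \ref{lemma:g-in-D(g)+span} already gives the inclusion $\g \subseteq \fD(\CG) + (\CSrm)^G$, so it suffices to compute $\dim\big(\fD(\CG) + (\CSrm)^G\big)$ and check that $\dim(\g)$ equals it.

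\emph{Why not the structural route.} One could instead try to verify \ref{item:S-union-conj-classes}--\ref{item:no-G-simple-dim-2} and apply Lemma \ref{lemma:W(gn)}. Lemma \ref{lemma:S'-generator} supplies \ref{item:gzero-generates-CGzero} and \ref{item:S'-one-class-gen-G0} for $F_4$. However, $G = W(F_4)$ has four $2$-dimensional simple modules $V_5,\dots,V_8$, so \ref{item:no-G-simple-dim-2} fails. I would therefore avoid that route and rely on a direct count plus computer calculation.

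\emph{Step 1: the upper bound.} Using Table \ref{tab:F4} and the identification $V_{13} = V_5 \otimes V_7$ together with Lemma \ref{lemma:D(s)-F4-injection}, I determine which $\lambda$ satisfy $\lambda = \lambda'$. The self-associate modules are exactly those in $F(G)$ together with any exterior powers of $V_{\refl}$ fixed by $\ve$. This gives $\abs{\Irr_Q(G)} = \abs{\calE(G)}/2$ and $\abs{\Irr_M(G)} = \abs{\calF(G)}$. Then \eqref{eq:D(LieCG)} yields $\dim \fD(\CG) = 1152 - \abs{\Irr_s(G)}$.

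Next I determine how much of the two-dimensional space $(\CSrm)^G = \Span\{\calC_\ell,\calC_s\}$ lies outside $\fD(\CG)$. Via \eqref{eq:LieCG}, the quotient $\Lie(\CG)/\fD(\CG)$ is spanned by even identities and by one odd functional per Type Q summand, namely $\tr(B)$ for the odd block of $\fq(W)$. For $W = W^\lambda$ with $W = S^\lambda \oplus S^{\lambda'}$, this odd trace of $\rho_W(x)$ is, up to a nonzero scalar, $\chi_\lambda(x) - \chi_{\lambda'}(x) = 2\chi_\lambda(x)$ for odd central $x$. The contribution of $(\CSrm)^G$ is therefore the rank of the matrix
\[
\big(\chi_\lambda(\calC_\ell),\, \chi_\lambda(\calC_s)\big)_{\lambda \in \calE(G)/\sim}.
\]
I expect this rank to be $2$. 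For instance, the linear characters $\ve'$ and $\ve''$ already separate $\calC_\ell$ from $\calC_s$, and they are of type $\calE$. Rank $2$ gives the bound $\dim(\g) \leq 1152 - \abs{\Irr_s(G)} + 2$.

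\emph{Step 2: the lower bound.} Using GAP, I compute $\dim(\g)$ by iteratively bracketing the $24$ reflections, of both the long and short classes, inside $\Lie(\CG)$ until the span stabilizes. I then check that the result equals the bound from Step 1, which forces equality in \eqref{eq:LSA-inclusion}.

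\emph{Main obstacle.} The main risk is bookkeeping in Step 1. I must correctly identify the $\ve$-orbits among the $25$ irreducible characters and confirm that both class sums are independent modulo $\fD(\CG)$. If an independence check failed, the bound would drop by one and the GAP dimension would have to match that smaller number instead. The GAP computation itself should be routine, since $\abs{G} = 1152$ is small compared to $H_4$.
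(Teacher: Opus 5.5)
This is the paper's proof: Table~\ref{tab:F4} gives $10$ Type Q and $5$ Type M simple supermodules, so $\dim\fD(\CG)=1152-15=1137$ and $\dim(\g)\le 1139$, and the GAP calculation returning $\dim(\g)=1139$ forces equality. One small correction to your Step~1: since $\ve''=\ve'\otimes\ve$, the characters $\ve'$ and $\ve''$ form a single Type Q summand and contribute only one row $(12,-12)$ up to sign, so rank $2$ comes from pairing that row with the trivial/sign row $(12,12)$; in any case this independence check is unnecessary, because the crude bound $\dim\fD(\CG)+2=1139$ already suffices once GAP returns $1139$.
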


\begin{proof}
Let $G = W(F_4)$ be the Coxeter group of type $F_4$, which has order $1152$. A list of the simple $\CG$-modules and their dimensions is given in Table \ref{tab:F4}. From the table one deduces that there are $10$ simple $\CG$-super\-modules of Type Q, and $5$ of Type M. Then \eqref{eq:LSA-inclusion} and \eqref{eq:D(LieCG)} imply that $\dim(\g) \leq 1139$. On the other hand, using GAP \cite{GAP4} we have calculated that $\dim(\fg) = 1139$. Thus \eqref{eq:LSA-inclusion} must be an equality.
\end{proof}

\subsection{Type \texorpdfstring{$E_6$}{E6}}

Let $G = W(E_6)$ be the reflection group of type $E_6$. Let $S_0 = \set{s_1,\ldots,s_5,s_6}$ be a set of Coxeter generators for $G$ such that the corresponding Coxeter graph is labeled as in \cite[Theorem 1.3.3]{Geck:2000}, and let $H \cong W(D_5)$ be the subgroup of $G$ generated by the set $\set{s_1,\ldots,s_5}$. Let $S_H$ be the set of reflections in $H$.

\begin{lemma} \label{lemma:A1-A6-E6-D5}
With notation as in the previous paragraph, conditions \ref{item:S-union-conj-classes}--\ref{item:no-G-simple-dim-2} are satisfied for the group $G$ relative to the subgroup $H$.
\end{lemma}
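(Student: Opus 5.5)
We verify the six conditions one at a time. Most of them reduce to facts already established in the paper, and the rest are finite character checks in GAP.

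Condition \ref{item:S-union-conj-classes} holds for any reflection group. For $G$ itself, Lemma \ref{lemma:S'-generator} with $n=6$ gives a single $\Gzero$-conjugacy class $S' \subseteq \gzero$ of involutions generating $\Gzero$, namely the class of $s_1s_6$. This yields \ref{item:gzero-generates-CGzero} and \ref{item:S'-one-class-gen-G0}.

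Condition \ref{item:no-G-simple-dim-2} is read off from the character table of $W(E_6)$, whose irreducible degrees are $1,6,10,15,20,24,30,60,64,80,81,90$. There is no $2$.

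For \ref{item:inductive-subgroup} we take $H = W(D_5)$ and $S_H$ its reflections.
\begin{itemize}
\item Part (a) for $H$ follows from Lemma \ref{lemma:S'-generator} in type $D_5$, using the class of $s_1s_5$ in $H$, which lies in $\fh_{\zero}$.
\item Part (b) is the main result of \cite{DK:2026b} in type $D$: the equality $\fh = \fD(\CH) + (\C S_H)^H$, recorded in Remark \ref{rem:reverse-deductions}. Taking even parts, and noting that $(\C S_H)^H$ is odd, gives $\fh_{\zero} = \fD(\CH)_{\zero}$. The isomorphism \eqref{eq:D(hzero)} then follows from \eqref{eq:D(LieCG)} restricted to even parts, exactly as in Lemma \ref{lemma:D(gzero)} applied to $H$.
\item For part (c), we use Lemma \ref{lemma:classical-Clifford} and the degrees of $W(D_5)$ from Table \ref{tab:D5}. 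A $2$-dimensional simple $\CHzero$-module would come either from a $2$-dimensional $S^\mu$ with $\mu \ne \mu'$, or from a $4$-dimensional $S^\mu$ with $\mu = \mu'$. Neither occurs, since the nonlinear degrees are $4,5,6,10,\dots$ and the $4$-dimensional modules are not self-associate. This is checked from the table.
\item Part (d) follows from Remark \ref{rem:even-subgroup-branching} and the explicit branching $\Res^G_H$ computed in GAP, as used in the proof of Theorem \ref{thm:D(E6)-LA}. Every nonlinear simple $\CG$-module restricts to $H$ with a factor of dimension $>1$, and since its pieces under $\Hzero$ have dimension at least $2$ by (c), the same holds for each $S^{\lambda^\pm}$.
\end{itemize}

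The main obstacle is \ref{item:D(gzero)-to-sl(V)}: for each $V \in \Irr(\Gzero)$ we need $\rho_V(\fD(\gzero)) = \fsl(V)$. The plan mirrors the simplicity/rank argument in the proof of Theorem \ref{thm:D(E6)-LA}, now for $\Gzero \supset \Hzero$.

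First, $\rho_V(\fD(\gzero))$ contains $\rho_V(\fD(\fh_{\zero}))$. By \eqref{eq:D(hzero)}, this is a sum of copies of $\fsl(U)$, one for each simple $\CHzero$-constituent $U$ of $V$, with distinct isomorphism types giving distinct ideals.

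Second, using the GAP branching together with Remark \ref{rem:even-subgroup-branching}, we exhibit multiplicity-one constituents $W_j$ satisfying $(\sum_j \dim W_j)^2 > \dim V$. Lemma \ref{lemma:simple-ideal-criterion} then shows $\rho_V(\fD(\gzero))$ is simple.

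Third, we compare ranks: check $2\,\rk(\fh_V) > \dim V$ and invoke \cite[Lemme 13]{Marin:2007}, so that $\rho_V(\fD(\gzero)) = \fsl(V)$. In borderline cases we instead use \cite[Lemme 14]{Marin:2007}. Any exceptional orthogonal or symplectic possibility is then excluded by a dual-pair argument: the $\Gone$-conjugate of $S^{\lambda^+}$ is $S^{\lambda^-}$, which is a different module. Alternatively, it is excluded by a direct GAP dimension computation of $\rho_V(\fD(\gzero))$.

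Of all the steps, the rank inequalities for the largest modules are the most likely to require ad hoc handling. In particular this applies to the halves of the self-associate modules, e.g.\ the $40$- or $45$-dimensional pieces.
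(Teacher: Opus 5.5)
Your proposal is correct and follows the paper's proof essentially step for step. It obtains (A1)--(A3) and (A6) from Lemma~\ref{lemma:S'-generator} and inspection, and (A4) from the type $D$ results of \cite{DK:2026b}, Lemma~\ref{lemma:classical-Clifford}, and the GAP branching via Remark~\ref{rem:even-subgroup-branching}. It obtains (A5) from Lemma~\ref{lemma:simple-ideal-criterion} followed by the rank bound $2\cdot\rk(\fh_V) > \dim(V)$ and \cite[Lemme 13]{Marin:2007}. The paper observes that this strict inequality holds for every $V \in \Irr(\Gzero)$, so your borderline contingency never arises. That is just as well, since your ``dual-pair'' fallback has no $\gzero$-analogue of Lemma~\ref{lemma:form-implies-dual} behind it; only the direct GAP dimension check would be a valid fallback.
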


\begin{proof}
Condition \ref{item:S-union-conj-classes} is satisfied because $G$ is a reflection group, \ref{item:gzero-generates-CGzero} and \ref{item:S'-one-class-gen-G0} are satisfied by Lemma \ref{lemma:S'-generator}, and \ref{item:no-G-simple-dim-2} holds by inspection. Parts (a) and (b) of \ref{item:inductive-subgroup} are true because $H$ is a reflection group, and by  \cite[Lemmas 5.5.2 and 5.5.3]{DK:2026b} together with the explanation in the first paragraph of the proof of \cite[Lemma 2.5.2]{DK:2026b}. Part (c) of \ref{item:inductive-subgroup} follows from Lemma \ref{lemma:classical-Clifford}, since by observation (see Table \ref{tab:D5}) there are no simple $\CH$-modules of dimension $2$, and there is no simple $\CH$-module $V$ of dimension $4$ such that $V \cong V^\ve$. Part (d) of \ref{item:inductive-subgroup} can be verified in a similar fashion by observing for each $V \in \Irr(G)$ the composition factor multiplicities of $\Res^G_H(V)$ (which can be explicitly determined by character calculations in GAP) and by applying Remark \ref{rem:even-subgroup-branching}.

It remains to verify \ref{item:D(gzero)-to-sl(V)}. Let $V \in \Irr(\Gzero)$; we may evidently assume that $\dim(V) > 1$. Either $V = S^\lambda$ for some $\lambda \in \calE(G)$, or $V = S^{\lambda^\pm}$ for some $\lambda \in \calF(G)$. Set $\g_V = \rho_V(\fD(\gzero))$ and $\fh_V = \rho_V(\fD(\hzero))$. Then $\fh_V \subseteq \g_V$ are semi\-simple Lie algebras by Proposition \ref{prop:g0-reductive} (for $\g_V$) and part (b) of \ref{item:inductive-subgroup} (for $\fh_V$), and $\g_V$ acts irreducibly on $V$ by Lemma \ref{lemma:simple-G0-g0-D(g0)}. It follows from \eqref{eq:D(hzero)} that,
	\begin{equation} \label{eq:hV}
	\fh_V \cong \bigoplus_{\substack{W \in \Irr(\Hzero) \\ W \leq V}} \fsl(W),
	\end{equation}
i.e., $\fh_V$ consists of a copy of $\fsl(W)$ for each $W \in \Irr(\Hzero)$ that occurs with multiplicity at least one in $V$. By Remark \ref{rem:even-subgroup-branching}, the dimensions of the $\CHzero$-module composition factors in $V$ can be deduced from the dimensions of the $\CH$-module composition factors in $\Res^G_H(S^\lambda)$, which in turn can be explicitly computed by character calculations in GAP. Then using Lemma~\ref{lemma:simple-ideal-criterion} one can show that $\g_V$ is a simple Lie algebra. Next, applying \eqref{eq:hV} and Remark \ref{rem:even-subgroup-branching} again, one can compute the rank of the semisimple Lie algebra $\fh_V$. By observation, one sees that $2 \cdot \rk(\fh_V) > \dim(V)$. Since $\fh_V \subseteq \g_V$, then $\rk(\fh_V) \leq \rk(\g_V)$, so this implies that $\g_V = \fsl(V)$, by \cite[Lemme 13]{Marin:2007}.
\end{proof}

\begin{theorem} \label{thm:D(E6)-LSA}
For the reflection group $G = W(E_6)$ of type $E_6$, the Lie superalgebra inclusion \eqref{eq:LSA-inclusion} is an equality, i.e., $\g = \fD(\CG) + (\CSrm)^G = \fD(\CG) + \Span\smallset{\calC_\ell}$.
\end{theorem}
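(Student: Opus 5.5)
By Lemma \ref{lemma:g-in-D(g)+span} we already have $\g \subseteq \fD(\CG) + (\CSrm)^G$, and since $W(E_6)$ has a single class of reflections, $(\CSrm)^G = \Span\smallset{\calC_\ell}$. So only the reverse inclusion remains. The plan is to apply the general machinery of Section \ref{subsec:Lie-superalgebra-generalities}; Lemma \ref{lemma:A1-A6-E6-D5} says that all of \ref{item:S-union-conj-classes}--\ref{item:no-G-simple-dim-2} hold for $G$ relative to $H \cong W(D_5)$.

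First, Lemma \ref{lemma:W(gn)} determines the image of $\g$ in each Artin--Wedderburn summand of \eqref{eq:LieCG}. For Type Q supermodules $W$ it is $\rho_W(\g) = \sq(W) + \rho_W(\calC_\ell)$, and for Type M it is $\fsl(W)$. We then need to pass from these componentwise surjections to the whole of $\fD(\CG) \cong \bigoplus \sq(W) \oplus \bigoplus \fsl(W)$. Two facts drive this:
\begin{itemize}
\item Lemma \ref{lemma:D(gzero)} gives $\fD(\gzero) = \fD(\fD(\CG)_{\zero})$, so the even part of $\fD(\g)$ already realizes the full even part of every summand independently.
\item $\fD(\g)$ is an ideal-type object stable under $\fD(\gzero)$. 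Its odd part $\fD(\g)_{\one}$ is therefore a $\fD(\gzero)$-submodule of $\fD(\CG)_{\one} = \bigoplus_W \rho_W^{-1}(\cdot)_{\one}$.
\end{itemize}

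For the odd part I would argue as follows. The odd component of each summand, $\sq(W)_{\one}$ or $\fsl(W)_{\one}$, is a module for the corresponding even semisimple algebra: $\fsl(W_{\zero}) \oplus \fsl(W_{\one})$ in Type M, and $\fsl(W_{\zero})$ acting on traceless matrices in Type Q. These modules have no common simple constituents across different $W$, because the simple factors $\fsl(V)$ for distinct $V \in \Irr(\Gzero)$ are distinct ideals of $\fD(\gzero)$ by \eqref{eq:D(D(CG)0)-AW-iso}. Each such odd component is generated, up to the small trivial pieces, by its irreducible summands on which only that summand's ideal acts nontrivially. Since $\rho_W(\fD(\g)_{\one})$ is the full odd part for each $W$, and each odd isotypic component sits in a single summand, the submodule $\fD(\g)_{\one}$ must contain $\fD(\CG)_{\one}$. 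Lemma \ref{lem:generated-by-odd} then shows that the odd part generates each $\sq(W)$ and $\fsl(W)$; this uses the absence of 2-dimensional pieces guaranteed by \ref{item:no-G-simple-dim-2}. Hence $\fD(\CG) \subseteq \g$, and adding $\calC_\ell \in \g$ gives equality.

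This is presumably the same argument as in \cite[\S2.5]{DK:2026b}, which I expect contains a general theorem, "(A1)--(A6) imply \eqref{eq:LSA-goal}", that can simply be cited after Lemma \ref{lemma:A1-A6-E6-D5}.

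The main obstacle is the isotypic separation step. We must confirm that distinct summands share no simple $\fD(\gzero)$-constituents in their odd parts. A Type Q summand's odd part is $\fsl(V)$ or $\gl(V)$-adjoint, and a Type M summand's odd part is $V_+ \otimes V_-^* \oplus V_- \otimes V_+^*$. The trace parts are trivial and need care, but these are exactly excluded in $\sq$ and $\fsl$.

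Failing that route, a fallback is a direct dimension computation in GAP. The expected value is $\dim \g = |G| - |\Irr_s(G)| + 1$, computed as for $H_4$, though it is costly for $|G| = 51840$.
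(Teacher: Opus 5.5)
Your handling of the odd part matches the paper's: $\fD(\CG)_{\one}$ splits into $\fD(\gzero)$-isotypic pieces, each lying in a single Artin--Wedderburn summand, and Lemma~\ref{lemma:W(gn)} forces every piece into $\g$. You assert without argument that $\rho_W(\fD(\g)_{\one})$ is the full odd part of each summand; this is true but needs a bracket computation, which the paper avoids by working with $\gone \subseteq \fD(\CG)_{\one} + \C\cdot\calC_\ell$, where $\C\cdot\calC_\ell$ is a trivial $\fD(\gzero)$-module.

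The genuine gap is in the last step. You claim Lemma~\ref{lem:generated-by-odd} shows that the odd part generates every $\sq(W)$ and $\fsl(W)$, and you cite \ref{item:no-G-simple-dim-2}. But \ref{item:no-G-simple-dim-2} only excludes $2$-dimensional simples, while Lemma~\ref{lem:generated-by-odd} needs $m \ge 3$ for $\sq(m)$. The trivial and sign characters together form a Type Q simple supermodule $\bbW$ (equal to $\C_{\triv}\oplus\ve$ as an ungraded module), with $\sq(\bbW)\cong\sq(1)=\C\cdot\id_{\bbW}$. This summand of $\fD(\CG)$ is one-dimensional and purely even, so $\sq(\bbW)_{\one}=0$. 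Consequently no bracket of elements of $\fD(\CG)_{\one}$ has a nonzero $\bbW$-component. The isomorphism $\fD(\gzero)\cong\bigoplus_V\fsl(V)$ also has zero $\bbW$-component, because $\bbW_{\zero}$ is the trivial $\Gzero$-module. For the same reason, your remark that $\fD(\gzero)$ ``realizes the full even part of every summand'' is inaccurate: it misses the scalars $\C\cdot\id_W$, and for $W\neq\bbW$ these are recovered only through generation by the odd part. So your argument never produces $\id_{\bbW}$, and $\fD(\CG)\subseteq\g$ is not established.

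The missing step is short, but it needs the actual dimensions, not \ref{item:no-G-simple-dim-2}. The paper checks from the $W(E_6)$ character table that $\dim W\ge 10$ for Type M and $\dim W\ge 12$ for Type Q with $W\neq\bbW$. Lemma~\ref{lem:generated-by-odd} then gives $\fsl(W),\sq(W)\subseteq\g$, and hence $\id_W\in\g$, for every $W\neq\bbW$. Next, for any reflection $\tau$ one has $1_{\CG}=\tau^2=\tfrac12[\tau,\tau]\in\g$. Therefore $\id_{\bbW}=1_{\CG}-\sum_{W\neq\bbW}\id_W\in\g$. Alternatively, $\tfrac12[\calC_\ell,\calC_\ell]=\calC_\ell^2$ acts on $\bbW$ by a nonzero scalar and on every other summand by a scalar. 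With this added your proof is complete, and neither the appeal to a hypothetical general theorem in \cite{DK:2026b} nor the GAP fallback is needed.
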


\begin{proof}
By \eqref{eq:LSA-inclusion} one has $\g \subseteq \fD(\CG) + \C \cdot \calC_\ell$, and $\calC_\ell \in \g$ because it is the sum of the generators for $\g$, so it remains to show that $\fD(\CG) \subseteq \g$. In the rest of the proof we identify $\CG$ with its image under the isomorphism \eqref{eq:LieCG}.

First we will show that $\fD(\CG)_{\one} \subseteq \fg$. By \eqref{eq:D(LieCG)}, one has
	\begin{align*}
	\fD(\CG)_{\one} &= 
	\Big[ \bigoplus_{W \in \Irr_Q(G)} \sq(W)_{\one} \Big] 
	\oplus 
	\Big[ \bigoplus_{W \in \Irr_M(G)} \fsl(W)_{\one} \Big] \\
	&\cong 
	\Big[ \bigoplus_{\lambda \in \calE(G)/\sim} \fsl(S^\lambda) \Big]
	\oplus 
	\Big[ \bigoplus_{\lambda \in \calF(G)} \Hom(S^{\lambda^+},S^{\lambda^-}) \oplus \Hom(S^{\lambda^-},S^{\lambda^+}) \Big],
	\end{align*}
and by Lemmas \ref{lemma:A1-A6-E6-D5} and \ref{lemma:D(gzero)}, one has
	\begin{align*}
	\fD(\gzero) &\cong 
	\Big[ \bigoplus_{\lambda \in \calE(G)/\sim} \fsl(S^\lambda) \Big] 
	\oplus 
	\Big[ \bigoplus_{\lambda \in \calF(G)} \fsl(S^{\lambda^+}) \oplus \fsl(S^{\lambda^-}) \Big].
	\end{align*}
Then $\fD(\CG)_{\one}$ is a direct sum of pairwise non-isomorphic simple modules for the semisimple Lie algebra $\fD(\gzero)$. Since $\gone$ is a $\fD(\gzero)$-submodule of $\fD(\CG)_{\one} + \C \cdot \calC_\ell$, it must contain some subset of the summands in $\fD(\CG)_{\one}$. Using Lemma \ref{lemma:W(gn)}, we see that each of these summands is contained in the image of the corresponding projection map $\rho_W: \g \to \End(W)$, and, hence, must be contained in $\gone$. Thus $\fD(\CG)_{\one} \subseteq \g$.

Denote by $\bbW$ the Type Q simple $\CG$-supermodule that is equal as an ungraded $\CG$-module to $\C_{\triv} \oplus \ve$, the direct sum of the trivial $\CG$-module and the sign character $\ve$. From the dimensions of the simple $\CG$-modules, one observes that $\dim(W) \geq 10$ for all $W \in \Irr_M(G)$, and $\dim(W) \geq 12$ for all $W \in \Irr_Q(G)$ except for $W = \bbW$. Then applying Lemma \ref{lem:generated-by-odd}, we deduce that $\fsl(W) \subseteq \g$ for all $W \in \Irr_M(G)$, and $\sq(W) \subseteq \g$ for all $W \in \Irr_Q(G)$ except perhaps for $W = \bbW$.

It remains to show that $\sq(\bbW) \subseteq \g$. In this case one has $\sq(\bbW) = \C \cdot \id_{\bbW}$. If $\tau$ is any reflection in $G$, then $\id_{\CG} = \tau^2 = \frac{1}{2} \cdot [\tau,\tau] \in \g$. Under the isomorphism of Corollary \ref{cor:super-artin-wedderburn-CG}, one has $1_{\CG} = \sum_{W \in \Irr_s(G)} \id_W$. Let
	\[
	\Psi = 1_{\CG} - \Big( \sum_{\substack{W \in \Irr_s(G) \\ \dim(W) > 2}} \id_W \Big).
	\]
Then $\Psi \in \g$, because it is a linear combination of elements known to be in $\g$, and $\Psi$ acts as zero on all simple $\CG$-supermodules except for $\bbW$, on which it acts as the identity. In other words, $\Psi = \id_{\bbW}$. Then $\sq(\bbW) \subseteq \g$, and we conclude that $\fD(\CG) \subseteq \g$.
\end{proof}

\subsection{Type \texorpdfstring{$E_7$}{E7}}

Let $G = W(E_7)$ be the reflection group of type $E_7$. Let $S_0 = \set{s_1,\ldots,s_6,s_7}$ be a set of Coxeter generators for $G$ such that the corresponding Coxeter graph is labeled as in \cite[Theorem 1.3.3]{Geck:2000}, and let $H \cong W(E_6)$ be the subgroup of $G$ generated by the set $\set{s_1,\ldots,s_6}$. Let $S_H$ be the set of reflections in $H$.

\begin{lemma} \label{lemma:A1-A6-E7-E6}
With notation as in the previous paragraph, conditions \ref{item:S-union-conj-classes}--\ref{item:no-G-simple-dim-2} are satisfied for the group $G$ relative to the subgroup $H$.
\end{lemma}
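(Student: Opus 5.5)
I will follow the proof of Lemma \ref{lemma:A1-A6-E6-D5} essentially verbatim, with $(E_6,D_5)$ replaced by $(E_7,E_6)$.

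\textbf{Conditions (A1), (A2), (A3), (A6).} Condition \ref{item:S-union-conj-classes} holds because $G$ is a reflection group. Conditions \ref{item:gzero-generates-CGzero} and \ref{item:S'-one-class-gen-G0} hold by Lemma \ref{lemma:S'-generator}, since $E_7$ is on the list there. Condition \ref{item:no-G-simple-dim-2} is read off from the character table of $W(E_7)$: the smallest nonlinear degree is $7$.

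\textbf{Condition (A4).} I take $S_H$ to be the set of reflections of $H = W(E_6)$, so part (a) holds because $H$ is a reflection group. For part (b), Theorem \ref{thm:D(E6)-LSA} gives $\fh = \fD(\CH) + \C\cdot\calC_\ell^H$, where $\calC_\ell^H$ is the class sum of the reflections of $H$. Since $\calC_\ell^H$ is odd, $\fh_{\zero} = \fD(\CH)_{\zero}$. The isomorphism \eqref{eq:D(hzero)} then follows as in the first paragraph of the proof of \cite[Lemma 2.5.2]{DK:2026b}, or directly from Lemma \ref{lemma:A1-A6-E6-D5} together with Lemma \ref{lemma:D(gzero)} for $E_6$. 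Part (c) follows from Lemma \ref{lemma:classical-Clifford} and the $W(E_6)$ character table:
\begin{itemize}
\item there are no $2$-dimensional simple $\CH$-modules;
\item there is no $4$-dimensional $V$ with $V\cong V^\ve$.
\end{itemize}
Hence no simple $\CHzero$-module has dimension $2$. For part (d), I compute $\Res^G_H(S^\lambda)$ for each $\lambda$ by GAP character calculations. I then translate these to $\CHzero$-factors of $S^\lambda$ or $S^{\lambda^\pm}$ via Remark \ref{rem:even-subgroup-branching}, and check that a factor of dimension greater than $1$ always appears.

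\textbf{Condition (A5).} This is the main step. For $V\in\Irr(\Gzero)$ with $\dim V>1$, set $\g_V = \rho_V(\fD(\gzero))$ and $\fh_V = \rho_V(\fD(\hzero))$.
\begin{itemize}
\item By \eqref{eq:D(hzero)}, $\fh_V\cong\bigoplus_{W\le V}\fsl(W)$, where $W$ runs over $\Irr(\Hzero)$.
\item $\g_V$ is semisimple by Proposition \ref{prop:g0-reductive}, and it acts irreducibly on $V$ by Lemma \ref{lemma:simple-G0-g0-D(g0)}.
\end{itemize}
Using the GAP branching data and Remark \ref{rem:even-subgroup-branching}, I choose multiplicity-one $\CHzero$-factors $W_1,\dots,W_s$ of $V$ with $(\sum_j\dim W_j)^2>\dim V$. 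Lemma \ref{lemma:simple-ideal-criterion} then shows that $\g_V$ is simple. Next I compute $\rk(\fh_V)=\sum_{W\le V}(\dim W-1)$ and verify $2\rk(\fh_V)>\dim V$, so that $\g_V=\fsl(V)$ by \cite[Lemme 13]{Marin:2007}.

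\textbf{Expected obstacle.} Unlike restriction from $E_6$ to $D_5$, restriction from $E_7$ to $E_6$ can have multiplicities. The rank inequality $2\rk(\fh_V)>\dim V$ may therefore fail for the largest modules, namely the $512$-dimensional ones or the halves $S^{\lambda^\pm}$ of self-associate modules.

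\begin{itemize}
\item If only $\dim V<4\rk(\fh_V)$ holds, I would fall back on \cite[Lemme 14]{Marin:2007}. That lemma forces a nondegenerate $\g_V$-invariant bilinear form on $V$.
\item I would rule such a form out by an analogue of Lemma \ref{lemma:form-implies-dual} for $\Gzero$, built from Proposition \ref{prop:simples-equivalent-G0-g0}: the form makes $V\cong V^*$ as $\fD(\gzero)$-modules, which I would compare with the $\Gzero$-structure.
\item As a last resort, I would compute $\dim\g_V$ directly in GAP.
\end{itemize}
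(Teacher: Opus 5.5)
Your proposal is correct and takes the paper's route: the paper's proof of this lemma says only that it is entirely parallel to the $E_6$ case, and you carry out that parallel with $H = W(E_6)$, using Theorem~\ref{thm:D(E6)-LSA} to get $\fh_{\zero} = \fD(\CH)_{\zero}$ in \ref{item:inductive-subgroup}(b). The obstacle you anticipate does not in fact arise, for two reasons: since $w_0 = -1$ is central and odd in $W(E_7)$, no simple $\CG$-module is self-associate, so there are no halves $S^{\lambda^\pm}$; and for the $512$-dimensional module, my own restriction calculation indicates that its self-associate $W(E_6)$-constituents each split into two distinct simple $\CHzero$-modules, which makes $2\cdot\rk(\fh_V) > \dim(V)$ hold, so the fallback via \cite[Lemme 14]{Marin:2007} is unnecessary.
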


\begin{proof}
The proof is entirely parallel to the argument given previously for type $E_6$.
\end{proof}

\begin{theorem} \label{thm:D(E7)-LSA}
For the reflection group $G = W(E_7)$ of type $E_7$, the Lie superalgebra inclusion \eqref{eq:LSA-inclusion} is an equality, i.e., $\g = \fD(\CG) + (\CSrm)^G = \fD(\CG) + \Span\smallset{\calC_\ell}$.
\end{theorem}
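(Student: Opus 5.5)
The plan is to mirror the proof of Theorem \ref{thm:D(E6)-LSA} essentially verbatim, with Lemma \ref{lemma:A1-A6-E7-E6} in place of Lemma \ref{lemma:A1-A6-E6-D5}. First, Lemma \ref{lemma:g-in-D(g)+span} gives $\g \subseteq \fD(\CG) + (\CSrm)^G$. In type $E_7$ all reflections are conjugate, so $(\CSrm)^G = \C \cdot \calC_\ell$, and $\calC_\ell \in \g$ since it is a sum of generators. It therefore suffices to show $\fD(\CG) \subseteq \g$. Throughout, we identify $\CG$ with its image under \eqref{eq:LieCG}.

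The first step is to show that $\fD(\CG)_{\one} \subseteq \g$. By Lemmas \ref{lemma:A1-A6-E7-E6} and \ref{lemma:D(gzero)}, $\fD(\gzero) \cong \bigoplus_{V \in \Irr(\Gzero)} \fsl(V)$. From \eqref{eq:D(LieCG)}, the odd part $\fD(\CG)_{\one}$ decomposes as
\begin{equation*}
\Big[\bigoplus_{\lambda \in \calE(G)/\sim} \fsl(S^\lambda)\Big] \oplus \Big[\bigoplus_{\lambda\in\calF(G)} \Hom(S^{\lambda^+},S^{\lambda^-}) \oplus \Hom(S^{\lambda^-},S^{\lambda^+})\Big].
\end{equation*}
Each of these summands is a simple $\fD(\gzero)$-module, and they are pairwise non-isomorphic, because each is acted on nontrivially by a different simple ideal or pair of simple ideals. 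Indeed, $\Hom(S^{\lambda^+},S^{\lambda^-})$ and $\Hom(S^{\lambda^-},S^{\lambda^+})$ are dual to each other, and they are non-isomorphic once $\dim S^{\lambda^\pm} > 1$. The $1$-dimensional $\Gzero$-modules only contribute to the $\bbW$-component, which has no odd part in $\sq$. Hence $\gone$, being a $\fD(\gzero)$-submodule of $\fD(\CG)_{\one} \oplus \C\calC_\ell$, is a sum of some of these summands (plus possibly something involving $\calC_\ell$). Lemma \ref{lemma:W(gn)} shows that each summand lies in the projection $\rho_W(\g)$ for the corresponding $W$. Since the isotypic components are distinct, each summand must actually lie in $\gone$.

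The second step is to pass from the odd part to all of $\fD(\CG)$ using Lemma \ref{lem:generated-by-odd}. We inspect the character table of $W(E_7)$: it has no $2$-dimensional irreducibles, and $\dim(\Lie(\CG))$ suggests that the smallest nonlinear irreducibles have dimension $7$. So every Type M supermodule satisfies $\dim W_{\zero} \geq 2$, and every Type Q supermodule other than $\bbW = \C_{\triv} \oplus \ve$ satisfies $\dim W_{\zero} \geq 3$. By Lemma \ref{lem:generated-by-odd}, the odd parts already obtained then generate $\fsl(W)$, respectively $\sq(W)$, inside $\g$. Here I must double-check that the Type M pieces are balanced, i.e.\ $\dim S^{\lambda^+} = \dim S^{\lambda^-}$. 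This holds because the two pieces are conjugate. Thus each piece is $\fsl(m|m)$ with $m \geq 2$, which falls under Lemma \ref{lem:generated-by-odd}.

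The final step handles $\sq(\bbW) = \C\,\id_{\bbW}$. We have $1_{\CG} = \tfrac12[\tau,\tau] \in \g$ for any reflection $\tau$, and each $\id_W$ for $\dim W > 2$ lies in $\g$, as it belongs to $\fsl(W)$ or $\sq(W)$ (for Type Q, $\id_W \in \sq(W)$ since the trace condition concerns the odd block). Hence $\id_{\bbW} = 1_{\CG} - \sum_{\dim W>2} \id_W \in \g$. The main obstacle is really Lemma \ref{lemma:A1-A6-E7-E6} itself, in particular \ref{item:D(gzero)-to-sl(V)}, which requires the branching from $W(E_7)$ to $W(E_6)$ together with Lemma \ref{lemma:simple-ideal-criterion} and \cite[Lemme 13]{Marin:2007}. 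Granting it as stated, the only delicate point in the theorem is verifying the dimension bounds from the $E_7$ character table.
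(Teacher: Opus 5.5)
Your proposal is correct and takes the paper's approach: the paper proves this theorem only by noting that it is parallel to type $E_6$, i.e., by running the argument of Theorem~\ref{thm:D(E6)-LSA} with Lemma~\ref{lemma:A1-A6-E7-E6} in place of Lemma~\ref{lemma:A1-A6-E6-D5}, which is what you do. Three minor corrections: your Type M discussion is vacuous here, since the central element $w_0=-1$ has odd length $63$, so $V\not\cong V^\ve$ for every $V\in\Irr(G)$ and every simple supermodule is of Type Q; this also makes harmless your claim that $\Hom(S^{\lambda^+},S^{\lambda^-})\not\cong\Hom(S^{\lambda^-},S^{\lambda^+})$ once $\dim S^{\lambda^\pm}>1$, which in general needs $\dim S^{\lambda^\pm}\geq 3$ because the natural $\fsl(2)$-module is self-dual; and the bound $\dim W_{\zero}\geq 7$ for $W\neq\bbW$ must be read off from the character degrees of $W(E_7)$, not inferred from $\dim\Lie(\CG)$.
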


\begin{proof}
The proof is entirely parallel to the argument given previously for type $E_6$.
\end{proof}

\subsection{Type \texorpdfstring{$E_8$}{E8}}

Let $G = W(E_8)$ be the reflection group of type $E_8$. Let $S_0 = \set{s_1,\ldots,s_7,s_8}$ be a set of Coxeter generators for $G$ such that the corresponding Coxeter graph is labeled as in \cite[Theorem 1.3.3]{Geck:2000}, and let $H \cong W(E_7)$ be the subgroup of $G$ generated by the set $\set{s_1,\ldots,s_7}$. Let $S_H$ be the set of reflections in $H$.

\begin{lemma} \label{lemma:A1-A6-E8-E7}
With notation as in the previous paragraph, conditions \ref{item:S-union-conj-classes}--\ref{item:no-G-simple-dim-2} are satisfied for the group $G$ relative to the subgroup $H$.
\end{lemma}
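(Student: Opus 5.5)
I will follow the template of Lemma \ref{lemma:A1-A6-E6-D5}, now with $G = W(E_8)$ and $H = W(E_7)$.

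\textbf{The formal conditions.} Condition \ref{item:S-union-conj-classes} holds because $G$ is a reflection group. Conditions \ref{item:gzero-generates-CGzero} and \ref{item:S'-one-class-gen-G0} follow from Lemma \ref{lemma:S'-generator}, whose proof covers type $E_8$. Condition \ref{item:no-G-simple-dim-2} holds by inspection of the character degrees of $W(E_8)$: the smallest nonlinear degree is $8$.

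\textbf{Condition \ref{item:inductive-subgroup} for $H = W(E_7)$.} For part (a), $H$ is a reflection group and Lemma \ref{lemma:S'-generator} applies to $E_7$. For part (b), I would use Lemma \ref{lemma:A1-A6-E7-E6}, which already gives \ref{item:S-union-conj-classes}--\ref{item:no-G-simple-dim-2} for $H$ relative to $W(E_6)$. Then Lemma \ref{lemma:D(gzero)} yields $\fD(\hzero) \cong \bigoplus_{V\in\Irr(\Hzero)} \fsl(V)$. Combining this with Theorem \ref{thm:D(E7)-LSA} ($\fh = \fD(\CH) + (\CSrm)^H$) and the first paragraph of the proof of \cite[Lemma 2.5.2]{DK:2026b} gives $\hzero = \fD(\CH)_{\zero}$. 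For part (c), $W(E_7)$ has no $2$-dimensional simple modules, and by Lemma \ref{lemma:classical-Clifford}, restricting a self-associate simple $\CH$-module of dimension $4$ would be the only other source. The table of $E_7$ degrees shows none exists; $F(E_7)$ is even empty. For part (d), I would compute $\Res^G_H$ of each $V \in \Irr(G)$ in GAP and apply Remark \ref{rem:even-subgroup-branching}. This shows each nonlinear simple $\CGzero$-module contains a $\CHzero$-factor of dimension $>1$.

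\textbf{Condition \ref{item:D(gzero)-to-sl(V)}.} This is the substantive step. Fix $V \in \Irr(\Gzero)$ with $\dim V > 1$, and set $\g_V = \rho_V(\fD(\gzero)) \supseteq \fh_V = \rho_V(\fD(\hzero)) \cong \bigoplus_{W \leq V} \fsl(W)$, as in \eqref{eq:hV}. By Proposition \ref{prop:g0-reductive} and Lemma \ref{lemma:simple-G0-g0-D(g0)}, $\g_V$ is semisimple and acts irreducibly on $V$. Using GAP branching data from $E_8$ to $E_7$ together with Remark \ref{rem:even-subgroup-branching}, I would do two things. First, exhibit multiplicity-one $\CHzero$-factors $W_1,\ldots,W_s$ with $(\sum \dim W_j)^2 > \dim V$; Lemma \ref{lemma:simple-ideal-criterion} then gives simplicity of $\g_V$. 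Second, check $2\,\rk(\fh_V) > \dim V$, so that \cite[Lemme 13]{Marin:2007} gives $\g_V = \fsl(V)$.

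\textbf{Main obstacle.} As Theorem \ref{thm:D(E8)-LA} warns, the restriction from $E_8$ to $E_7$ is not multiplicity free. For some large $V$, and especially the halves $S^{\lambda^\pm}$ of self-associate modules (dimensions like $4096$, $7168$ halved), the multiplicity-one factors may fail the inequality in Lemma \ref{lemma:simple-ideal-criterion}. If that happens, I would argue as in Lemma \ref{lemma:V34}. A nontrivial tensor decomposition $V = V_1 \boxtimes V_2$ forces each factor $\fsl(W)$ occurring with multiplicity less than $\dim V_i$ to act nontrivially on each $V_i$. That gives lower bounds on $\dim V_i$ from all of $\fh_V$, contradicting $\dim V_1 \cdot \dim V_2 = \dim V$. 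The rank bound should be comfortable, since $\fh_V$ has many $\fsl$ summands.
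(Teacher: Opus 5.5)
Your proposal is correct and follows the paper's own route. The paper proves this lemma by saying it is ``entirely parallel'' to the $E_6$ and $E_7$ cases: the formal conditions come from Lemma \ref{lemma:S'-generator} and inspection, part (b) of \ref{item:inductive-subgroup} comes from Theorem \ref{thm:D(E7)-LSA} (since $(\CSrm)^H$ is purely odd, this gives $\hzero = \fD(\CH)_{\zero}$ directly), and \ref{item:D(gzero)-to-sl(V)} comes from GAP branching data, Lemma \ref{lemma:simple-ideal-criterion}, and \cite[Lemme 13]{Marin:2007}. One harmless slip in your speculative aside: the $4096$-dimensional simple $W(E_8)$-modules come in pairs interchanged by $\ve$, so unlike the unique $7168$-dimensional one they are not self-associate.
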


\begin{proof}
The proof is entirely parallel to the arguments given previously for types $E_6$ and $E_7$.
\end{proof}

\begin{theorem} \label{thm:D(E8)-LSA}
For the reflection group $G = W(E_8)$ of type $E_8$, the Lie superalgebra inclusion \eqref{eq:LSA-inclusion} is an equality, i.e., $\g = \fD(\CG) + (\CSrm)^G = \fD(\CG) + \Span\smallset{\calC_\ell}$.
\end{theorem}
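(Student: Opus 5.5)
The plan is to follow the $E_6$ argument (Theorem \ref{thm:D(E6)-LSA}) verbatim, using Lemma \ref{lemma:A1-A6-E8-E7} in place of Lemma \ref{lemma:A1-A6-E6-D5}.

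First, Lemma \ref{lemma:g-in-D(g)+span} gives $\g \subseteq \fD(\CG) + \C\cdot\calC_\ell$, since $W(E_8)$ has a single class of reflections. Moreover $\calC_\ell \in \g$, being the sum of the generators. So it suffices to prove $\fD(\CG) \subseteq \g$. Identify $\CG$ with its image under \eqref{eq:LieCG}.

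\textbf{Step 1: the odd part.} By Lemmas \ref{lemma:A1-A6-E8-E7} and \ref{lemma:D(gzero)}, we have $\fD(\gzero) \cong \bigoplus_{V \in \Irr(\Gzero)} \fsl(V)$. On the other hand, \eqref{eq:D(LieCG)} decomposes $\fD(\CG)_{\one}$ as
\[
\bigoplus_{\lambda \in \calE(G)/\sim}\fsl(S^\lambda) \;\oplus\; \bigoplus_{\lambda \in \calF(G)}\Bigl(\Hom(S^{\lambda^+},S^{\lambda^-})\oplus\Hom(S^{\lambda^-},S^{\lambda^+})\Bigr).
\]
Each summand here is a nonzero simple $\fD(\gzero)$-module, and distinct summands are pairwise non-isomorphic because distinct simple ideals of $\fD(\gzero)$ act on them. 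For $\lambda\in\calF(G)$, the two Hom summands are non-isomorphic since $S^{\lambda^+}\not\cong S^{\lambda^-}$. The trivial one-dimensional summands of $\fsl(S^\lambda)$ vanish, so they cause no trouble.

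Since $\gone$ is a $\fD(\gzero)$-submodule of $\fD(\CG)_{\one} \oplus \C\calC_\ell$, it is a sum of some of these isotypic pieces together with possibly a piece of the trivial part. Lemma \ref{lemma:W(gn)} shows that the projection $\rho_W(\g)$ contains $\sq(W)_{\one}$, respectively $\fsl(W)_{\one}$. Hence every such summand occurs in $\gone$, and so $\fD(\CG)_{\one}\subseteq \g$.

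\textbf{Step 2: the even part from odd generation.} Using the dimension table for $W(E_8)$, check two facts:
\begin{itemize}
\item every Type M supermodule has $\dim W \ge 4$;
\item every Type Q supermodule other than $\bbW = \C_{\triv}\oplus\ve$ has $\dim W_{\zero} \ge 3$.
\end{itemize}
Indeed, the smallest nonlinear simple module has dimension $8$, and by \ref{item:no-G-simple-dim-2} there is nothing of dimension $2$. Lemma \ref{lem:generated-by-odd} then gives $\fsl(W)\subseteq\g$ and $\sq(W)\subseteq\g$ for all these $W$.

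\textbf{Step 3: the remaining summand $\bbW$.} Here $\sq(\bbW)=\C\,\id_{\bbW}$, and we use the same trick as for $E_6$. We have $1=\tfrac12[\tau,\tau]\in\g$ for any reflection $\tau$. Each $\id_W$ with $W\neq\bbW$ lies in $\fsl(W)$ (as $\dim W_{\zero}=\dim W_{\one}$ for Type M) or in $\sq(W)$, so it already lies in $\g$. Therefore
\[
\id_{\bbW} \;=\; 1-\sum_{W\ne\bbW}\id_W \;\in\; \g .
\]

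\textbf{Main obstacle.} This lies entirely inside Lemma \ref{lemma:A1-A6-E8-E7}, assumed here: specifically, verifying \ref{item:D(gzero)-to-sl(V)} for $E_8$ via restriction to $W(E_7)$, where multiplicity-two factors occur. Within the theorem itself, the only care needed is checking the dimension bounds from the character table in Step 2.
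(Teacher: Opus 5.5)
Your proposal is correct and follows the paper's own route: the paper proves the $E_8$ case only by saying it is parallel to $E_6$ and $E_7$, and you have written out that $E_6$ argument, with Lemma~\ref{lemma:A1-A6-E8-E7} supplying \ref{item:S-union-conj-classes}--\ref{item:no-G-simple-dim-2}. One small correction in Step 1: for $\lambda \in \calF(G)$, the summands $\Hom(S^{\lambda^+},S^{\lambda^-})$ and $\Hom(S^{\lambda^-},S^{\lambda^+})$ are non-isomorphic not merely because $S^{\lambda^+}\not\cong S^{\lambda^-}$ (that only shows two distinct simple ideals act), but because $\dim S^{\lambda^\pm} \geq 4 \geq 3$, so the natural $\fsl(S^{\lambda^\pm})$-modules are not self-dual; if $\dim S^{\lambda^\pm}$ were $2$, the two summands would be isomorphic.
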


\begin{proof}
The proof is entirely parallel to the arguments given previously for types $E_6$ and $E_7$.
\end{proof}

\appendix

\section{Information on irreducible representations} \label{sec:info-irreps}

Let $G$ be a finite reflection group as in Section \ref{subsec:generalities}, with sign representation $\ve$. Set $V^\ve = V \otimes \ve$, and write $\C_{\triv}$ for the one-dimensional trivial $G$-module.

\subsection{Type \texorpdfstring{$H_3$}{H3}}

Let $G = W(H_3)$ be the Coxeter group of type $H_3$. A list of the simple $\CG$-modules and their dimensions is given in Table \ref{tab:H3}; cf.\ \cite[Table C.1]{Geck:2000}.

\begin{table}[htbp]
\caption{Type $H_3$} \label{tab:H3}
\begin{tabular}{llc}
\hline
$U$			& $U^\ve$	& $\dim$ \\
\hline
$U_1 = \C_{\triv}$		& $U_2 = \ve$		& 1 \\
\hline
$U_3 = \Lambda^2(U_5)$		& $U_5 = U_{\refl}$		& 3 \\
\hline
$U_4$		& $U_6$		& 3 \\
\hline
$U_7$		& $U_8$		& 4 \\
\hline
$U_9$		& $U_{10}$	& 5 \\
\hline
\end{tabular}
\end{table}

\subsection{Type \texorpdfstring{$H_4$}{H4}}

Let $G = W(H_4)$ be the Coxeter group of type $H_4$. A list of the simple $\CG$-modules and their dimensions is given in Table \ref{tab:H4}; cf.\ \cite[Table C.2]{Geck:2000}. Recall for type $H_4$ that each simple $\CG$-module can be realized over $\Q(\sqrt{5})$; see \cite[5.3.8]{Geck:2000}. Let $\sigma : \C \to \C$ be a field automorphism lifting the nontrivial automorphism of $\Q(\sqrt{5})$. Given a $\CG$-module $V$, let ${}^\sigma V = \C \otimes_{\sigma} V$ be the $\CG$-module obtained from $V$ by `scalar extension' along $\sigma$. Writing ${}^\sigma v = 1 \otimes_\sigma v$ for $v \in V$, one has $g.({}^\sigma v) = {}^\sigma(g.v)$ for $g \in G$, and $\lambda \cdot {}^\sigma v = {}^\sigma( \sigma^{-1}(\lambda) \cdot v)$ for $\lambda \in \C$.

\begin{table}[htbp]
\caption{Type $H_4$} \label{tab:H4}
\begin{tabular}[t]{llc}
\hline
$V$			& $V^\ve$	& $\dim$ \\
\hline
$V_1 = \C_{\triv}$		& $V_2 =\ve$		& 1 \\
\hline
$V_3 = V_{\refl}$		& $V_5 = \Lambda^3(V_3)$		& 4 \\
\hline
$V_4 = {}^\sigma V_{\refl}$		& $V_6 = \Lambda^3(V_4)$		& 4 \\
\hline
$V_7 = \Lambda^2(V_4)$		& $V_7$		& 6 \\
\hline
$V_8 = \Lambda^2(V_3)$		& $V_8$		& 6 \\
\hline
$V_9$		& $V_9$		& 8 \\
\hline
$V_{10}$	& $V_{10}$	& 8 \\
\hline
$V_{11}$	& $V_{13}$	& 9 \\
\hline
$V_{12}$	& $V_{14}$	& 9 \\
\hline
\end{tabular} \qquad
\begin{tabular}[t]{llc}
\hline
$V$			& $V^\ve$	& $\dim$ \\
\hline
$V_{15}$	& $V_{15}$	& 10 \\
\hline
$V_{16}$	& $V_{17}$	& 16 \\
\hline
$V_{18}$	& $V_{19}$	& 16 \\
\hline
$V_{20}$	& $V_{20}$	& 16 \\
\hline
$V_{21}$	& $V_{21}$	& 16 \\
\hline
$V_{22}$	& $V_{22}$	& 18 \\
\hline
$V_{23}$	& $V_{23}$	& 24 \\
\hline
$V_{24}$	& $V_{24}$	& 24 \\
\hline
$V_{25}$	& $V_{25}$	& 24 \\
\hline
\end{tabular} \qquad
\begin{tabular}[t]{llc}
\hline
$V$			& $V^\ve$	& $\dim$ \\
\hline
$V_{26}$	& $V_{26}$	& 24 \\
\hline
$V_{27}$	& $V_{28}$	& 25 \\
\hline
$V_{29}$	& $V_{29}$	& 30 \\
\hline
$V_{30}$	& $V_{30}$	& 30 \\
\hline
$V_{31}$	& $V_{32}$	& 36 \\
\hline
$V_{33}$	& $V_{33}$	& 40 \\
\hline
$V_{34}$	& $V_{34}$	& 48 \\
\hline
\end{tabular}
\end{table}

\subsection{Type \texorpdfstring{$F_4$}{F4}}

Let $G = W(F_4)$ be the Coxeter group of type $F_4$. A list of the simple $\CG$-modules and their dimensions is given in Table \ref{tab:F4}; cf.\ \cite[Table C.3]{Geck:2000}.

\begin{table}[htbp]
\caption{Type $F_4$} \label{tab:F4}
\begin{tabular}[t]{llc}
\hline
$V$			& $V^\ve$	& $\dim$ \\
\hline
$V_1 = \C_{\triv}$		& $V_2 = \ve$		& 1 \\
\hline
$V_3$		& $V_4$		& 1 \\
\hline
$V_5$		& $V_6$		& 2 \\
\hline
$V_7$		& $V_8$		& 2 \\
\hline
$V_9 = V_{\refl}$		& $V_{12} = \Lambda^3(V_9)$	& 4 \\
\hline
$V_{10}$	& $V_{11}$	& 4 \\
\hline
$V_{13}$	& $V_{13}$	& 4 \\
\hline
$V_{14} = \Lambda^2(V_9)$	& $V_{14}$	& 6 \\
\hline
\end{tabular} \qquad
\begin{tabular}[t]{llc}
\hline
$V$			& $V^\ve$	& $\dim$ \\
\hline
$V_{15}$	& $V_{15}$	& 6 \\
\hline
$V_{16}$	& $V_{17}$	& 8 \\
\hline
$V_{18}$	& $V_{19}$	& 8 \\
\hline
$V_{20}$	& $V_{23}$	& 9 \\
\hline
$V_{21}$	& $V_{22}$	& 9 \\
\hline
$V_{24}$	& $V_{24}$	& 12 \\
\hline
$V_{25}$	& $V_{25}$	& 16 \\
\hline
\end{tabular}
\end{table}

\subsection{Type \texorpdfstring{$D_5$}{D5}}

Let $G = W(D_5)$ be the Coxeter group of type $D_5$. A list of the simple $\CG$-modules and their dimensions is given in Table \ref{tab:D5}; the notation follows that in \cite[\S3.2]{DK:2026a}.

\begin{table}[htbp]
\caption{Type $D_5$} \label{tab:D5}
\begin{tabular}{llc}
\hline
$U$			& $U^\ve$	& $\dim$ \\
\hline
$U_1 = S^{\{[5],\emptyset \}} = \C_{\triv}$	& $U_2 = S^{\{\emptyset,[1^5] \}}$	& 1 \\
\hline
$U_3 = S^{\{[41],\emptyset \}}$				& $U_4 = S^{\{\emptyset,[21^3] \}} = \Lambda^3(U_3)$	& 4 \\
\hline
$U_5 = S^{\{[32],\emptyset \}}$				& $U_6 = S^{\{\emptyset,[2^21] \}}$	& 5 \\
\hline
$U_7 = S^{\{[4],[1] \}} = U_{\refl}$		& $U_8 = S^{\{[1],[1^4] \}} = \Lambda^4(U_7)$	& 5 \\
\hline
$U_9 = S^{\{[31^2],\emptyset \}} = \Lambda^2(U_3)$	& $U_9$			& 6 \\
\hline
$U_{10} = S^{\{[3],[2] \}}$					& $U_{12} = S^{\{[1^2],[1^3] \}}$			& 10 \\
\hline
$U_{11} = S^{\{[3],[1^2] \}} = \Lambda^3(U_7)$			& $U_{13} = S^{\{[2],[1^3] \}} = \Lambda^2(U_7)$			& 10 \\
\hline
$U_{14} = S^{\{[2^2],[1] \}}$				& $U_{14}$			& 10 \\
\hline
$U_{15} = S^{\{[31],[1] \}}$				& $U_{16} = S^{\{[1],[21^2] \}}$			& 15 \\
\hline
$U_{17} = S^{\{[21],[2] \}}$					& $U_{18} = S^{\{[1^2],[21] \}}$			& 20 \\
\hline
\end{tabular}
\end{table}

\makeatletter
\renewcommand*{\@biblabel}[1]{\hfill#1.}
\makeatother

\bibliographystyle{eprintamsplain}
\bibliography{LA-LSA-reflections-exceptional-types}

\end{document}